\newcommand{\be}{\begin{eqnarray}}
	\newcommand{\ee}{\end{eqnarray}}
\newcommand{\beq}{\begin{equation}}
	\newcommand{\eeq}{\end{equation}}
\newcommand{\ben}{\begin{eqnarray*}}
	\newcommand{\een}{\end{eqnarray*}}
\newcommand{\dis}{\displaystyle}
\newtheorem{theorem}{Theorem}
\newtheorem{lemma}[theorem]{Lemma}
\newtheorem{proposition}[theorem]{Proposition}
\global\let\AddToReset=\@addtoreset
	\newcommand{\RR}{\mathbb R}
\begin{document}
\title[Multiple existence of ground states]
      {Multiplicity results for ground state solutions of
            a semilinear equation via abrupt changes in magnitude of the nonlinearity   }\thanks{This research was supported by
        FONDECYT- 1190102 for the first author,
        FONDECYT-1210241  for the second author and FONDECYT-1170665 for the  third author.}
\author{Carmen Cort\'azar}
\address{Departamento de Matem\'atica, Pontificia
        Universidad Cat\'olica de Chile,
        Casilla 306, Correo 22,
        Santiago, Chile.}
\email{\tt ccortaza@mat.puc.cl}
\author{Marta Garc\'{\i}a-Huidobro}
\address{Departamento de Matem\'atica, Pontificia
        Universidad Cat\'olica de Chile,
        Casilla 306, Correo 22,
        Santiago, Chile.}
\email{\tt mgarcia@mat.puc.cl}
\author{Pilar Herreros}
\address{Departamento de Matem\'atica, Pontificia
        Universidad Cat\'olica de Chile,
        Casilla 306, Correo 22,
        Santiago, Chile.}
\email{\tt pherrero@mat.puc.cl}

%%%%%%%%%%%%%%%%%%%%%%%%%%%%%%%%%%%%%%%%%%%%%%%%%%%%%%%%%%%%%%%%%%%%%%%%

\begin{abstract}
Given $k\in\mathbb N$, we define a class of continuous piecewise functions  $f$ having abrupt but controlled magnitude changes so that the problem
 $$ \Delta u
+f(u)=0,\quad x\in \RR^N, N> 2, $$
 has at least $k$ radially symmetric ground state solutions.

\end{abstract}

\maketitle

\section{Introduction and main results}

In this paper we define a class of  continuous nonlinearities $f$ so that the problem
\begin{eqnarray}\label{pde}
\begin{gathered}
 \Delta u
+f(u)=0,\quad x\in \RR^N, N> 2, \\
 \lim\limits_{|x|\to\infty}u(x)=0,
\end{gathered}
\end{eqnarray}
 has multiple positive solutions. To this end we consider the radial version of \eqref{pde}, that is
\begin{eqnarray}\label{eq2}
\begin{gathered}
u''+\frac{N-1}{r}u'+f(u)=0,\quad u(r)>0\mbox{ for all } r>0,\quad N> 2, \\
u'(0)=0,\quad \lim\limits_{r\to\infty}u(r)=0,
\end{gathered}
\end{eqnarray}
where all throughout this article  $u'$ denotes differentiation of $u$ with respect to $r$.

Any nonconstant solution to \eqref{pde} is called a bound state solution. Bound state solutions such that $u(x)>0$ for all $x\in\mathbb R^N$ are referred to as a first bound state solution, or  a ground state solution.

The uniqueness problem for positive solutions to problem \eqref{pde} has been extensively studied during the past decades, see for example  \cite{fls, ms, pel-ser2, pu-ser, st}.

The multiplicity problem has been studied for the following non-autonomous problem
$$-\Delta u=f(x,u),\qquad u(x)\to0\quad\mbox{as $|x|\to\infty$}$$
for  $f$ of the form $f(x,u)=g(x,u)-a(x)u$ by \cite{at1, at2, aw,  cmp, cps1, cao, dwy, hl,  wy, mp21}. Under different assumptions on the nonnegative function $g$ and the coefficient $a$, they have established existence of multiple ground state solutions. More recently, Cerami and Molle \cite{cm19}, considered $f(x,u)=u^p-a(x)u-b(x)u^q$, $q<p<\frac{N+2}{N-2}$, introducing the nonzero term $b(x)$ and gave conditions to obtain infinitely many ground states. The autonomous case was studied in \cite{ddg} for $f(u)=-u+u^p+\lambda u^q$ with $N=3$, $1<q<3$, $p$ near $5$, where they prove that there if $\lambda$ is large enough, then there exist at least three radial ground state solutions to this problem. We also mention the work by Wei and Wu, \cite{wei-wu}, where the authors consider the nonlinearity $f(u)=|u|^{2^*-2}u+\lambda u+\mu |u|^{q-2}u$, where $2^*=\frac{2N}{N-2}$ is the well known critical exponent and among other results, they prove that if $N=3$, $2<q<10/3$, under some conditions in $\mu>0$ the problem has a second ground state for some $\lambda<0$.

\bigskip

In \cite{cghh2} we established a multiplicity result for bound states by considering  nonlinearities behaving like $u^p-u$ at the start but  having multiple sign changes so that its primitive $F(s)=\int_0^sf(t)dt$ would have positive local  maxima. In the present  work, we will define $f$ piecewise, starting with a function $f_1$  satisfying assumptions $(H_1)$-$(H_4)$ below so that problem \eqref{eq2} with $f=f_1$ has a unique  ground state solution, and then having abrupt but controlled \lq\lq magnitude\rq\rq\  changes.
Therefore our  results will consider a nonlinearity of the form
$$f=f_1\chi_{[0,\alpha_*+\epsilon_1]}+\sum_{i=2}^kL_{i-1}
\chi_{[\alpha_{i-1},\alpha_{i-1}+\epsilon_{i-1}]}+
\sum_{i=2}^{k-1}A_{i}^2f_{i}\chi_{[\alpha_{i-1}+\epsilon_{i-1},\alpha_{i}]}+
A_{k}^2f_{k}\chi_{[\alpha_{k-1}+\epsilon_{k-1},\gamma)},$$
where $k\in\mathbb N$, $\chi_E$ denotes the characteristic function of the set $E$, $L_{i-1}$ is a linear function defined so that $f$ is continuous, $\epsilon_i$ and $A_i$ are positive constants  and the functions $f_i$, $i\ge 2$, are {\bf any} positive continuous  functions defined in $[\alpha_*,\gamma)$, see Figure \ref{fig1}.

\begin{figure}[h]
  \includegraphics[scale=1]{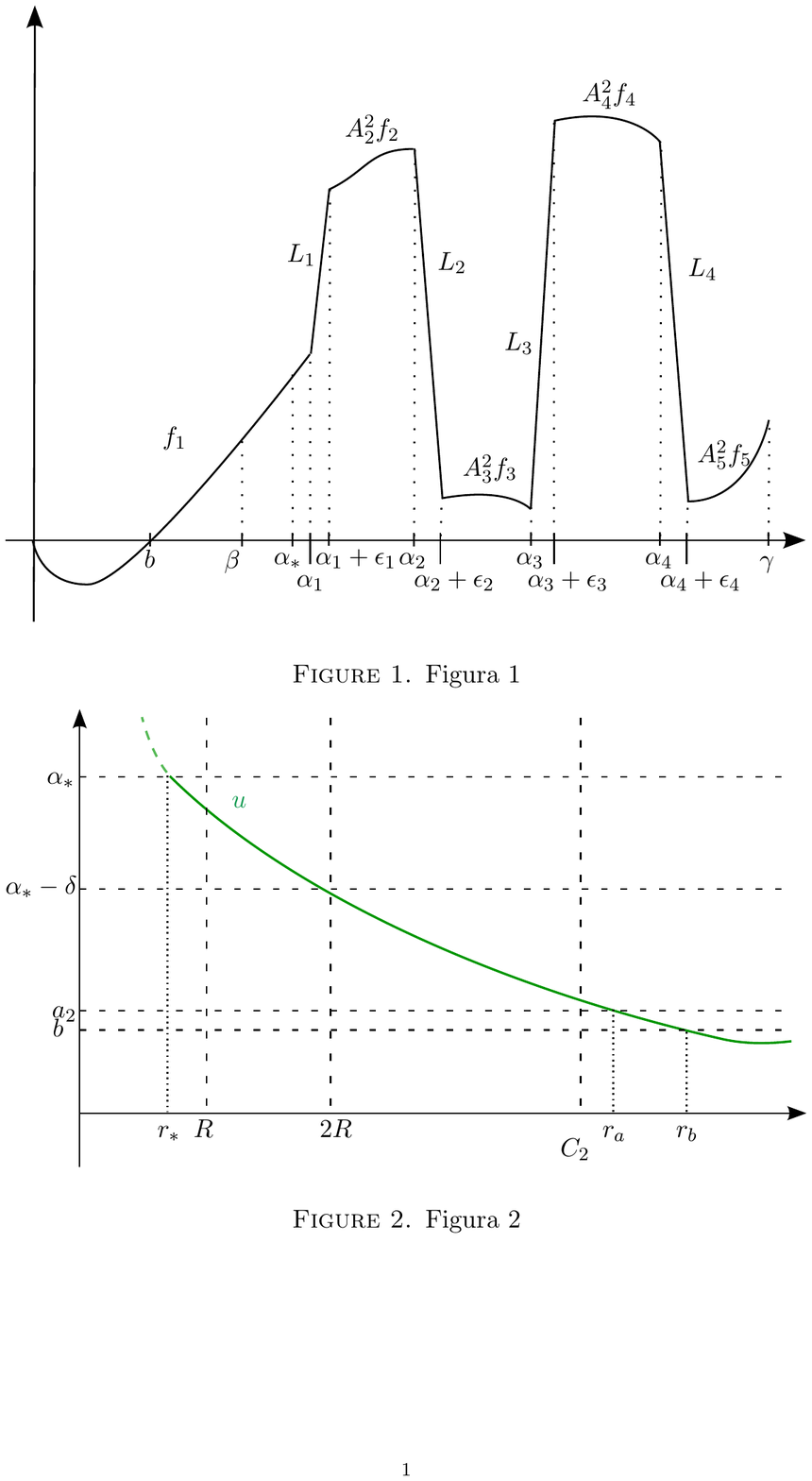}
  \caption{A function $f$ for $k=5$}\label{fig1}
\end{figure}
\medskip

We start with the case $k=2$, so that
 \begin{eqnarray}\label{f}
f(s)=\begin{cases}
f_1(s) &  s\leq \alpha_*+\epsilon_1=\alpha_1\\
L_1(s) & \alpha_1 \leq s\leq \alpha_1+\epsilon_1\\
A_2^2f_2(s) &  s\geq \alpha_1+\epsilon_1,
\end{cases}
\end{eqnarray}
where $L_1(s)$ is the line from $(\alpha_1, f_1(\alpha_1))$ to $(\alpha_1+\epsilon_1, A^2f_2(\alpha_1+\epsilon_1))$ and  $\alpha_*$ is given by $(H_4)$. The constants  $\epsilon_1$ and $A_2$ will be determined.

 The continuity assumption on $f$ is crucial to guarantee continuous dependence of the solutions on initial conditions. We have chosen the transition functions $L_i$ to be linear for simplicity, and it could be avoided at the cost of imposing that $f_2$ be monotone nondecreasing.

We will assume the following conditions on the nonlinearity $f$:

\begin{enumerate}
 \item[$(H_1)$]  $f_1\in C[0,\infty) \cap C^1(0,\infty)$, $f_1(0)=0$ and there exist $b\ge 0$ such that $f_1(s)>0$ for $s>b$,
	      $f_1(s)\le 0$ for $s\in[0,b]$ and moreover
	      $f_1(s)<0$ on $(0,\epsilon)$ for some $\epsilon>0$;
also, by setting
$ F_1(s) = \int_0^s f_1(t) dt,
$
we assume that there exists a unique finite $\beta\ge b$ such that $F(\beta)=0$.
\end{enumerate}

\begin{enumerate}
 \item[$(H_2)$] $(F_1/f_1)'(s) > (N-2)/(2N)$ for all $s>\beta$;
 \item[$(H_3)$] $\displaystyle f_1/(s-b)$ is increasing for all $s>b$.
\item[$(H_4)$] There is an initial condition $\alpha_*$ such that the problem
\begin{eqnarray}\label{pdef1}
\begin{gathered}
u''+\frac{N-1}{r}u'+f_1(u)=0,\quad r>0,\quad N> 2,\\
u(0)=\alpha_*,\quad u'(0)=0,
\end{gathered}
\end{eqnarray}
is a ground state solution.

\item[$(H_5)$] $f_i$ is a positive continuous function defined on $[\alpha_*,\gamma)$ for some $\alpha_*<\gamma\le\infty$ for all $i\ge 2$.

\end{enumerate}

\medskip
Our first result is the following.
\begin{theorem}\label{second solution}
Assume that $f_1$, $\alpha_*$ and $f_2$ satisfy the assumptions above.
Then, there exist positive constants $\bar\epsilon$ and  $\bar A$ such that for any $0<\epsilon_1<\bar\epsilon$ and $A_2>\bar A$, problem \eqref{pde} with $f$ given by \eqref{f} has at least two ground state solutions.
\end{theorem}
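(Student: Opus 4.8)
The plan is to keep the known ground state of the $f_1$-problem as the first solution and to manufacture a second one by a shooting argument in the amplitude $\alpha=u(0)$, exploiting the fact that a large $A_2$ forces the profile to traverse the region $\{u\ge\alpha_1\}$ almost instantly and then, for $r$ bounded away from $0$, to evolve essentially as an $f_1$-solution of a smaller, tunable amplitude. The first solution is immediate: since $\alpha_*<\alpha_1$ and, by the structure imposed in $(H_1)$--$(H_4)$, the $f_1$-ground state $U$ furnished by $(H_4)$ is strictly decreasing, one has $0<U(r)\le\alpha_*<\alpha_1$ for all $r$, so $f$ agrees with $f_1$ along $U$ and $U$ solves \eqref{eq2} for the piecewise $f$. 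For the second solution I would write $u(\cdot;\alpha)$ for the solution with $u(0)=\alpha$, $u'(0)=0$, and partition the amplitudes $\alpha\ge\alpha_1$ into the sets $\mathcal I_-=\{\alpha:\ u(\cdot;\alpha)\text{ has a first zero}\}$ and $\mathcal I_+=\{\alpha:\ u(\cdot;\alpha)>0\text{ for all }r\text{ with }\liminf u>0\}$. Continuity of $f$ yields continuous dependence on $\alpha$, so both sets are open; the classical shooting dichotomy then gives that any amplitude in $[\alpha_1,\gamma)\setminus(\mathcal I_-\cup\mathcal I_+)$ produces a ground state, and this complement is nonempty on the connected range $[\alpha_1,\gamma)$ once both $\mathcal I_\pm$ are nonempty. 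Because any such separating amplitude exceeds $\alpha_1>\alpha_*$, the resulting profile has $\max u>\alpha_1$ and is genuinely distinct from $U$.

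The heart of the argument is to show $\mathcal I_-$ and $\mathcal I_+$ are both nonempty for $A_2$ large and $\epsilon_1$ small. For $\alpha=\alpha_1$ this is free: $u(\cdot;\alpha_1)$ is exactly the $f_1$-solution $U_{\alpha_1}$ of amplitude $\alpha_1>\alpha_*$, which crosses zero by the known classification for the $f_1$-problem (amplitudes above $\alpha_*$ give crossing solutions, amplitudes in $(\beta,\alpha_*)$ give positive ones), so $\alpha_1\in\mathcal I_-$. For large $\alpha$ I would use the rescaling $v(\sigma)=u(\sigma/A_2)$: on $\{u\ge\alpha_1\}$ the equation becomes $v''+\frac{N-1}{\sigma}v'+A_2^{-2}f(v)=0$ with $A_2^{-2}f$ of order one and strictly positive, so $v$ descends monotonically from $\alpha$ to $\alpha_1$ over a bounded rescaled radius $\sigma_0(\alpha)$ with bounded speed $w_0(\alpha)$; once $v<\alpha_1$ the rescaled nonlinearity is $f_1/A_2^2\to0$ and $v$ obeys the free radial equation $v''+\frac{N-1}{\sigma}v'\approx0$, whose bounded solutions tend to a constant. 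Matching the two regimes shows that, across a layer of radius $O(1/A_2)$, the profile drops to the effective amplitude $c_1(\alpha)=\alpha_1-\frac{w_0(\alpha)\,\sigma_0(\alpha)}{N-2}$ essentially at rest, and that $u(\cdot;\alpha)\to U_{c_1(\alpha)}$ uniformly on compact sets as $A_2\to\infty$. Since $c_1(\alpha)$ is continuous and decreasing with $c_1(\alpha_1)=\alpha_1>\alpha_*$ and $c_1(\alpha)<\alpha_*$ for $\alpha$ large, I can choose $\alpha_+$ with $\beta<c_1(\alpha_+)<\alpha_*$; as $U_{c_1(\alpha_+)}$ stays positive and bounded away from $0$, the convergence forces $\alpha_+\in\mathcal I_+$ for $A_2$ large.

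The main obstacle is precisely this boundary-layer and matching step: one must prove rigorously that the large velocity created in $\{u\ge\alpha_1\}$ is absorbed by the singular damping $\frac{N-1}{r}u'$ near $r=0$, quantify the effective amplitude $c_1(\alpha)$ and its monotone dependence on $\alpha$, and then upgrade the convergence from compact sets to all of $[0,\infty)$ --- in particular to guarantee, in the $\mathcal I_+$ case, that $u(\cdot;\alpha_+)$ never climbs back above $\alpha_1$ and remains uniformly positive. This global control, together with the verification that the separating solution truly decays to $0$ rather than exhibiting slow decay, is where the structural hypotheses $(H_2)$ and $(H_3)$ on $f_1$ enter, through the known classification and uniqueness theory for the $f_1$-problem.
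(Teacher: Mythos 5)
Your overall architecture matches the paper's: the first ground state is the $f_1$-ground state at $\alpha_*$ (where $f=f_1$ along the trajectory); the solution with $u(0)=\alpha_1$ is an $f_1$-trajectory of amplitude above $\alpha_*$, hence sign-changing by Proposition \ref{open-sets}; and the second ground state comes from the open-set shooting dichotomy once some amplitude above $\alpha_1$ is shown to give a globally positive solution. Your boundary-layer heuristic is also quantitatively the right one: the paper's Proposition \ref{enP-Agrande} and Lemma \ref{epsilon} implement exactly the rescaling $v(\sigma)=u(\sigma/A_2)$, the scale-invariance of $r|u'|$, and a drop of size $r_*|u'(r_*)|/(N-2)$ across the layer, with the two-sided condition $\bar a\le r_*|u'(r_*)|\le \bar b<(N-2)(\alpha_*-b)$ encoding your requirement that the ``effective amplitude'' land strictly inside $(b,\alpha_*)$.

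The genuine gap is the step you yourself flag as ``the main obstacle'': concluding $\alpha_+\in\mathcal I_+$. Uniform convergence of $u(\cdot;\alpha)$ to $U_{c_1(\alpha)}$ on compact sets --- even granting it --- cannot force global positivity, because membership in $\mathcal P$ is not a finite-radius property: continuous dependence controls the trajectory on $[0,T]$ for each fixed $T$, while a nearby trajectory may still cross zero at large $r$. Nor can you invoke openness of $\mathcal P$ or the classification $\mathcal P=(b,\alpha_*)$, $\mathcal N=(\alpha_*,\infty)$ from the uniqueness theory, since those concern solutions with $u'(0)=0$, whereas your post-layer solution reaches $\alpha_*$ at radius $r_*\to0$ with $|u'(r_*)|\sim\bar a/r_*\to\infty$, so near the origin it is not $C^1$-close to any regular $f_1$-solution. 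This is precisely why the paper devotes all of Section \ref{sec f1} to a bespoke comparison (Proposition \ref{EnP-f1}): the fast-arriving solution is shown to intersect \emph{twice} a fixed positive $f_1$-solution of amplitude in $(\alpha_*-\delta/2,\alpha_*-\delta/4)$ --- the second intersection forced by the superlinearity hypothesis $(H_3)$ via a Green-type identity --- after which monotonicity of the Erbe--Tang functional $P$ under $(H_2)$, combined with Proposition \ref{final1}, yields $u>v>0$ for all subsequent $r$. Without an argument of this type, valid on all of $[r_*,\infty)$, your construction of $\alpha_+\in\mathcal I_+$ is asserted rather than proved, and that is the heart of the theorem. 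Two smaller defects in the same step: when $\gamma<\infty$ and $f_2$ is an arbitrary positive continuous function, ``$\alpha$ large'' is unavailable and $c_1$ need not be monotone (only continuity is usable; the paper instead takes $\alpha$ close to $\alpha_*$ and exploits continuity of $r_*(\alpha)|w'(r_*(\alpha),\alpha)|$, which vanishes at $\alpha_*$, together with a lower bound $2(N-2)a<r_\epsilon|u'|$ that your sketch never secures, though Lemma \ref{u} and Lemma \ref{epsilon} genuinely need it); and openness of your set $\mathcal I_+$ as defined via $\liminf u>0$ is itself not immediate, which is why the paper works with the standard set $\mathcal P$ defined through $R(\alpha)$.
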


Moreover, if $f$ satisfies a subcritical type condition at infinity,  similar to the one introduced first by Castro and Kurepa see \cite{cku} and used by Gazzola, Serrin and Tang in \cite{gst},
\begin{enumerate}
\item[$(H_6)$]
Let $Q(s):=2NF(s)-(N-2)sf(s)$, where $F(s)=\int_0^sf(t)dt$. We assume that $Q$ is bounded from below  in $(0,\infty)$ and that
 there exists $\theta\in(0,1)$
\begin{equation}\label{subcrit}
\lim_{s\to\infty}\left(\inf_{s_1,s_2\in[\theta s,s]}Q(s_2)\Bigl(\frac{s}{f(s_1)}\Bigr)^{N/2}\right)=\infty,
\end{equation}
\end{enumerate}
we can obtain a third solution to \eqref{pde}. We have
\begin{theorem}\label{third solution}
Assume that $f_1$, $\alpha_*$ and $f_2$ be as in Theorem \ref{second solution} with $\gamma=\infty$, and let $\epsilon_1$ and $A_2$ be as in its conclusion. If $f$ satisfies $(H_6)$, then problem \eqref{pde} has at least three ground state solutions.
\end{theorem}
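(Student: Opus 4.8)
The plan is to run a shooting argument in the initial amplitude $\alpha=u(0)$ and to produce the third ground state as the separatrix between a band of positive, non-decaying solutions and a band of crossing solutions that $(H_6)$ forces at large amplitude. For $\alpha>0$ let $u(\cdot\,;\alpha)$ denote the solution of
\[
u''+\frac{N-1}{r}u'+f(u)=0,\qquad u(0)=\alpha,\qquad u'(0)=0,
\]
with $f$ given by \eqref{f} and $\gamma=\infty$, and classify $\alpha$ as crossing ($\alpha\in\mathcal C$, meaning $u(\cdot\,;\alpha)$ vanishes at a finite radius), ground state ($\alpha\in\mathcal G$), or positive ($\alpha\in\mathcal P$, meaning $u(\cdot\,;\alpha)>0$ on $[0,\infty)$ but $u(\cdot\,;\alpha)\not\to0$). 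The first observation is that $\mathcal C$ is open: at a first zero one has $u'<0$, so the zero is transversal and persists under the continuous dependence of $u(\cdot\,;\alpha)$ on $\alpha$ guaranteed by the continuity of $f$.

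From Theorem \ref{second solution} and its proof (with the $\epsilon_1,A_2$ fixed there) I would import two structural facts. First, $\alpha=\alpha_*$ gives the first ground state: by $(H_4)$ its solution stays in $\{u\le\alpha_*\}$, where $f\equiv f_1$, so it is exactly the ground state of \eqref{pdef1}. Second, there is a second ground state at some amplitude $\bar\alpha>\alpha_*$ and, because the factor $A_2^2$ makes solutions plunge rapidly through the amplified region and re-enter the region governed by $f_1$ with too little momentum to reach the origin, the construction yields a nonempty open interval $J\subset\mathcal P$ of amplitudes larger than $\bar\alpha$. Thus above $\bar\alpha$ there is a genuine pocket of positive, non-decaying solutions.

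Next I would close this pocket from above using $(H_6)$. With $Q(s)=2NF(s)-(N-2)sf(s)$ and the Pohozaev--Rellich functional
\[
P(r)=r^{N}\Bigl(\tfrac12 u'(r)^2+F(u(r))\Bigr)+\tfrac{N-2}{2}\,r^{N-1}u(r)u'(r),
\]
a direct computation gives $P'(r)=\tfrac12 r^{N-1}Q(u(r))$. Following the scheme of Gazzola--Serrin--Tang, the boundedness from below of $Q$ together with the growth condition \eqref{subcrit} forces every solution of sufficiently large initial amplitude to vanish at a finite radius, so that there exists $M>0$ with $(M,\infty)\subset\mathcal C$. This is the step where the piecewise definition of $f$ must be carried through, since \eqref{subcrit} is a statement about $A_2^2f_2(s)$ for large $s$.

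Finally, set $\alpha_3=\inf\{\alpha>\sup J:\ \alpha\in\mathcal C\}$. Since $(M,\infty)\subset\mathcal C$ this infimum is finite, and $\alpha_3\ge\sup J>\bar\alpha>\alpha_*$, so $\alpha_3$ is distinct from the two amplitudes of Theorem \ref{second solution}. Because $\mathcal C$ is open while $J$ and $(\sup J,\alpha_3)$ avoid $\mathcal C$, one checks $\alpha_3\notin\mathcal C$, whence $u(\cdot\,;\alpha_3)>0$ on $[0,\infty)$; taking $\alpha_n\downarrow\alpha_3$ in $\mathcal C$ with first zeros $R_n$, continuous dependence and positivity of $u(\cdot\,;\alpha_3)$ give $R_n\to\infty$. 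It then remains to show that $u(\cdot\,;\alpha_3)$ decays to $0$, i.e. $\alpha_3\in\mathcal G$. This decay is the main obstacle: the energy $E(r)=\tfrac12u'(r)^2+F(u(r))$ is nonincreasing and converges, so any accumulation value of $u(\cdot\,;\alpha_3)$ is a zero of $f$, either $0$ or a positive zero in $(0,b]$, and the danger is that $u(\cdot\,;\alpha_3)\to b$ as a type-$\mathcal P$ solution rather than to $0$. I expect to exclude this using that $\alpha_3$ is squeezed between the crossing solutions above it (whose first zeros diverge) and the positive pocket below it, together with the sign and monotonicity information in $(H_1)$--$(H_3)$ (in particular $f_1<0$ on $(0,\epsilon)$ and the separation in $(H_2)$), which prevent stabilization at the positive rest state and instead force the ground-state decay to $0$. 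Proving cleanly that the $\mathcal C$--$\mathcal P$ separatrix consists exactly of ground states is the crux; granting it, the three amplitudes $\alpha_*<\bar\alpha<\alpha_3$ yield three distinct radial ground states of \eqref{pde}.
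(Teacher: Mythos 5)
Your overall strategy is the same as the paper's: sandwich a third amplitude between the positive value $\alpha_2\in\mathcal{P}$ produced in the proof of Theorem \ref{second solution} and the crossing regime at large $\alpha$ forced by $(H_6)$, the latter exactly as in the paper via Castro--Kurepa \cite{cku} and Gazzola--Serrin--Tang \cite{gst} (your identity $P'(r)=\tfrac12 r^{N-1}Q(u(r))$ is correct and is the mechanism behind those citations). However, there are two genuine gaps, both located at the step you yourself call the crux. First, your set $\mathcal{C}$, defined as ``vanishes at a finite radius,'' is not open under $(H_1)$: since $f$ is only continuous at $0$ and $f_1<0$ on $(0,\epsilon)$, a solution may reach $0$ at a finite radius with $u'=0$ (a compact-support ground state, which this non-Lipschitz setting permits), so a first zero need not be transversal and your persistence argument fails at such points. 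Openness requires building the transversality $u'<0$ at the zero into the definition, which is precisely how the paper defines $\mathcal{N}$ via $R(\alpha)$.

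Second, and more seriously, you never exclude $\alpha_3\in\mathcal{P}$: the solution at your separatrix value could attain a positive minimum before vanishing, or could decrease monotonically to a positive zero of $f$ --- in particular to $b$, or to any interior zero of $f_1$ in $(0,b)$, which $(H_1)$ allows since it only requires $f_1\le 0$ there. The tools you invoke do not reach this case: $(H_2)$ is a hypothesis only for $s>\beta$, and negativity of $f_1$ near $0$ says nothing about stabilization at the rest state $b$; your proposal concedes the point by ``granting'' that the $\mathcal{C}$--$\mathcal{P}$ separatrix consists of ground states. The paper avoids your infimum construction entirely: by Proposition \ref{open-sets} i) (imported from \cite{pel-ser1}, \cite{pel-ser2}, \cite{cfe1}), both $\mathcal{N}$ and $\mathcal{P}$ are open disjoint subsets of $(b,\infty)$, and the standard classification underlying that proposition says that any $\alpha$ belonging to neither set yields a ground state. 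Since $\alpha_2\in\mathcal{P}$ and all sufficiently large $\alpha$ lie in $\mathcal{N}$, the connected interval between them cannot be covered by two disjoint open sets, producing $\alpha_3^*>\alpha_2$ with $u(\cdot,\alpha_3^*)$ a ground state --- a three-line argument once the openness of $\mathcal{P}$ is available. Your proof becomes complete if you replace the openness-of-$\mathcal{C}$-plus-infimum step by this two-open-sets argument, i.e., if you import (or prove, as the paper's appendix sketches) the openness of $\mathcal{P}$ together with the separatrix classification, rather than attempting to establish decay of $u(\cdot\,;\alpha_3)$ directly.
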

\medskip

We now consider the case $k=3$, that is $f$ given by
 \begin{eqnarray}\label{f_3}
f(s)=\begin{cases}
f_1(s) &  s\leq \alpha_1\\
L_1(s) & \alpha_1 \leq s\leq \alpha_1+\epsilon_1\\
A_2^2f_2(s) &   \alpha_1+\epsilon_1\leq  s\leq \alpha_2\\
L_2(s) & \alpha_2 \leq s\leq \alpha_2+\epsilon_2\\
A_3^2f_3(s) &  s\geq \alpha_2+\epsilon_2,
\end{cases}
\end{eqnarray}
where  $L_1(s)$ is the line from $(\alpha_1, f_1(\alpha_1))$ to $(\alpha_1+\epsilon_1, A_2^2f_2(\alpha_1+\epsilon_1))$,  $L_2(s)$ is the line from $(\alpha_2, A_2^2f_2(\alpha_2))$ to $(\alpha_2+\epsilon_2, A_3^2f_3(\alpha_2+\epsilon_2))$ and  $\alpha_1,\  \epsilon_1$ and  $A_2$ are constants that satisfy Theorem \ref{second solution}. The constants $ \alpha_2,\ \epsilon_2$ and $A_3$  will be determined later.

 \begin{theorem}\label{solution f_3}
Under  assumptions $(H_1)$-$(H_5)$ ,
there exist positive  constants $\epsilon_1$, $\epsilon_2$ ,  $A_2$ and $A_3$ such that  problem \eqref{pde} with $f$ given by \eqref{f_3} has at least three ground state solutions.
\end{theorem}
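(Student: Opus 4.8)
The plan is to treat Theorem~\ref{solution f_3} as one more step of the bootstrap that produced Theorem~\ref{second solution}: the two ground states already obtained for the $k=2$ nonlinearity will be shown to survive the insertion of the third tier, while the abrupt magnitude change carried by $A_3^2f_3$ manufactures a third one. Concretely, I would fix $\epsilon_1$ and $A_2$ as in the conclusion of Theorem~\ref{second solution} and let $\hat f$ denote the resulting $k=2$ nonlinearity \eqref{f}. By that theorem $\hat f$ admits two ground states $u_1,u_2$ of \eqref{eq2}, with $u_i(0)=\alpha^{(i)}$ and $\alpha^{(1)}<\alpha^{(2)}$ (here $\alpha^{(1)}=\alpha_*$ is the persisting $f_1$ ground state). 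The first observation is that the ground states of \eqref{eq2} for a positive nonlinearity are strictly decreasing, so $u_i(r)\in(0,\alpha^{(i)}]$ for all $r$. Hence, if I choose $\alpha_2>\max\{\alpha^{(2)},\alpha_1+\epsilon_1\}$ and attach $L_2$ and $A_3^2f_3$ only above $\alpha_2$, then along $u_1$ and $u_2$ one has $u_i<\alpha_2$ and therefore $f=\hat f$; thus $u_1,u_2$ remain ground states of \eqref{eq2} for the $k=3$ nonlinearity \eqref{f_3}. This already yields two solutions and isolates the real task: producing a third one with initial value above $\alpha_2$.

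For the third solution I would run the same shooting/continuous-dependence scheme used for Theorem~\ref{second solution}. Write $u(\cdot;\alpha)$ for the solution with $u(0)=\alpha$, $u'(0)=0$, and classify $\alpha$ according to whether $u(\cdot;\alpha)$ is a crossing solution (vanishing at a finite radius) or a non-crossing one (positive for all $r$); continuity of $f$ gives continuous dependence and makes both sets open, so every boundary initial value is a ground state. Because a solution with $u(0)=\alpha$ never exceeds $\alpha$, for every $\alpha\le\alpha_2$ the trajectory remains in the region where $f=\hat f$, and the crossing/non-crossing dichotomy is \emph{identical} to that of the $k=2$ problem. In particular, with $\alpha_2$ picked just above $\alpha^{(2)}$, the values $\alpha$ slightly below $\alpha_2$ lie on the non-crossing side adjacent to $u_2$, a local structure I would carry over verbatim from the proof of Theorem~\ref{second solution}.

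It then remains to show that large initial values in the top tier are crossing. Here I would exploit the magnitude change exactly as before: in the region $u\ge\alpha_2+\epsilon_2$ the equation reads $u''+\frac{N-1}{r}u'+A_3^2f_3(u)=0$, and the substitution $r=t/A_3$ turns it into $\ddot u+\frac{N-1}{t}\dot u+f_3(u)=0$. Since $f_3>0$ on its domain, on the $t$-scale the solution is strictly decreasing and leaves the top tier in finite $t$; translating back, $u(\cdot;\alpha)$ reaches the level $\alpha_2+\epsilon_2$ at a radius $O(1/A_3)$ carrying an inward velocity of order $A_3$. With such large downward momentum entering the fixed $\hat f$-region, the trajectory is driven across zero, so $u(\cdot;\alpha)$ is crossing once $A_3$ is large enough (and $\epsilon_2$ small enough that the linear bridge $L_2$ does not spoil the estimate). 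Thus the non-crossing side near $\alpha_2$ and the crossing side for large $\alpha$ are separated by a boundary value $\alpha^{(3)}>\alpha_2$, and $u(\cdot;\alpha^{(3)})$ is the third ground state; it is distinct from $u_1,u_2$ since $\alpha^{(3)}>\alpha_2>\alpha^{(2)}>\alpha^{(1)}$.

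The hard part is the third paragraph: making the ``exit with velocity of order $A_3$ forces crossing'' argument quantitative and uniform. One must control the exit position and velocity of the rescaled $f_3$-flow uniformly in $\alpha$ over the whole top tier (in particular when $\gamma<\infty$, so that crossing persists as $\alpha\to\gamma^-$), bound the momentum gained across the thin bridge $L_2$, and then prove that a trajectory entering the fixed $\hat f$-region with sufficiently large inward velocity must vanish at a finite radius rather than turn around---an estimate on the energy $E=\tfrac12(u')^2+F(u)$ along the $\hat f$-dynamics. One must also confirm that the boundary solution genuinely decays to $0$ rather than merely staying positive and bounded, which in the $k=2$ engine is part of the classification and should be reusable here without new hypotheses. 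Once these uniform estimates are in place, the prescription ``$\epsilon_2$ small, $A_3$ large'' closes the argument exactly as in Theorem~\ref{second solution}, and the same scheme iterates to give the general $k$-tier statement.
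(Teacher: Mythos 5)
There is a genuine gap, and it sits exactly where you flagged the ``hard part'': your third paragraph has the mechanism backwards. Enlarging $A_3$ cannot force crossing. While $u$ remains in the top tier it satisfies $u(r)=w(A_3 r)$, where $w$ solves the same problem with $A_3=1$; hence at the exit level $u=\alpha_2+\epsilon_2$ the \emph{scale-invariant} momentum $r|u'(r)|$ is independent of $A_3$ --- the exit happens at radius $O(1/A_3)$ with velocity $O(A_3)$, but it is the product $r|u'|$, not $|u'|$, that governs the subsequent fall (once $f\le 0$ the quantity $r^{N-1}|u'|$ decreases, so a trajectory entering with momentum $m=r|u'|$ can drop at most about $m/(N-2)$ further; this is precisely the role of the bound $\bar b<(\alpha_*-b)(N-2)$ in Proposition \ref{EnP-f1}). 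Worse, a large-amplitude tier traversed with bounded $r|u'|$ is exactly the mechanism Proposition \ref{enP-Agrande} uses to manufacture \emph{positive} solutions, and Example 3 of the paper exhibits an $f$ satisfying $(H_1)$--$(H_5)$ (supercritical top tier) for which all large initial values lie in $\mathcal P$. Since here $f_3$ is only assumed positive and continuous, with $\gamma$ possibly finite, there is no way to secure the uniformly large entering momentum your sketch requires; indeed the paper needs the extra subcriticality hypothesis $(H_6)$ precisely to get crossing from above (Theorem \ref{third solution}), and Theorem \ref{solution f_3} is designed to avoid any such hypothesis.

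The paper's actual proof goes in the opposite direction: it chooses $A_3$ \emph{small}. The key lemma you are missing is that there is a constant $K=K(f_1)$ (\cite[Lemma 3.1]{gst}) such that any solution reaching the level $\alpha_*$ at a radius larger than $K$ belongs to $\mathcal N$. Taking $\epsilon_2<\frac{\gamma-\alpha_*}{4}$ and $\alpha\in(\alpha_2+2\epsilon_2,\alpha_2+3\epsilon_2)$, one has $|u'(r)|\le A_3^2\,\|f_3\|\,r/N$ while $u\ge\alpha_2+\epsilon_2$, so the radius $r_\epsilon$ at which $u=\alpha_2+\epsilon_2$ obeys $\epsilon_2\le A_3^2\|f_3\|\,r_\epsilon^2/(2N)$; choosing $A_3$ small forces $r_\epsilon>K$, hence $u$ reaches $\alpha_*$ beyond $K$ and $\alpha\in\mathcal N$. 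The $\mathcal P$-anchor below the new tier is the explicit value $\alpha_2\in\mathcal P$ furnished by Proposition \ref{enP-Agrande}, which the paper takes as the tier threshold itself; this also repairs a secondary soft spot in your second paragraph, where you assert that values slightly below the threshold are non-crossing ``adjacent to $u_2$'' --- near the second ground state you only know a boundary point of $\mathcal N$ and $\mathcal P$, not a one-sided $\mathcal P$-neighborhood, so the anchor must be the concrete $\mathcal P$ value, with the threshold placed at or above it. Your first two paragraphs (persistence of the two existing ground states below the new tier, and the open-sets shooting framework) do match the paper; only the engine for the third solution needs to be replaced by this slow-descent, large-radius argument.
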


Finally, we address the general case $k\geq 4$.

 \begin{theorem}\label{multiple}
Let   $f_i$, $i=1,\ldots,k$ satisfy assumptions $(H_1)$-$(H_5)$ For $i=2...k$, there exists constants  $\epsilon_i>0$, $ A_{i}>0$  and $\alpha_i$ with the condition $\alpha_*<\alpha_{i-1}+\epsilon_{i-1}<\alpha_{i}$  such that problem \eqref{pde} with
$$f=f_1\chi_{[0,\alpha_*+\epsilon_1]}+\sum_{i=2}^kL_{i-1}
\chi_{[\alpha_{i-1},\alpha_{i-1}+\epsilon_{i-1}]}+
\sum_{i=2}^{k-1}A_{i}^2f_{i}\chi_{[\alpha_{i-1}+\epsilon_{i-1},\alpha_{i}]}+A_{k}^2f_{k}\chi_{[\alpha_{k-1}+\epsilon_{k-1},\gamma)}$$
has at least $k$ ground state solutions.
\end{theorem}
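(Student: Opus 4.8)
The plan is to argue by induction on $k$, taking Theorem \ref{solution f_3} as the base case $k=3$ and showing that the passage from $k-1$ to $k$ reproduces, essentially word for word, the single magnitude-jump construction already performed in Theorems \ref{second solution} and \ref{solution f_3}. Suppose that the constants $\{\alpha_i,\epsilon_i\}_{i\le k-2}$ and $\{A_i\}_{i\le k-1}$ have been fixed so that the $(k-1)$-piece nonlinearity, whose top piece is $A_{k-1}^2 f_{k-1}$ on $[\alpha_{k-2}+\epsilon_{k-2},\gamma)$, already produces $k-1$ ground states with ordered initial heights $\alpha^{(1)}<\cdots<\alpha^{(k-1)}$, the last of which lies in that top region. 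I would then add the new jump above the level $\alpha_{k-1}$ by choosing $\alpha_{k-1}>\alpha^{(k-1)}$, inserting the transition line $L_{k-1}$ on $[\alpha_{k-1},\alpha_{k-1}+\epsilon_{k-1}]$, and replacing $A_{k-1}^2 f_{k-1}$ above $\alpha_{k-1}$ by the scaled piece $A_k^2 f_k$, thus obtaining the $k$-piece nonlinearity $f$.

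The first step is to establish persistence of the $k-1$ old ground states. By construction the new $f$ coincides with the old nonlinearity on $[0,\alpha_{k-1}]$, so any solution whose orbit never rises above $\alpha_{k-1}$ solves both problems. Each old ground state has initial height $\alpha^{(j)}\in(\beta,\alpha_{k-1})$; since $u'(0)=0$ and $f>0$ on $(b,\infty)$ the corresponding orbit starts strictly decreasing, and the monotonicity of the energy $E=\frac12(u')^2+F(u)$ together with the fact that $F$ is strictly increasing on $(b,\infty)$ prevents $u$ from ever returning above $\alpha^{(j)}$. Hence each such orbit stays below $\alpha_{k-1}$ and is left unchanged by the modification, so the $k-1$ old ground states remain ground states of the new $f$.

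It then remains to produce one further ground state with initial height above $\alpha_{k-1}$, and this is precisely the situation treated in Theorem \ref{second solution}, now with the role of the base nonlinearity $f_1$ played by $A_{k-1}^2 f_{k-1}$ on the top region and the role of its ground state played by the topmost state at $\alpha^{(k-1)}$. Running that argument (as iterated in Theorem \ref{solution f_3}) --- taking $\epsilon_{k-1}$ small and $A_k$ large, and exploiting the scaling $r\mapsto A_k r$ relating the equations with $f_k$ and $A_k^2 f_k$ to control the energy dissipated as the orbit crosses the new region --- yields a value $\alpha^{(k)}>\alpha_{k-1}$ at which the shooting type switches and $u(\cdot\,;\alpha^{(k)})$ is a ground state; taking $A_k$ large keeps $\alpha^{(k)}$ close to $\alpha_{k-1}$, so that all $k$ initial heights remain below $\gamma$. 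Since $\alpha^{(1)}<\cdots<\alpha^{(k-1)}<\alpha_{k-1}<\alpha^{(k)}<\gamma$, the $k$ ground states are pairwise distinct, which closes the induction.

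The main obstacle is to justify that the one-jump construction of Theorems \ref{second solution} and \ref{solution f_3} goes through with $A_{k-1}^2 f_{k-1}$ in the place formerly occupied by $f_1$, even though $f_{k-1}$ is not assumed to satisfy the structural hypotheses $(H_2)$-$(H_3)$ that were available for $f_1$. The point to verify carefully is that these hypotheses are invoked only to create and single out the very first ground state at the bottom of the configuration, and are no longer needed once one is merely perturbing an already-existing ground state at a higher level; what that higher-level step genuinely uses --- positivity and continuity of $f_{k-1}$, and the freedom to take $A_k$ large and $\epsilon_{k-1}$ small --- is exactly what $(H_5)$ and the inductive choice of constants supply. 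A secondary point, required for the uniformity of the induction, is to confirm that the topmost ground state of the $(k-1)$-piece problem always lies in its top scaled region, so that it presents to the newly added jump the very same local configuration that the $f_1$-ground state presented in Theorem \ref{second solution}.
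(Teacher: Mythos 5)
Your overall scaffolding (alternate jumps, persistence of lower ground states, openness and disjointness of $\mathcal P$ and $\mathcal N$) matches the paper's, and your persistence step is sound, since membership of an initial height $\alpha$ in $\mathcal P$, $\mathcal N$ or $\mathcal G$ depends only on $f$ restricted to $[0,\alpha]$. But the core of your inductive step has a genuine gap, and it is exactly the point you flag and then wave away. You claim that the hypotheses $(H_2)$--$(H_4)$ are ``invoked only to create and single out the very first ground state'' and that the one-jump construction of Theorem \ref{second solution} can be re-based with $A_{k-1}^2f_{k-1}$ playing the role of $f_1$ and the topmost ground state $\alpha^{(k-1)}$ playing the role of $\alpha_*$. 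This is false: the engine of Theorem \ref{second solution} is Proposition \ref{enP-Agrande}, which feeds into Proposition \ref{EnP-f1}, and the proof of Proposition \ref{EnP-f1} uses the structure of the base nonlinearity \emph{below} the ground-state level in an essential way. Lemma \ref{vw} forces a second intersection with the comparison solutions $v,w$ via the superlinearity $(H_3)$; Lemmas \ref{s_0}--\ref{proof} and Proposition \ref{final1} rest on the monotonicity of the Erbe--Tang functional $P$, which is $(H_2)$; and the availability of the comparison family at all (a full interval of positive solutions just below the ground state, $\mathcal P=(b,\alpha_*)$ and $\mathcal N=(\alpha_*,\infty)$) is Theorem \ref{unic}, which again needs $(H_2)$ and $(H_4)$. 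For $f_{k-1}$ you only have $(H_5)$ (positivity and continuity), and the inductive hypothesis gives you only \emph{isolated points} of $\mathcal P$ and $\mathcal N$ around $\alpha^{(k-1)}$, not a classification: you have no interval of $\mathcal P$ immediately below $\alpha^{(k-1)}$ to compare against, and no $\mathcal N$ point immediately above it analogous to $(\alpha_*,\alpha_1]\subset\mathcal N$. So the ``same local configuration'' you need at the top level simply is not known to exist, and the one-jump argument does not transplant.

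The paper avoids this by never re-basing at a higher ground state. In Propositions \ref{odd} and \ref{even} every new test solution with initial height $\alpha_i$ is driven \emph{all the way down through the whole stack of pieces to $\alpha_*$}, with quantitative bookkeeping: $\epsilon_0$ is taken of size $(3N)^{-k}$, the heights satisfy $\alpha_i\le\alpha_*+\epsilon_0(3N)^i$, the oscillation bound $\|f_i\|_+/\|f_i\|_-\le 3/2$ holds on $[\alpha_*,\alpha_0]$, and Lemmas \ref{delta} and \ref{epsilon} propagate two-sided bounds on $r|u'(r)|$ across each region $[\alpha_{j-1}+\epsilon_{j-1},\alpha_j]$ and each transition strip. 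For even $i$ (large $A_i$) this lands the solution at $\alpha_*$ with $r_*<R$ and $\bar a\le r_*|u'(r_*)|\le\bar b$, so Proposition \ref{EnP-f1} --- which lives entirely in the $f_1$ region, where $(H_1)$--$(H_4)$ do hold --- yields $\alpha_i\in\mathcal P$; for odd $i$ (small $A_i$) the solution reaches $\alpha_*$ at radius $r>K(f_1)$, so the criterion of \cite[Lemma 3.1]{gst} yields $\alpha_i\in\mathcal N$. Ground states are then extracted between consecutive $\alpha_{i-1}$ and $\alpha_i$. In short: your induction outsources the hard work to a theorem whose hypotheses are not satisfied at the new base level, whereas the paper's recursion keeps the hard analysis anchored at $f_1$ and pays for it with the uniform-in-$k$ estimates that your proposal omits entirely.
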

\noindent{\bf Remark}: It will follow from the proof of Theorem \ref{third solution} that if $k$ is even and $f_k$ satisfies $(H_6)$, we obtain a $(k+1)$th ground state.
\bigskip

Our results are based on the study of  initial value problems of the form
\begin{eqnarray}\label{ivp}
\begin{gathered}
u''+\frac{N-1}{r}u'+f(u)=0,\quad r>0,\quad N> 2,\\
u(r_0)=\alpha,\quad u'(r_0)=\bar\alpha
\end{gathered}
\end{eqnarray}
for some $r_0\ge 0,\ \alpha>0$ and $\bar\alpha\in\mathbb R_0^-$. Under our assumptions on $f$ the solution to this problem is unique. In case that $r_0=0$, we set $\bar\alpha=0$ so it corresponds to a radially symmetric solution of problem \eqref{pde}. In section \ref{prel1} we give some preliminary properties of these solutions.

In section \ref{sec f1} we study the solutions to \eqref{ivp}  when $u(r_0)=\alpha_*$ and $u'(r_0)=\bar\alpha<0$, so that $f=f_1$ in its range. Under our assumptions $(H_1)$ through $(H_4)$ we will use  the well known functional $P$ introduced  by Erbe and Tang in \cite{et}, to compare our solutions to the known positive solutions of  \eqref{ivp}. We prove that, under some conditions on $r_0$ and $\bar \alpha$, these solutions stay positive.

Next, in section  \ref{sec f2}, we will study how the constants $A_2$ and $\epsilon_1$ affect the solutions of \eqref{ivp} with $r_0=0$, $\alpha>\alpha_*$ and $\bar\alpha=0$, proving that for small enough $\epsilon_1$ the effect can be controlled. The interesting part is that for $A_2$ big enough, solutions will reach $\alpha_*$ satisfying the conditions found in the previous section. Thus, there is a positive solution with initial condition $\alpha_1^*>\alpha_*$, which implies the existence of a ground state solution between them and proves Theorem \ref{second solution}.

When $f$ also satisfies $(H_6)$, using results from \cite{gst} we show that solutions with large enough initial value $\alpha$ change sign. This will prove the existence of a third ground state solution with initial condition larger than the one before, proving Theorem \ref{third solution}.

We finish this section adding another magnitude change in $f$, making it small for values of $\alpha$ larger than the $\alpha_1^*$ found in Theorem \ref{second solution}. We prove that if $A_3$ is small enough there will be another ground state solution with initial condition $\alpha_2^*>\alpha_1^*$, proving Theorem \ref{solution f_3}.

In section \ref{fk} we prove the general case, Theorem \ref{multiple}. Alternating between big and small $A_i$ we can use the arguments from the previous theorems to determine recursively the constants  $\alpha_i,\  \epsilon_i$ and  $A_i$ so that the solutions with initial condition $\alpha_i$ are positive if $i$ is even, and change sign if $i$ is odd, hence obtaining $k$ ground state solutions.

We finish the paper with some examples in section \ref{examples}. For the case $k=2$ they show the different behavior of solutions with large $\alpha$, showing that some condition on $f_2$ is necessary for solutions with large $\alpha$ to change sign.

We also include an Appendix, where we sketch the proofs of some of the properties of solutions to the initial value problem \eqref{ivp} given in section \ref{prel1}. These are known facts,  but, to  our knowledge there is no paper where the results are proven with our conditions.

\section{Preliminaries}\label{prel1}

The aim of this section is to establish several properties of the solutions to the initial value problem \eqref{ivp} in the case $r_0=0$ and $\bar\alpha=0$.
with $f\in C[0,\infty)$ and such that $(H_1)$ is satisfied. This problem has a unique solution defined for all $r>0$ for any $\alpha>0$ and we denote it by $u(\cdot,\alpha)$.

It can be seen that for $\alpha\in(b,\infty)$, one has $u(r,\alpha)>0$ and $u'(r,\alpha)<0$ for
$r$ small enough, and thus  we can define
$$R(\alpha):=\sup\{r>0\ |\ u(s,\alpha)>0\mbox{ and }u'(s,\alpha)<0\ \mbox{ for all }s\in(0,r)\}.$$

Following \cite{pel-ser1}, \cite{pel-ser2} we set
\begin{eqnarray*}
{\mathcal N}&=&\{\alpha \in (b,\infty)\ :\ u(R(\alpha),\alpha)=0\quad\mbox{and}\quad u'(R(\alpha),\alpha)<0\}\\
{\mathcal G}&=&\{\alpha\in (b,\infty) \ :\ u(R(\alpha),\alpha)=0\quad\mbox{and}\quad u'(R(\alpha),\alpha)=0\}\\
{\mathcal P}&=&\{\alpha\in (b,\infty) \ :\ u(R(\alpha),\alpha)>0\}.
\end{eqnarray*}

The following propositions state some known facts, but, to  our knowledge there is no paper where the result is proven with our conditions. So, for the sake of completeness,  we will give a sketch of the proof  in the appendix.

\begin{proposition}
\label{open-sets}\mbox{ }\\

i) If $f $ satisfies  $(H_1)$ and $(H_5)$, then the sets ${\mathcal N}$ and ${\mathcal P}$ are open sets.\\

ii) If $f_1 $  satisfies the assumptions $(H_1)$, $(H_2)$, and $(H_4)$ then for the problem
\begin{eqnarray}\label{eq22}
\begin{gathered}
u''+\frac{N-1}{r}u'+f_1(u)=0,\quad r>0,\quad N> 2,\\
u'(0)=0,\quad \lim\limits_{r\to\infty}u(r)=0,
\end{gathered}
\end{eqnarray}
the ground state is unique and
${\mathcal P}$=$(b,\alpha_*)$ and ${\mathcal N}$ =$(\alpha_*, \infty)$.

\end{proposition}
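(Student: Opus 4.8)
The plan is to handle the two parts separately, relying throughout on the fact that, since the initial value problem has a unique solution, both $u(\cdot,\alpha)$ and $u'(\cdot,\alpha)$ depend continuously on $\alpha$, uniformly on compact $r$-intervals.

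For part i) I would argue that a solution leaves the region $\{u>0,\ u'<0\}$ transversally, so membership in $\mathcal N$ or $\mathcal P$ persists under small perturbations of $\alpha$. Fix $\alpha_0\in\mathcal N$ and set $R_0=R(\alpha_0)$, so $u(R_0,\alpha_0)=0$ while $u'(R_0,\alpha_0)<0$; then $u(\cdot,\alpha_0)$ is strictly negative just past $R_0$. Near the origin the integrated equation gives $u'(r,\alpha)\sim-\tfrac{f(\alpha)}{N}r$, and since $f(\alpha)>0$ for $\alpha>b$ this yields, uniformly for $\alpha$ near $\alpha_0$, that $u'(\cdot,\alpha)<0$ on a fixed $(0,\delta]$; on the compact set $[\delta,R_0-\eta]$ the functions $u(\cdot,\alpha_0)$ and $u'(\cdot,\alpha_0)$ are bounded away from $0$ by a positive, resp.\ a negative, constant. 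Continuous dependence then forces, for all $\alpha$ close to $\alpha_0$, that $u(\cdot,\alpha)>0$, $u'(\cdot,\alpha)<0$ on $(0,R_0-\eta]$ and that $u(\cdot,\alpha)$ crosses zero with negative slope in $(R_0-\eta,R_0+\eta)$, i.e.\ $\alpha\in\mathcal N$. For $\alpha_0\in\mathcal P$ the exit is through $u'=0$ with $u>0$; since $u'\uparrow0$ on $(0,R_0)$ we have $u''(R_0,\alpha_0)=-f(u(R_0,\alpha_0))\ge0$, hence $0<u(R_0,\alpha_0)\le b$, and in the nondegenerate case $f(u(R_0,\alpha_0))<0$ the solution has a strict positive minimum at $R_0$, a property again stable under perturbation. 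The only point needing extra care is the degenerate case $f(u(R_0,\alpha_0))=0$, which I would dispose of using that the energy $E=\tfrac12(u')^2+F(u)$ is nonincreasing.

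For part ii) I would first record the trichotomy. For $\alpha\in(b,\infty)$, either $R(\alpha)=\infty$, whence $u\to0$, $u'\to0$ and $\alpha\in\mathcal G$, or $R(\alpha)<\infty$, in which case uniqueness excludes $u(R(\alpha))=u'(R(\alpha))=0$ and $\alpha$ lies in $\mathcal N$ or $\mathcal P$. Thus $(b,\infty)=\mathcal N\cup\mathcal G\cup\mathcal P$ disjointly, $\mathcal G$ is relatively closed by part i), and $\alpha_*\in\mathcal G$ by $(H_4)$. A quick energy check shows $(b,\beta)\subseteq\mathcal P$ (for such $\alpha$ one has $E\le F_1(\alpha)<0$, so $u$ can neither reach $0$ nor be a ground state), and that $\alpha_*>\beta$. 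It then remains to establish the uniqueness $\#\mathcal G=1$ together with the separation of $\mathcal P$ below $\mathcal N$; once these hold, $\mathcal G=\{\alpha_*\}$, and since $(b,\alpha_*)$ and $(\alpha_*,\infty)$ are connected and split into the two disjoint open sets $\mathcal N,\mathcal P$, the inclusion $(b,\beta)\subseteq\mathcal P$ and nonemptiness of $\mathcal N$ identify them as $\mathcal P=(b,\alpha_*)$ and $\mathcal N=(\alpha_*,\infty)$.

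The heart of the matter, and the step I expect to be hardest, is to extract uniqueness and separation from $(H_2)$ via the Erbe--Tang functional \cite{et}
\[
P(r)=r^{N}\left(\tfrac12\,u'(r)^2+F_1(u(r))\right)+\tfrac{N-2}{2}\,r^{N-1}\,u(r)\,u'(r),
\]
whose derivative along a solution reduces, after using the equation, to $P'(r)=r^{N-1}f_1(u)\,h(u)$ with $h(s)=N\,F_1(s)/f_1(s)-\tfrac{N-2}{2}\,s$. Since $h'(s)=N(F_1/f_1)'(s)-\tfrac{N-2}{2}>0$ for $s>\beta$ by $(H_2)$, and $h(\beta)=-\tfrac{N-2}{2}\beta<0$ because $F_1(\beta)=0$, the function $h$ changes sign exactly once on $(\beta,\infty)$, which pins down the sign of $P'$. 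Reading off boundary values, $P(0)=0$, while at a finite exit $P(R(\alpha))=\tfrac12 R(\alpha)^N u'(R(\alpha))^2>0$ when $\alpha\in\mathcal N$, and $P(R(\alpha))=R(\alpha)^N F_1(u(R(\alpha)))<0$ when $\alpha\in\mathcal P$ (as $0<u(R(\alpha))\le b<\beta$ and $F_1<0$ on $(0,\beta)$), whereas $P\to0$ along a ground state. Feeding this single–sign–change structure of $P'$ into the shooting/comparison scheme of \cite{et}, I would then conclude that the sign of $P$ at the exit varies monotonically with $\alpha$ and can vanish at infinity for at most one value of $\alpha$, which simultaneously gives the uniqueness of the ground state and places all $\alpha>\alpha_*$ in $\mathcal N$ and all $\alpha\in(b,\alpha_*)$ in $\mathcal P$, completing the identification.
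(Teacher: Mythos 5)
Your part i) and the soft architecture of part ii) are sound and largely mirror the paper: for i) the paper simply cites \cite{pel-ser1}, \cite{pel-ser2}, \cite{cfe1}, and your transversality/energy sketch is the standard argument (though the degenerate exit with $f(u(R_0))=0$ and $F_1(u(R_0))=0$, which can occur when $b=\beta$, is not actually settled by ``energy nonincreasing'' alone); for ii) the trichotomy, the inclusion $(b,\beta)\subseteq\mathcal P$, the strict inequality $\alpha_*>\beta$, and the openness-plus-connectedness identification are exactly how the paper concludes in Theorem \ref{unic}. Your computation $P'(r)=r^{N-1}f_1(u)\,h(u)$ with $h'>0$ on $(\beta,\infty)$ under $(H_2)$ is also the correct reading of the hypothesis, with two caveats: $(H_2)$ gives \emph{at most} one sign change of $h$ on $(\beta,\infty)$, not exactly one; and $P\to0$ along a ground state is not free --- it requires decay of $r^Nu'^2$, $r^NF_1(u)$ and $r^{N-1}uu'$, which is why the paper proves and uses only the weaker fact $\lim_{r\to\infty}r|u_*'(r)|=0$.

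The genuine gap is your final step. The assertion that ``the sign of $P$ at the exit varies monotonically with $\alpha$ and can vanish at infinity for at most one value of $\alpha$'' \emph{is} the theorem, and nothing you derived supports it: the identity for $P'$ is a single-solution statement and supplies no mechanism for comparing two solutions with different initial values. The paper's actual mechanism (Appendix) is a two-solution comparison carried out in the inverse variable $s$: nearby solutions intersect at a positive level (Proposition \ref{SI2}, quoted from \cite{cghy2}); at the \emph{last} intersection $s_I$ one obtains the strict ordering $P_2(s_I)<P_1(s_I)$ via monotonicity of $r_2^{N-1}r_1'/(r_1^{N-1}r_2')$ (Proposition \ref{Inter}); above $\beta$, $(H_2)$ propagates this ordering and forbids further intersections, but below $\beta$ --- precisely where your $h$-analysis is silent, since $F_1<0$ there and $f_1$ changes sign at $b$ --- the paper must switch to the Franchi--Lanconelli--Serrin functional $W(s)=r(s)\sqrt{u'(r(s))^2+2F_1(s)}$ to preserve the separation and extract the quantitative gap $r_1(s)|u_1'(r_1(s))|<r_2(s)|u_2'(r_2(s))|-c$ (Proposition \ref{finalI}). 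Combined with $r|u_*'(r)|\to0$ along the ground state, this yields $(\alpha_*-\delta,\alpha_*)\subset\mathcal P$ and $(\alpha_*,\alpha_*+\delta)\subset\mathcal N$, after which openness and connectedness finish exactly as you describe; note this local structure also delivers the nonemptiness of $\mathcal N$, which you invoked without proof. Until you supply this comparison step (or a substitute such as the Coffman/Kwong--McLeod--Serrin monotone-separation machinery), your proposal is a plan rather than a proof.
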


\begin{proof}

i) See, for example,  \cite{pel-ser1}, \cite{pel-ser2},  \cite{cfe1}.

ii)From Theorem \ref{unic}.
\end{proof}
\medskip

 We will use the functional introduced first by Erbe and Tang in \cite{et}, as well as many of their ideas.

For a solution $u$ of \eqref{ivp}, let

$$P(s)=-2N\frac{F}{f}(s)\frac{r^{N-1}(s)}{r'(s)}-\frac{r^N(s)}{(r'(s))^2}
-2r^N(s)F(s),\quad $$
where $r(s)$ denotes the inverse of $u$ in the interval $(0,R(\alpha))$, and note that

\begin{equation}P_s(s)=\frac{\partial P}{\partial s}(s)=\left(N-2-2N\Bigl(\frac{F}{f}\Bigr)'(s)\right)\frac{r^{N-1}(s)}{r'(s)}.
\end{equation}

\begin{proposition}\label{final1}

Let f satisfy $(H_1)$ and $(H_2 )$. Let $u_1$, $u_2$ be two solutions of \eqref{ivp},  such that $u_1(r_I)= u_2(r_I)= s_I$,  $u'_1(r_I)= \bar \alpha_1$, $u'_2(r_I)= \bar \alpha_2$ with $ \bar \alpha_2 <\bar \alpha_1<0 $. Assume furthermore that there exists $\bar s>0$ such that $u_2'<0$ in $(\bar s,s_I]$ and $u_2'(r_2(\bar s))=0$.
If

i)   $s_I >\beta$  and  $ P_2(s_I)< P_1(s_I)< 0 $ , where $P_i=P(s,u_i)$,

or

ii) $s_I \leq \beta$ ,\\
then, there exists $t\in[\bar s,s_I)$ such that $u_1'(r_1(t))=0$ and $r_1(s)>r_2(s)$ in $(t,s_I)$.

\end{proposition}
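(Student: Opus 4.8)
The plan is to work throughout in the inverse variable. On the branch where $u_i'<0$ let $r_i(s)$ denote the inverse of $u_i$, and set $v_i(s):=u_i'(r_i(s))<0$, so that $r_i'(s)=1/v_i(s)$ and the pair $(r_i,v_i)$ solves the first order system $r_i'=1/v_i$, $v_i'=-(N-1)/r_i-f(s)/v_i$. Both solutions satisfy the \emph{same} system, with common data $r_1(s_I)=r_2(s_I)=r_I$ and $r_i'(s_I)=1/\bar\alpha_i$. Since $\bar\alpha_2<\bar\alpha_1<0$ forces $r_1'(s_I)<r_2'(s_I)<0$, we have $r_1(s)>r_2(s)$ for $s$ just below $s_I$; this is the ordering to be propagated. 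I would state the goal as: letting $(\tau,s_I]$ be the maximal interval on which simultaneously $u_1'<0$ and $v_1>v_2$, show that $\tau\ge\bar s$ and that at the left endpoint it is $u_1'$, not $v_1-v_2$, that vanishes.

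The first reduction is that the ordering $r_1>r_2$ is automatically reinforced as long as $v_1>v_2$: indeed $\tfrac{d}{ds}(r_1-r_2)=1/v_1-1/v_2<0$ when $v_1>v_2<0$, so $r_1-r_2$ grows as $s$ decreases and cannot return to zero before a \emph{velocity crossing} $v_1=v_2<0$ occurs. Thus an inverse crossing is never the first bad event, and the entire problem reduces to excluding a velocity crossing on $(\bar s,s_I)$. The payoff, granting no velocity crossing, is immediate: if $v_1>v_2$ persists down to $\bar s$, then at $\bar s$ continuity gives $v_1(\bar s)\ge v_2(\bar s)=0$, while $v_1\le 0$ on the descent, so $v_1(\bar s)=0$; hence $u_1$ turns at some $t\ge\bar s$, and $r_1>r_2$ survives on $(t,s_I)$, which is the conclusion. (Equivalently, with $E_i=\tfrac12v_i^2+F$ one has $E_1(s_I)<E_2(s_I)$, and $E_1<E_2$ maintained down to $\bar s$ would force $E_1(\bar s)<E_2(\bar s)=F(\bar s)$, impossible since $E_1-F=\tfrac12v_1^2\ge 0$.)

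It therefore remains to exclude the velocity crossing, and this is where the functional $P$ and the two hypotheses enter. From $(H_2)$ together with $r'<0$ one reads off that $P_i$ is strictly increasing in $s$ on $\{s>\beta\}$, and at any turning value $c$ the formula for $P$ collapses to $P_i(c)=-2r_i^N(c)F(c)$. In case (i) I would exploit the strict ordering $P_2(s_I)<P_1(s_I)<0$ and the monotonicity of each $P_i$ on $\{s>\beta\}$ to keep $P_1<0$ throughout the descent above $\beta$ and to carry a comparison of $P_1,P_2$ down to any candidate crossing value, where the relation forced by $r_1>r_2$ and $v_1=v_2$ must contradict the propagated inequality. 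In case (ii), where $s_I\le\beta$ and $F\le 0$ on the whole range $(\bar s,s_I]$ so that the monotonicity of $P$ from $(H_2)$ is unavailable, I would instead run the comparison by hand, using the sign of $F$ and of $f$ — recalling that, since $u_2''\ge 0$ at its minimum, the turning value satisfies $\bar s\le b$.

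The genuine obstacle is precisely the exclusion of the velocity crossing across the threshold $s=\beta$: a short computation shows that at a hypothetical crossing the $f$-term cancels in $\tfrac{d}{ds}(v_1-v_2)$, leaving $(N-1)(1/r_2-1/r_1)>0$, so crossings are transversal and cannot be ruled out by a local sign argument alone — a global monotone quantity is needed. Above $\beta$ that quantity is $P$, but $P$ loses its monotonicity below $\beta$, exactly the region $(b,\beta)$ where the gap $v_1-v_2$ tends to contract. The splitting into cases (i) and (ii) is the device that localizes this difficulty: case (i) uses $P$ to control the descent to $\beta$, while case (ii) treats the sub-critical regime $s\le\beta$ directly via the sign of $F$. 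Carrying out this last exclusion cleanly is the step I expect to be the crux of the argument.
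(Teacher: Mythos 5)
Your reduction is sound as far as it goes: passing to the inverses $r_i(s)$, noting that $\frac{d}{ds}(r_1-r_2)=1/v_1-1/v_2<0$ while $v_2<v_1<0$, so that the ordering $r_1>r_2$ can only fail after a crossing of the derivatives, and observing that if no crossing occurs down to $\bar s$ then $u_1'$ must vanish at some $t\ge\bar s$ — all of this matches how the paper closes the argument. But the heart of the proposition, the exclusion of the crossing, is exactly what you leave as a sketch, and the sketch as written would not go through. In case (i) you propose to carry $P_1,P_2$ down to a crossing $u_1'=u_2'$ and contradict the propagated inequality there; but at such a point (with $r_1>r_2$ and $v_1=v_2=v<0$) one has $P_1-P_2=-2N\frac{F}{f}(s)\,v\,(r_1^{N-1}-r_2^{N-1})-v^2(r_1^N-r_2^N)-2F(s)(r_1^N-r_2^N)$, whose terms have mixed signs for $s>\beta$, so no contradiction is available — as you yourself half-recognize when you say a global monotone quantity is needed, without producing it. The paper tracks a different crossing: the first $s_0>\beta$ where $\frac{r_1}{r_1'}(s_0)=\frac{r_2}{r_2'}(s_0)$, i.e.\ where $r_1u_1'=r_2u_2'$ (this event precedes any velocity crossing, since $|v_1|=|v_2|$ with $r_1>r_2$ would mean $ru'$ crossed strictly earlier). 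It then introduces the scaling constant $A=\bigl(r_2(s_0)/r_1(s_0)\bigr)^{N-2}<1$, shows that $\bigl(\frac{r_2^{N-1}r_1'}{r_1^{N-1}r_2'}\bigr)_s>0$ so this ratio stays $\ge A$ on $[s_0,s_I]$, deduces from $(H_2)$ that $\frac{d}{ds}(AP_1-P_2)\le0$ there, hence $(AP_1-P_2)(s_0)\ge(AP_1-P_2)(s_I)>(P_1-P_2)(s_I)>0$ (using $P_1(s_I)<0$), while direct evaluation at the crossing gives $(AP_1-P_2)(s_0)=-2F(s_0)\,r_2^{N-2}(r_1^2-r_2^2)<0$ since $F>0$ above $\beta$ — the contradiction. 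This scaled functional $AP_1-P_2$ is the missing idea; separate monotonicity of $P_1$ and $P_2$, which is all you invoke, does not substitute for it.

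Below $\beta$ — case (ii), and the continuation of case (i) past $\beta$ — the paper does not argue ``by hand'' from the signs of $F$ and $f$, but with a second functional you never introduce: $W_i(s)=r_i(s)\sqrt{(u_i'(r_i(s)))^2+2F(s)}$, from Franchi--Lanconelli--Serrin and Serrin--Tang. For $F(s)<0$ both $x\mapsto x/\sqrt{x^2+2F}$ and $x\mapsto 1/(x\sqrt{x^2+2F})$ are decreasing in $x>0$, so $W_1-W_2$ keeps its sign as long as $|u_1'|\le|u_2'|$, and at a putative first derivative-crossing $s_1$ the inequality $W_1(s_1)<W_2(s_1)$ forces $r_1(s_1)<r_2(s_1)$, contradicting the maintained ordering. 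Note also that your bookkeeping propagates the wrong ordered quantity for the handoff at $s=\beta$: since $F(\beta)=0$, one has $W_i(\beta)=r_i(\beta)|u_i'(r_i(\beta))|$, so the $W$-comparison must be seeded with $r_1|u_1'|<r_2|u_2'|$ at $\beta$ — precisely what the $AP_1-P_2$ step delivers — and this does \emph{not} follow from the two facts you carry, namely $|u_1'|<|u_2'|$ and $r_1>r_2$. So both halves of the crux (the scaled $P$-comparison above $\beta$ and the $W$-comparison below it) are absent, and the plan as stated cannot be completed without them.
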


\begin{proof} From Proposition \ref{finalI}.

\end{proof}

 \begin{proposition}\label{P}

Let f satisfy $(H_1)$-$(H_4 )$ and $u_1$, $u_2$ be two solutions of \eqref{ivp},  such that $u_1(0)= \alpha_1$ $u_2(0)= \alpha_2$   $u'_1(0)= u'_2(0)=0$,  with    $ b<\alpha_1,  \alpha_2< \alpha_*$. Then $u_1$ and $u_2$ intersect each other once and only once in $(0,\min\{R(\alpha_1),R(\alpha_2)\})$. Such intersection occurs at a point  $u_1=u_2>b$.

\end{proposition}

\begin{proof}
From  Proposition \ref{SI1} and then Proposition \ref{finalI}.

\end{proof}

\section{Analysis of the solutions of \eqref{ivp} after crossing the value $\alpha_*$}\label{sec f1}

We begin our analysis by studying the behavior of solutions to \eqref{ivp} after they reach $u(r_0)=\alpha_*$. Since for $u\leq\alpha_*$ the function $f=f_1$, we can compare them with the solution of the better understood case where $f$ satisfies properties $(H_1)$-$(H_4)$.  Our goal is to prove the following proposition.

\begin{proposition}\label{EnP-f1}  Let $f_1$ satisfying the properties $(H_1)$-$(H_4)$,   and let $u$ be solution to
\begin{eqnarray}\label{eq rad}
u''+\frac{N-1}{r}u'+f(u)=0
\end{eqnarray}
 that reaches the value $\alpha_*$ in $(H_4)$ at some $r_*\in(0,R)$.
Then given $\bar a,\ \bar b\in\mathbb R^+$, with  $ \bar b<(\alpha_*-b) (N-2) $, if $R>0$ is sufficiently small and $\bar a\le r_*|u'(r_*)|\le \bar b$, it holds that $u(r)>0$ for all  $r\ge r_*$.

\end{proposition}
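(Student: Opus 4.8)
The plan is to reduce the claim to showing that $u$ attains a positive local minimum (turns around) \emph{before} reaching the value $0$, and then to rule out any later sign change by an energy argument. Throughout the relevant range $u\le\alpha_*$ we have $f=f_1$, so $u$ solves the $f_1$-equation with data $u(r_*)=\alpha_*$, $u'(r_*)<0$ at $r_*>0$. First I would record the monotonicity: as long as $u\in(b,\alpha_*)$ one has $f_1(u)>0$, so $(r^{N-1}u')'=-r^{N-1}f_1(u)<0$ keeps $r^{N-1}u'$ negative and $u$ strictly decreasing; hence $u$ sweeps through every value of $(b,\alpha_*)$, in particular past $\beta$ and down toward $b$, and its first critical point $r_m$ (if any) necessarily occurs at a value $m\le b$.

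The decisive reduction is an energy estimate. Setting $E=\tfrac12(u')^2+F_1(u)$, one has $E'=-\tfrac{N-1}{r}(u')^2\le0$, so $E$ is nonincreasing. If $u$ turns around at a first critical point with value $m\in(0,b]$, then $E(r_m)=F_1(m)<0$ (recall $F_1<0$ on $(0,\beta)$), and since $E$ only decreases thereafter, $u$ can never subsequently reach $0$ (which would force $E=\tfrac12(u')^2\ge0$) nor $\alpha_*$ (which would force $E\ge F_1(\alpha_*)>0$, as $\alpha_*>\beta$). Thus it suffices to prove that the first critical point occurs at a strictly positive value, i.e. that $u$ does not decrease monotonically to $0$.

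To produce that turning point I would compare $u$ with the explicit solution $u_h(r)=\alpha_*+\frac{r_*u'(r_*)}{N-2}\bigl(1-(r_*/r)^{N-2}\bigr)$ of the homogeneous equation $u_h''+\frac{N-1}{r}u_h'=0$ sharing the data at $r_*$. Its infimum is $L:=\alpha_*-\frac{r_*|u'(r_*)|}{N-2}$, and the hypothesis $r_*|u'(r_*)|\le\bar b<(\alpha_*-b)(N-2)$ gives exactly $L>b$; this is the origin of the constant $(\alpha_*-b)(N-2)$. For $R$ (hence $r_*$) small the descent of $u$ from $\alpha_*$ occurs in a thin layer near $r_*$, and on compact subsets of $(0,\infty)$ the solution $u$ converges to the $f_1$-solution issuing from the value $L_\infty=\alpha_*-\frac{\ell}{N-2}$ with vanishing weighted slope at the origin, where $\ell=\lim r_*|u'(r_*)|\in[\bar a,\bar b]$. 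The two bounds place $L_\infty$ strictly inside $(b,\alpha_*)$ — the upper bound $\bar b$ keeps $L_\infty>b$ and the lower bound $\bar a$ keeps $L_\infty\le\alpha_*-\frac{\bar a}{N-2}<\alpha_*$ — so the limit is a genuine $\mathcal P$-solution $w_{L_\infty}$, which by Proposition \ref{open-sets} turns around at a positive minimum $m_{L_\infty}<b$.

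Finally I would transfer this turning behaviour to $u$ itself by Proposition \ref{final1}. Choose $\alpha\in(L_\infty,\alpha_*)$ close to $\alpha_*$, so that the $\mathcal P$-solution $w_\alpha$ has minimum $m_\alpha<b$; by the single-crossing property of Proposition \ref{P}, at low values $w_\alpha$ lies below and is steeper than $w_{L_\infty}$, hence (for $r_*$ small, where $u$ is close to $w_{L_\infty}$) steeper than $u$ at some value $s_I\in(m_{L_\infty},b]\subset(0,\beta]$. Applying Proposition \ref{final1} in case ii) with $u_1=u$ the shallower solution and $u_2=w_\alpha$ the steeper one that turns around at $m_\alpha<s_I$ yields a point $t\ge m_\alpha>0$ at which $u'=0$; that is, $u$ turns around at a positive value, and the energy step above finishes the proof. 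The main obstacle is precisely the control invoked in the previous paragraph: the effect of $f_1$ accumulates over the $O(1)$-length interval where $u\in(b,\alpha_*)$, so the crude integral bounds on $r^{N-1}u'$ are not sharp enough, and one must instead establish the boundary-layer convergence $u\to w_{L_\infty}$ — equivalently, verify the steepness (or $P$-ordering) hypotheses of Proposition \ref{final1} — uniformly as $R\to0$. This is exactly where both bounds $\bar a\le r_*|u'(r_*)|\le\bar b$ and the smallness of $R$ are indispensable.
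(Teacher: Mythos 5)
Your overall scaffolding --- reduce to producing a critical point of $u$ at a positive value via the decreasing energy $E=\tfrac12(u')^2+F_1(u)$, then force such a turning point by comparison with a $\mathcal P$-solution --- is coherent, and the energy reduction is correct (any interior minimum of $u$ has value $m\le b$, and $F_1(m)<0$ there, so $u$ can never afterwards reach $0$). But the proof has a genuine gap at its center, and you say so yourself: the boundary-layer convergence of $u$ to the regular solution $w_{L_\infty}$, in a $C^1$ sense on compact subsets of $(0,\infty)$, uniformly as $R\to0$ while $r_*|u'(r_*)|\in[\bar a,\bar b]$, is asserted, used quantitatively (you need it down to the value level $b$ in order to transfer the strict weighted-slope gap $c>0$ of Proposition \ref{finalI} from the pair $(w_{L_\infty},w_\alpha)$ to the pair $(u,w_\alpha)$), and then flagged as \lq\lq the main obstacle\rq\rq. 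This is not a routine continuous-dependence statement, because the initial point degenerates ($r_*\to0$) while $r_*|u'(r_*)|$ stays of order one, so $u$ drops by the $O(1)$ amount $\ell/(N-2)$ across a vanishing layer; establishing such uniform control is essentially the content of the proposition itself. The paper's proof is organized precisely to avoid this limit: it substitutes the elementary estimates of Lemma \ref{u}, the two-intersection argument of Lemma \ref{vw} (where the superlinearity $(H_3)$, through the identity involving $\frac{f(s)}{s-b}$, produces a contradiction against a constant $K$ independent of $u$, using that $r^{N-1}|u'|=O(R^{N-2})$ at the relevant values), and the $P$-functional ordering of Lemmas \ref{s_0}--\ref{n-1}, after which Proposition \ref{final1} applies. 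Your proposal therefore defers, rather than completes, the hardest step.

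Separately, your final step invokes Proposition \ref{final1} outside its hypotheses: it requires an actual intersection $u_1(r_I)=u_2(r_I)=s_I$ at a common radius, with $u_2$ steeper there, whereas you apply it from a slope comparison \lq\lq at some value $s_I\in(m_{L_\infty},b]$\rq\rq, where $u$ and $w_\alpha$ generally sit at different radii. The intersections you can actually produce occur at values greater than $b$: by Propositions \ref{SI1} and \ref{P} the unique crossing of $w_\alpha$ with $w_{L_\infty}$ has value $>b$, and the crossing of $u$ with $w_\alpha$ lies near it; that value may well exceed $\beta$, in which case case ii) is unavailable and you would need exactly the ordering $P_2(s_I)<P_1(s_I)<0$ of case i) --- which the paper manufactures from the existence of \emph{two} intersections in Lemmas \ref{s_0} and \ref{n-1}. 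Your intended use at values $\le\beta$ is repairable, since the $W$-functional argument inside the proof of Proposition \ref{finalI} only needs, at a common value $s_I\le\beta$, the orderings $r_u(s_I)>r_{w_\alpha}(s_I)$ and $r_u(s_I)|u'|<r_{w_\alpha}(s_I)|w_\alpha'|$, which give $W_1(s_I)<W_2(s_I)$ because $F(s_I)\le0$; but you would have to state and prove this variant, and both orderings again rest on the unproven uniform convergence. Until that convergence estimate is established, with explicit dependence on $\bar a$, $\bar b$ and $R$ of the kind that Lemmas \ref{u} and \ref{vw} supply, the argument is a plausible program rather than a proof.
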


We will use the following lemma to prove this proposition.

\begin{lemma}\label{u}
Under the assumptions of Proposition \ref{EnP-f1}, let $\delta:=\min\{\frac{\alpha_*- b}{2} -\frac{\bar b}{2(N-2) },\frac{\bar a}{2(N-2)} \}$, then

 i) If $u(r)\in(\alpha_*-\delta,\alpha_*)$, then $0<r<2R$ and $r^{N-1}|u'(r)|\le CR^{N-2}(1+R^2)$, where $C=C(||f_1||_\infty,\bar b)$.

ii) Let $ a_2 $  such that $(\alpha_*-a_2)= \left(\frac{(\alpha_*-b)}{2}+\frac{\bar b}{2(N-2) }\right) $ and $r_a$ such that $u(r_a)=a_2$ then

$$r_a \geq \sqrt{\left(\frac{(\alpha_*-b)}{2}-\frac{\bar b}{2(N-2) }\right) \frac{2N}{||f_1||_\infty }}:=C_2. $$
Observe that $2\delta< (\alpha_*-a_2) < (\alpha_*-b)$, see Figure \ref{fig-2}.

\end{lemma}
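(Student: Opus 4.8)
The plan is to exploit that throughout both statements the relevant values of $u$ lie in $(b,\alpha_*]$, where $f=f_1$ and $f_1>0$. Indeed, the hypothesis $\bar b<(\alpha_*-b)(N-2)$ forces $\delta<\tfrac{\alpha_*-b}{2}$, so $\alpha_*-\delta>\tfrac{\alpha_*+b}{2}>b$, while $a_2-b=\tfrac{\alpha_*-b}{2}-\tfrac{\bar b}{2(N-2)}>0$. Writing the equation in divergence form $(r^{N-1}u')'=-r^{N-1}f_1(u)<0$ shows that $r^{N-1}u'$ is strictly decreasing as long as $u>b$; since $u'(r_*)<0$, the solution $u$ is strictly decreasing and, equivalently, $r^{N-1}|u'(r)|$ is strictly increasing on the ranges considered. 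These two monotonicity facts, used in the two opposite directions, drive both parts.

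For (i) I would first prove $r<2R$. Integrating $(r^{N-1}u')'<0$ gives $s^{N-1}|u'(s)|\ge r_*^{N-1}|u'(r_*)|$ for $s\ge r_*$, hence
$$u(r_*)-u(r)=\int_{r_*}^{r}|u'(s)|\,ds\ge r_*^{N-1}|u'(r_*)|\int_{r_*}^{r}\frac{ds}{s^{N-1}}=\frac{r_*|u'(r_*)|}{N-2}\Bigl(1-(r_*/r)^{N-2}\Bigr).$$
If $u$ stayed in $(\alpha_*-\delta,\alpha_*)$ up to $r=2R$, then $r_*<R$ gives $(r_*/2R)^{N-2}\le 2^{-(N-2)}\le\tfrac12$, and with $r_*|u'(r_*)|\ge\bar a$ the right-hand side is at least $\tfrac{\bar a}{2(N-2)}\ge\delta$, forcing $u(2R)\le\alpha_*-\delta$, a contradiction. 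The bound then comes from integrating the equation the other way:
$$r^{N-1}|u'(r)|=r_*^{N-1}|u'(r_*)|+\int_{r_*}^{r}s^{N-1}f_1(u(s))\,ds\le R^{N-2}\bar b+\|f_1\|_\infty\frac{r^N}{N},$$
where I use $r_*<R$ and $r_*|u'(r_*)|\le\bar b$; substituting $r<2R$ bounds the last term by $\tfrac{2^N\|f_1\|_\infty}{N}R^{N-2}R^2$, giving $r^{N-1}|u'(r)|\le CR^{N-2}(1+R^2)$ with $C=C(\|f_1\|_\infty,\bar b)$.

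For (ii) I would estimate the descent in the reverse direction. Bounding $\int_{r_*}^{s}t^{N-1}f_1(u)\,dt\le\|f_1\|_\infty s^N/N$ in the integrated equation yields $|u'(s)|\le r_*^{N-1}|u'(r_*)|\,s^{-(N-1)}+\tfrac{\|f_1\|_\infty}{N}s$, so
$$\alpha_*-a_2=\int_{r_*}^{r_a}|u'(s)|\,ds\le\frac{r_*|u'(r_*)|}{N-2}+\frac{\|f_1\|_\infty}{2N}r_a^2\le\frac{\bar b}{N-2}+\frac{\|f_1\|_\infty}{2N}r_a^2.$$
The definition $\alpha_*-a_2=\tfrac{\alpha_*-b}{2}+\tfrac{\bar b}{2(N-2)}$ is chosen precisely so that subtracting $\tfrac{\bar b}{N-2}$ leaves $\tfrac{\alpha_*-b}{2}-\tfrac{\bar b}{2(N-2)}$, whence $r_a^2\ge\tfrac{2N}{\|f_1\|_\infty}\bigl(\tfrac{\alpha_*-b}{2}-\tfrac{\bar b}{2(N-2)}\bigr)$, i.e. $r_a\ge C_2$. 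The displayed observation $2\delta<\alpha_*-a_2<\alpha_*-b$ is then routine from the definitions together with $\bar b<(\alpha_*-b)(N-2)$ and $\bar a\le r_*|u'(r_*)|\le\bar b$.

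I expect the main obstacle to be establishing $r<2R$ in (i): one must turn the lower bound $r_*|u'(r_*)|\ge\bar a$ into a genuine lower bound on the descent rate of $u$, which is delicate because $|u'|$ itself may be small when $r_*$ is small. The point that makes it work is the monotonicity of the weighted quantity $r^{N-1}|u'|$ combined with the factor $1-(r_*/r)^{N-2}$ remaining bounded below by $\tfrac12$ at $r=2R$, which in turn uses $r_*<R$. Everything else is bookkeeping with the two one-sided estimates $0<f_1(u)\le\|f_1\|_\infty$ on the range $(b,\alpha_*]$.
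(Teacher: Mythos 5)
Your proposal is correct and takes essentially the same route as the paper: both rest on the integrated identity $r^{N-1}u'(r)=r_*^{N-1}u'(r_*)-\int_{r_*}^{r}t^{N-1}f_1(u(t))\,dt$, using the resulting lower bound $u(r_*)-u(r)\ge\frac{r_*|u'(r_*)|}{N-2}\bigl(1-(r_*/r)^{N-2}\bigr)$ together with $\bar a\le r_*|u'(r_*)|$ and $\delta\le\frac{\bar a}{2(N-2)}$ to force $r<2R$ in (i), and the complementary upper bound $\alpha_*-u(r)\le\frac{\bar b}{N-2}+\frac{\|f_1\|_\infty}{2N}r^2$ to get $r_a\ge C_2$ in (ii). Your contradiction framing of $r<2R$ and your explicit monotonicity remarks (that $u>b$ keeps $f_1>0$, so $u$ is decreasing and $r^{N-1}|u'|$ increasing) are only presentational differences from the paper's direct inequality manipulation, and if anything make explicit a point the paper leaves implicit.
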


\begin{figure}[h]
  \includegraphics[scale=1]{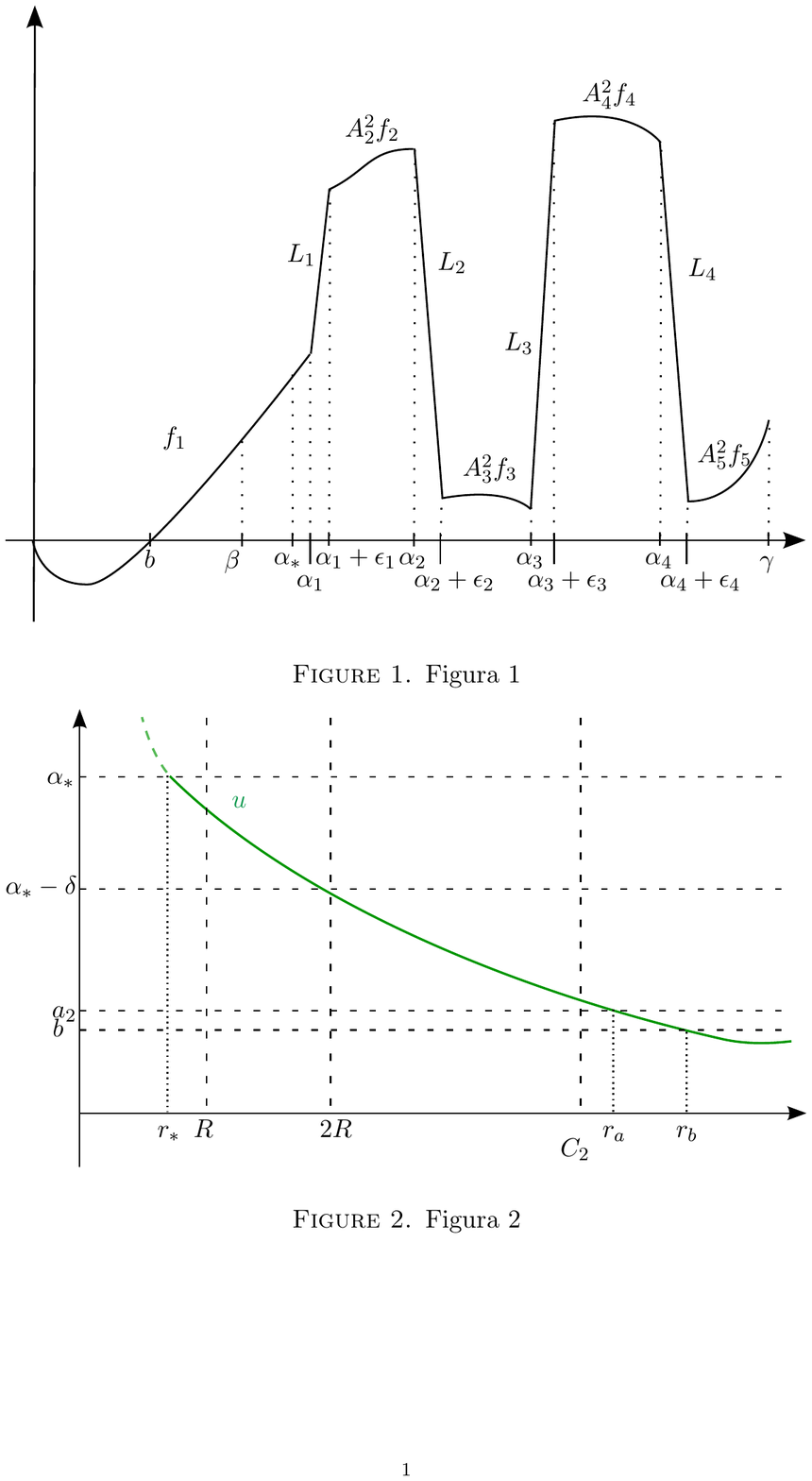}
  \caption{}\label{fig-2}
\end{figure}

\begin{proof}	
	
\noindent  We first prove i). We have
	$$r^{N-1}u'(r)=r_*^{N-1}u'(r_*)-\int_{r_*}^rt^{N-1}f_1(u(t))dt\le
r_*^{N-1}u'(r_*),$$
hence dividing by $r^{N-1}$ and integrating over $(r_*,r)$, we find that
$$u(r)-u(r_*)\le \frac{r_*^{N-1}u'(r_*)}{2-N}(r^{2-N}-r_*^{2-N})$$
implying
$$\delta\ge u(r_*)-u(r)\ge \frac{r_*|u'(r_*)|}{N-2}\left(1-\left(\frac{r_*}{r}\right)^{N-2}\right).$$
From the conditions $\bar a\le r_*|u'(r_*)|$ and the definition of $\delta$ we obtain
$$1-\left(\frac{r_*}{r}\right)^{N-2}\le \frac{1}{2}$$
hence
$$\frac{r_*}{r}\ge \Bigl(\frac{1}{2}\Bigr)^{1/(N-2)}\ge\frac{1}{2}$$
and therefore $r\le 2r_*\le 2R$.

On the other hand, if $u\in[\alpha_*-\delta,\alpha_*]$, then
$$r^{N-1}|u'(r)|\le r_*^{N-1}|u'(r_*)|+\int_{r_*}^rt^{N-1}f_1(u(t))dt\le \bar b R^{N-2}+||f_1||_\infty\frac{(2R)^N}{N}$$
and thus i) follows.

 Proof of ii) As
$$r^{N-1}|u'(r)|\le r_*^{N-1}|u'(r_*)|+\int_{r_*}^rt^{N-1}f_1(u(t))dt$$
$$|u'(r)|\le \left(\frac {r_*}{r}\right)^{N-1}|u'(r_*)|+||f_1||_\infty \frac{r}{N}$$

Integrating over $[r_*,r]$,
$$ (\alpha_* -u(r)) \leq  \frac{r_*|u'(r_*)|}{N-2}\Bigl(1-\left(\frac {r_*}{r}\right)^{N-2}\Bigr) +||f_1||_\infty \frac{r^2}{2N} $$
$$ \leq \frac{\bar b}{N-2}+||f_1||_\infty \frac{r^2}{2N} $$

Hence, for $r=r_a$  $$ r_a   \geq \sqrt{\left(\frac{(\alpha_*-b)}{2}-\frac{\bar b}{2(N-2) }\right) \frac{2N}{||f_1||_\infty }}=C_2.$$

\end{proof}

\subsection*{Proof of Proposition \ref{EnP-f1}}

We will do this proof in several steps

\begin{lemma}(Step 1)\label{vw}
Under the assumptions of Proposition \ref{EnP-f1},  let  $\delta:=\min\{\frac{\alpha_*- b}{2} -\frac{\bar b}{2(N-2) },\frac{\bar a}{2(N-2)} \}$ if $R$ is sufficiently small and $r_*< R$ then there is a solution  $v$  to \eqref{ivp} with $r_0=0$, $\bar\alpha=0$  and $\alpha\in[\alpha_*-\delta/2,\alpha_*-\delta/4]$ such that $u$ intersects $v$ at least twice at values greater than $b$.
\end{lemma}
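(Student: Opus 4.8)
The goal of Step 1 is to produce a solution $v$ emanating from a strictly smaller initial height $\alpha\in[\alpha_*-\delta/2,\alpha_*-\delta/4]$ (so that $v$ stays entirely in the regime where $f=f_1$ near the top, and in particular $\alpha<\alpha_*$ puts $v$ in the class $\mathcal P$ studied in Proposition \ref{open-sets}) which the given solution $u$ must cross at least twice above the level $b$. The natural strategy is a shooting/comparison argument: I would compare $u$, which arrives at the level $\alpha_*$ at radius $r_*$ with a controlled slope ($\bar a\le r_*|u'(r_*)|\le\bar b$ and $r_*<R$ small), against the one-parameter family of ground-state-region solutions $v(\cdot,\alpha)$ with $v'(0)=0$. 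Since $\alpha<\alpha_*$, Proposition \ref{open-sets}(ii) tells me $v\in\mathcal P$, so $v$ remains positive and well-behaved, and the comparison geometry of Proposition \ref{P} becomes available.

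\textbf{Key steps.}
First I would fix $\alpha\in[\alpha_*-\delta/2,\alpha_*-\delta/4]$ and record, using Lemma \ref{u}, the quantitative picture of $u$ near its top: as $R\to 0$, the radius $r$ at which $u$ reaches any level in $(\alpha_*-\delta,\alpha_*)$ satisfies $0<r<2R$ while simultaneously $r^{N-1}|u'(r)|\le CR^{N-2}(1+R^2)$, i.e. $u$ descends through the band just below $\alpha_*$ almost vertically and within a tiny radial window. Second, I would exhibit the \emph{first} intersection: because $u$ passes through the level $\alpha$ at some radius $r<2R$ that is arbitrarily small as $R\to 0$, whereas the comparison solution $v$ starts flat at height $\alpha$ at $r=0$ and decreases only on the scale dictated by $f_1$ (independent of $R$), for $R$ small the two graphs $r\mapsto u(r)$ and $r\mapsto v(r)$ must cross near the very top. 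Third, I would locate the \emph{second} intersection using Lemma \ref{u}(ii): the solution $u$ cannot reach the intermediate level $a_2$ (with $\alpha_*-a_2=\tfrac{\alpha_*-b}2+\tfrac{\bar b}{2(N-2)}$, so $a_2>b$) before the fixed radius $r_a\ge C_2>0$, a bound \emph{uniform} in $R$. Thus on the radial interval roughly $(2R,\,C_2)$ the solution $u$ lies above $a_2$ and descends slowly, while $v$—having started lower—has in the meantime dropped below $u$; quantifying that $v$ sits below $u$ at an intermediate radius but above $u$ again further out (or vice versa) forces a sign change of $u-v$, giving the second crossing, both crossings occurring at heights exceeding $b$. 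Throughout, continuous dependence on initial data and the uniqueness from Proposition \ref{open-sets} keep the family $v(\cdot,\alpha)$ under control.

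\textbf{Main obstacle.}
The delicate point is the ordering argument that pins down the second crossing: I must show that after $u$ and $v$ first meet near the top, their difference changes sign again \emph{before} either solution leaves the comparison window. This is where the uniform-in-$R$ lower bound $r_a\ge C_2$ from Lemma \ref{u}(ii) is essential—it decouples the fixed geometry of $v$ (governed only by $f_1$ and $\alpha$) from the shrinking top-band geometry of $u$. I expect the cleanest route is to invoke the one-intersection structure of Proposition \ref{P}: two solutions launched from distinct sub-$\alpha_*$ heights cross exactly once above $b$, so by comparing $v$ with the translated/shooting data of $u$ one controls the relative positions and counts the crossings. The danger to guard against is a crossing occurring at a height at or below $b$, where the sign of $f_1$ flips and the monotonicity tools of $(H_2)$–$(H_3)$ no longer apply; the choice of $\delta$ and $a_2>b$ is precisely what keeps both intersections strictly above $b$.
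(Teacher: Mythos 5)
Your first-intersection step is essentially the paper's: $u$ traverses the band $(\alpha_*-\delta,\alpha_*)$ within radius $2R$ (Lemma \ref{u}(i)), while any solution $v$ with $v'(0)=0$, $v(0)=\alpha<\alpha_*$ needs radius at least $\sqrt{N\delta/(2\|f_1\|_\infty)}>2R$ to drop to $\alpha_*-\delta$, so the graphs must cross before $2R$. That part is fine. The gap is in the second intersection, where you offer no actual mechanism: after the first crossing one has $u<v$, and your sentence that $v$ ``has in the meantime dropped below $u$'' merely restates what must be proved. Your fallback, invoking Proposition \ref{P} to compare $v$ ``with the translated/shooting data of $u$,'' does not work: Proposition \ref{P} requires both solutions to have zero derivative at $r=0$ with initial values in $(b,\alpha_*)$, whereas $u$ enters the $f_1$-region at $r_*$ with strictly negative slope ($r_*|u'(r_*)|\ge\bar a>0$), coming from a regime where $f\ne f_1$; no hypothesis of \ref{P} (or of \ref{final1}, which needs the $P$-functional ordering you only obtain \emph{after} two intersections, in Steps 2--4) applies to the pair $(u,v)$ at this stage. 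Note also that the lemma is existential for a reason your fixed-single-$\alpha$ framing cannot capture.

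The paper's actual device is to run \emph{two} comparison solutions simultaneously: $v$ from $\alpha_*-\delta/4$ and $w$ from $\alpha_*-\delta/2$. By Proposition \ref{P} they cross each other exactly once, at $\bar r\ge 2R$, so on a fixed window they are genuinely separated. Assuming neither is crossed by $u$ a second time before $u$ reaches $b$, one multiplies the equations crosswise by $(v-b)$ and $(u-b)$, subtracts and integrates; the superlinearity $(H_3)$ makes the integrand $(u-b)(v-b)\bigl(\frac{f(u)}{u-b}-\frac{f(v)}{v-b}\bigr)$ negative, and --- this is the decisive point --- \emph{summing} the identities for $v$ and for $w$ replaces the $u$-dependent integrand by the fixed function $\bigl|\frac{f(v)}{v-b}-\frac{f(w)}{w-b}\bigr|$ on the fixed interval $(C_2/2,C_2)$ from Lemma \ref{u}(ii), yielding a uniform negative bound $-K$ independent of $u$ and $R$. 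With a single comparison solution this fails: as $R\to0$ the solution $u$ can hug $v$, and the superlinearity integral has no uniform lower bound. The boundary terms are $O(R^{N-2})$ by Lemma \ref{u}(i), so taking $R$ small forces a contradiction, and at least one of $v,w$ (whence the ``there is a solution $v$'' in the statement) is intersected twice above $b$. Your proposal uses $(H_3)$ nowhere in earnest, and without it --- or the two-solution summation trick that makes the contradiction constant uniform --- the second crossing cannot be extracted from the geometry you describe.
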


\begin{proof}		
 Assume that $$2R<\sqrt{\frac{N\delta}{8||f_1||_\infty}}$$ we will first prove that  a first intersection  occurs for any such solution $v$  at some $r_1<2R$. Note that $2R<C_2/2$ where $C_2$ is the constant in lemma \ref{u}. By integration we have that
$$r^{N-1}|v'(r)|=\int_0^rt^{N-1}f_1(v(t))dt\le ||f_1||_\infty\frac{r^N}{N},$$
implying $|v'(r)|\le ||f_1||_\infty\frac{r}{N},$ and integrating once more, $\alpha-v(r)\le ||f_1||_\infty\frac{r^2}{2N}$ and thus when $v(r)=\alpha_*-\delta$, we have
$r^2\ge \frac{\delta N}{||f_1||_\infty}	$ implying
$$r\ge \sqrt{\frac{\delta N}{2||f_1||_\infty}}> 2R.$$
Since by Lemma \ref{u} when $u(r)=\alpha_*-\delta$ one has $r<2R$, the solutions must have intersected at some $r_1<2R<C_2/2$, see $r_v$ or $r_w$ in Figure \ref{fig-3}.

\begin{figure}[h]
	\includegraphics[scale=1]{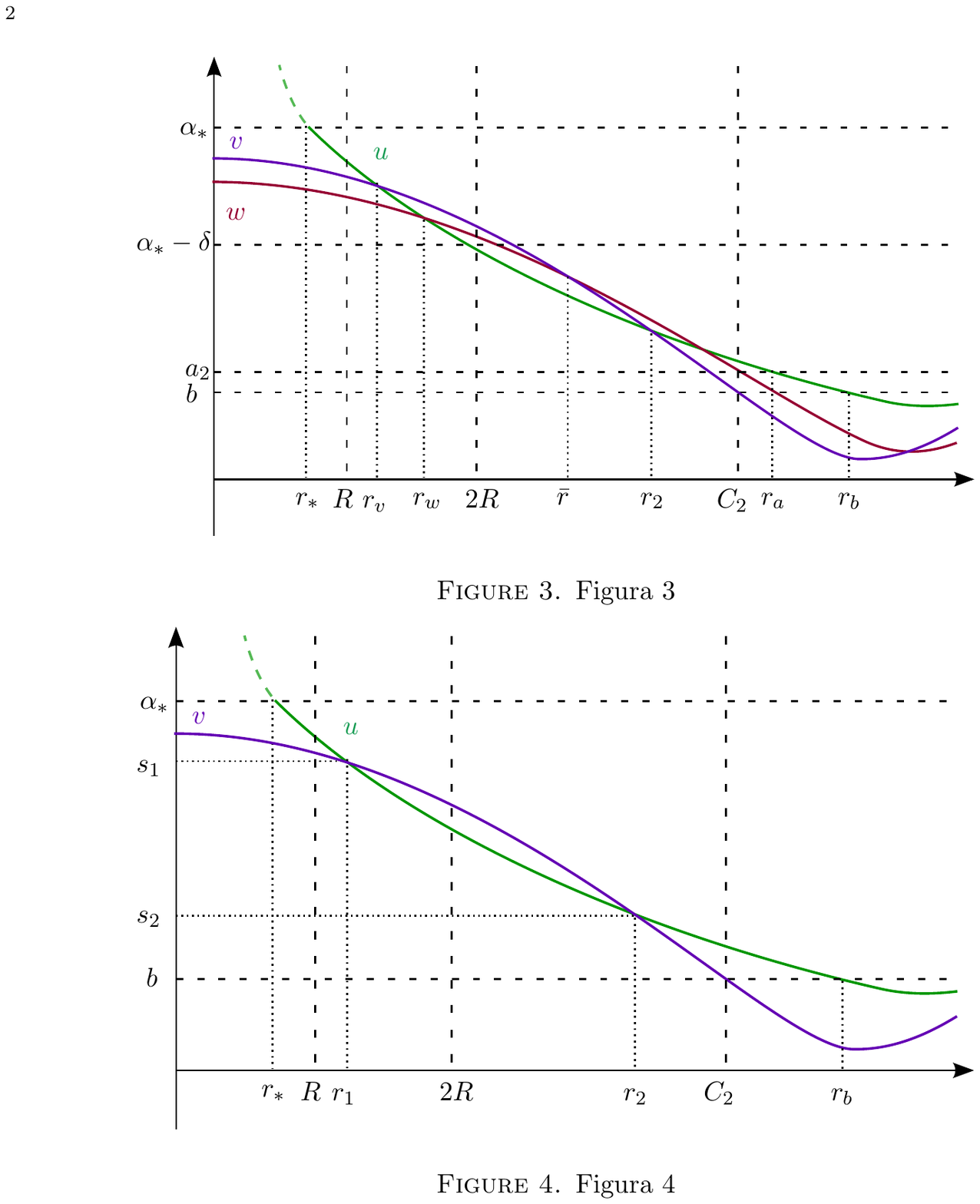}
	\caption{}\label{fig-3}
\end{figure}

We will see next that $u$ must  intersect at least once more some of these solutions while they are greater than $b$. To this end we set $v$ the solution to \eqref{ivp} such that $r_0=0$, $\bar\alpha=0$ and $\alpha=\alpha_*-\delta/4$ and $w$ the solution to \eqref{ivp} such that $r_0=0$, $\bar\alpha=0$ and $\alpha=\alpha_*-\delta/2$.\\
By Lemma \ref{P}, these two solutions, $v$ and $w$ intersect each other once and only once before reaching the value $b$, hence
 $v(r)>w(r)$ for $r\in(0,\bar r)$ and $v(\bar r)=w(\bar r)$ for some $\bar r>0$, see Figure \ref{fig-3}.
Using again the equation and integrating over $(0,\bar r)$ we have
$$r^{N-1}|(v-w)'(r)|=\int_0^rt^{N-1}(f_1(v(t))-f_1(w(t)))dt\le ||f_1||_\infty\frac{r^N}{N},$$
$$(v-w)'(r)\le ||f_1||_\infty\frac{r}{N},$$

$$ \frac{\delta}{4} \le ||f_1||_\infty\frac{\bar r^2}{2N}, $$
implying $$\bar r\geq \sqrt{\frac{N\delta}{2||f_1||_\infty}} \geq 2R>r_1.$$

Let now $r_b$, $r_v$ and $r_w$ be such that $u(r_b)=b$, $u(r_v)=v(r_v)$ and $u(r_w)=w( r_w)$.

Suppose now that neither $v$, nor $w$ intersect $u$ again before $r_b$. So $u<v$ in $(r_v,r_b]$ and   $u<w$ in $( r_w,r_b]$. We will get a contradiction, and so prove our result.

  By multiplying the equation satisfied by $u$ by $(v-b)$ and the equation satisfied by $v$ by $(u-b)$, then subtracting and integrating the difference obtained over $(r_v,r_b)$ it can be easily verified that
$$-r_v^{N-1}(v-u)'(r_1)(u(r_v)-b)-(v-u)(r_b)r_b^{N-1}u'(r_b)=$$
$$\int_{r_v}^{r_b}t^{N-1}\big((f(u)-f(v))(u-b)+f(u)(v-u)\big)dt.$$
The integrand in this last integral is negative because of the super-linear condition $(H_3)$ and the fact that $u<v$ in the interval:
$$(f(u)-f(v))(u-b)+f(u)(v-u)=(u-b)(v-b)\left(\frac{f(u)}{u-b}-\frac{f(v)}{v-b}\right)<0.$$

 Hence, if  $\bar{\bar r} =min\{ \bar r,r_a\}$, where $u(r_a)=a_2$ as in Lemma \ref{u}, then
 $$-r_1^{N-1}(v-u)'(r_v)(u(r_v)-b)\leq \int_{r_v}^{r_b}t^{N-1}\big((f(u)-f(v))(u-b)+f(u)(v-u)\big)dt $$.
$$\leq \int_{r_v}^{\bar{\bar r}}t^{N-1}(u-b)(v-b)\left(\frac{f(u)}{u-b}-\frac{f(v)}{v-b}\right)dt \leq \int_{ r_v}^{\bar {\bar r}}t^{N-1}(u-b)(v-b)\left(\frac{f(w)}{w-b}-\frac{f(v)}{v-b}\right)dt $$ $$  \leq (a_2-b)^2  \int_{ r_v}^{\bar {\bar r}}t^{N-1}\left(\frac{f(w)}{w-b}-\frac{f(v)}{v-b}\right)dt <0.  $$

By the same argument for $w$ we have

$$- r_w^{N-1}(w-u)'( r_w)(u( r_w)-b)\leq (a_2-b)^2  \int_{\bar {\bar r}}^{r_a}t^{N-1}\left(\frac{f(v)}{v-b}-\frac{f(w)}{w-b}\right)dt<0.  $$

 So $$ -r_v^{N-1}(v-u)'(r_v)(u(r_v)-b)- r_w^{N-1}(w-u)'( r_w)(u( r_w)-b) \leq $$
 $$- (a_2-b)^2 \int_{ r_v }^{r_a}t^{N-1}\left|\frac{f(v)}{v-b}-\frac{f(w)}{w-b}\right|dt$$

 $$\leq   - (a_2-b)^2 \int_{  (C_2)/2 }^{C_2}t^{N-1}\left|\frac{f(v)}{v-b}-\frac{f(w)}{w-b}\right|dt = -K $$

with $C_2$ the constant in lemma \ref{u} , so K is a positive   constant  independent of $u$.

On the other hand, as
$$ -r_v^{N-1}(v-u)'(r_v)(u(r_v)-b)- r_w^{N-1}(w-u)'(r_w)(u( r_w)-b) $$
$$\ge -r_v^{N-1}|u'(r_v)|(u(r_v)-b)-r_w^{N-1}|u'( r_w)|(u( r_w)-b)$$
and both $u(r_v)-b$ and $u( r_w)-b$ are less than $\alpha_*-b$, by Lemma \ref{u}, it holds that
$$ -r_v^{N-1}(v-u)'(r_v)(u(r_v)-b)- r_w^{N-1}(w-u)'(r_w)(u( r_w)-b) \geq -2 (\alpha_*-b)CR^{N-2}(1+R^2).$$

So if we chose $R$ sufficiently small, so that the left side is greater than $-K/2$, we get a contradiction, and we have proved the lemma.

\end{proof}

\begin{lemma}(Step 2)\label{s_0}
Let $v$ be the solution found in Step 1 that intersects $u$ at least two times. Let $r_1$ and $r_2$ denote the first two intersection points and set $u(r_1)=s_1=v(r_1)$, $u(r_2)=s_2=v(r_2)$, with $s_1>s_2$, see Figure \ref{fig-4}.
If $s_2>\beta$  then there is a point $s_0 \in (s_2,s_1)$, such that

i) $\dis \frac{u'(r(s_0))}{r(s_0)}=\frac{v'(\rho(s_0))}{\rho(s_0)}$  where       $r(s)$ and $\rho(s)$ denote the inverse of $u(r)$ and $v(r)$ respectively.

ii) $P(s_0,v)<P(s_0,u)$.
\end{lemma}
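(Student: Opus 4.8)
The plan is to obtain (i) from the intermediate value theorem and then to derive (ii) from the monotonicity of the Erbe--Tang functional $P$ \emph{along} the reference solution $v$, rather than from any local estimate at $s_0$.

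First, for (i): on $(r_1,r_2)$ the common values lie in $[s_2,s_1]\subset(\beta,\alpha_*)$, hence strictly above $b$, so from $r^{N-1}u'=-\int_0^r t^{N-1}f(u)\,dt<0$ (and likewise for $v$) both solutions are strictly decreasing there and their inverses $r(s),\rho(s)$ are well defined on $[s_2,s_1]$. I would set $g(s):=\frac{u'(r(s))}{r(s)}-\frac{v'(\rho(s))}{\rho(s)}$, a continuous function. Since $u$ descends from $\alpha_*$ and lies above $v$ for $r<r_1$ (at $r_*$ one has $u(r_*)=\alpha_*>v(r_*)$), the crossing at $(r_1,s_1)$ is transversal with $(u-v)'(r_1)<0$, giving $u'(r_1)<v'(r_1)$ and $g(s_1)<0$; the crossing back at $(r_2,s_2)$ gives $(u-v)'(r_2)>0$, hence $u'(r_2)>v'(r_2)$ and $g(s_2)>0$. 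The intermediate value theorem then produces $s_0\in(s_2,s_1)$ with $g(s_0)=0$.

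For (ii), write $m:=u'(r(s_0))/r(s_0)=v'(\rho(s_0))/\rho(s_0)<0$ and $r:=r(s_0)$, $\rho:=\rho(s_0)$; because $u<v$ on $(r_1,r_2)$ and both decrease, $u$ reaches the value $s_0$ first, so $r<\rho$. Using $1/r'=u'$ one finds $\frac{r^{N-1}}{r'}=mr^N$ and $\frac{r^N}{(r')^2}=m^2r^{N+2}$, so with $\psi(t):=A-m^2t^2$ and $A:=-2N\frac{F}{f}(s_0)m-2F(s_0)$ we get $P(s_0,u)=r^N\psi(r)$ and $P(s_0,v)=\rho^N\psi(\rho)$. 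Thus (ii) reduces to $\rho^N\psi(\rho)<r^N\psi(r)$. The decisive point is that $P(s_0,v)=\rho^N\psi(\rho)<0$: parametrizing $P(\cdot,v)$ by the value $s$, we have $P(\alpha,v)=0$ at the center $\alpha=v(0)$ (as $\rho\to0$ every term vanishes since $v'(0)=0$), while $(H_2)$ gives $N-2-2N(F/f)'<0$ and $\frac{r^{N-1}}{r'}=\rho^{N-1}v'<0$ because $v$ is decreasing, so $P_s(\cdot,v)>0$ on $(\beta,\alpha)$; integrating from $s_0\in(\beta,\alpha)$ up to $\alpha$ yields $P(s_0,v)<0$, i.e. $\psi(\rho)<0$.

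The transfer from $\psi(\rho)<0$ to (ii) is then elementary: $t\mapsto\psi(t)$ is strictly decreasing, so $\psi(\rho)<0$ forces $\psi(r)>\psi(\rho)$, and either $\psi(r)>0$, giving $P(s_0,u)=r^N\psi(r)>0>P(s_0,v)$, or $\psi(r)\le0$, in which case $r^N\psi(r)\ge\rho^N\psi(r)>\rho^N\psi(\rho)=P(s_0,v)$ (using $0\ge\psi(r)>\psi(\rho)$ and $r^N<\rho^N$); either way $P(s_0,v)<P(s_0,u)$. The step I expect to be most delicate is establishing $P(s_0,v)<0$: a direct attempt to sign $P(s_0,u)-P(s_0,v)$ through the coefficient $A$ fails, since by $(H_3)$ the function $f_1$ is increasing and the weighted mean $N|m|$ of $f$ along $v$ exceeds $f(s_0)$, forcing $A\ge0$, so one must instead exploit the global monotonicity of $P$ along $v$ and its normalization $P(\alpha,v)=0$ at the center. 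One must also verify that $v$ remains strictly decreasing with values above $\beta$ on all of $(0,\rho(s_0))$ so that $(H_2)$ applies throughout, and that the crossings used in the sign analysis of $g$ are transversal, both of which follow from the construction in Step 1.
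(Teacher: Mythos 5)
Your proof is correct and follows essentially the same route as the paper: part (i) by the intermediate value theorem applied to the slope ratios at the two intersections, and part (ii) by combining $r(s_0)<\rho(s_0)$ with the negativity of $P(s_0,v)$, obtained from the monotonicity of $P(\cdot,v)$ under $(H_2)$ and the normalization $P(v(0),v)=0$. Your $\psi$-reformulation and two-case comparison are just a compact repackaging of the paper's chain $P(s_0,u)=r^N\psi(r)>r^N\psi(\rho)>\rho^N\psi(\rho)=P(s_0,v)$, so no new ideas are involved.
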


\begin{figure}[h]
	\includegraphics[scale=1]{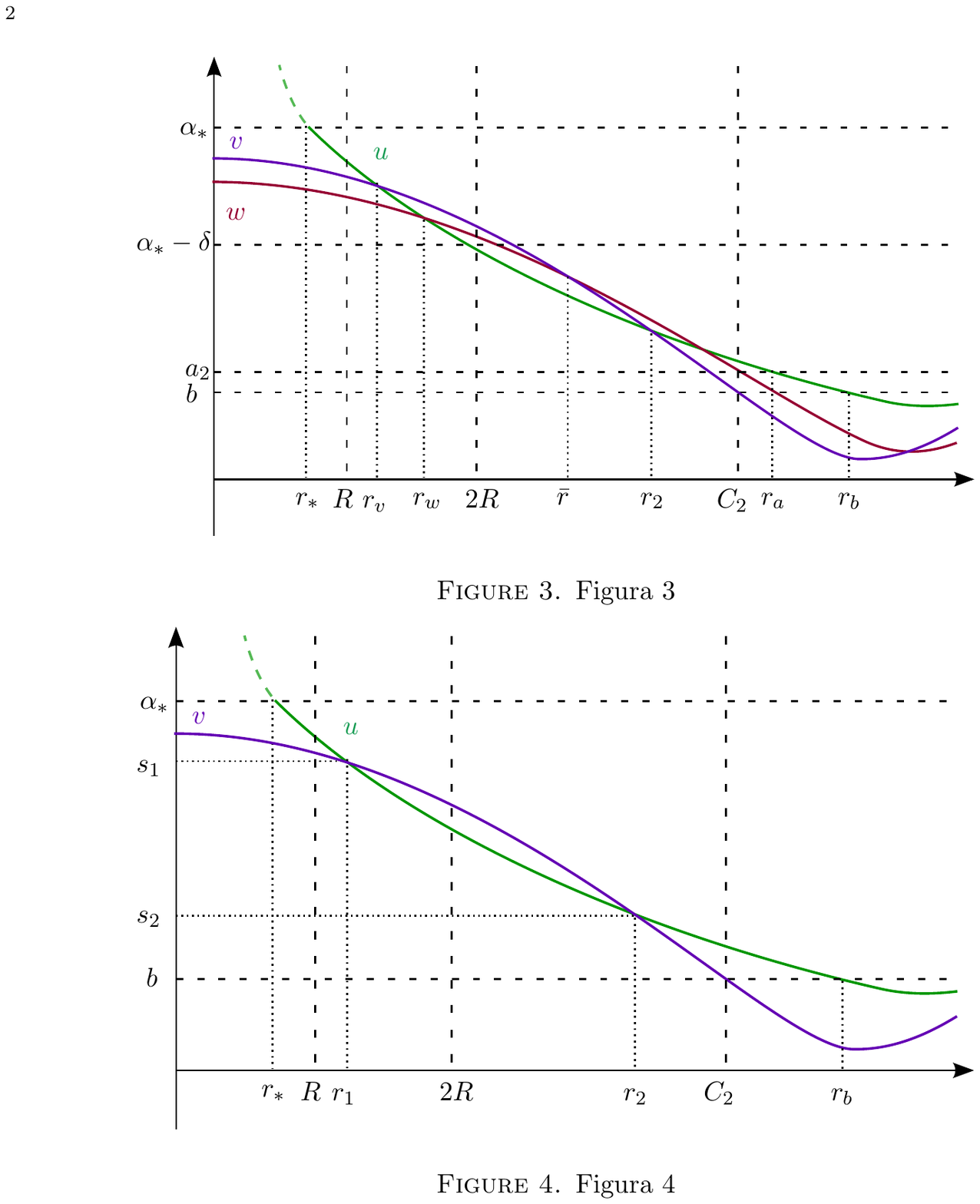}
	\caption{}\label{fig-4}
\end{figure}

\begin{proof}	
As in $s_1$ we have $|v'|<|u'|$ and in $s_2$ we have $|v'|>|u'|$, it holds that\\
at $s_1$ $\dis \frac{|u'|}{r_1}>	\frac{|v'|}{r_1}=\frac{|v'|}{r_2}$ and
at $s_2$ $\dis \frac{|u'|}{r_1}<	\frac{|v'|}{r_1}=\frac{|v'|}{r_2}$, proving $i)$. Also, from the fact that $\rho(s_0)>r(s_0)$, we obtain that $|u'(r(s_0))|<|v'(\rho(s_0))|$ and hence
\begin{eqnarray*}
	 P(s_0,u)&=&2r^N\left(N\frac{F}{f}(s_0)\frac{|u'(r(s_0))|}{r(s_0)}-\frac{|u'(r(s_0))|^2}{2}-F(s_0)\right)\\
	&>&2r^N\left(N\frac{F}{f}(s_0)\frac{|v'(\rho(s_0))|}{\rho(s_0)}-\frac{|v'(\rho(s_0))|^2}{2}-F(s_0)\right).
\end{eqnarray*}

We observe now that by $(H_2)$, $P(\cdot, v)$ is increasing, and since $P(v(0),v)=0$, it follows that $P(s,v)<0$, hence, as $\rho(s_0)>r(s_0)$, we obtain
\begin{eqnarray*}
P(s_0,u)>	2r^N\left(N\frac{F}{f}(s_0)\frac{|v'(\rho(s_0))|}{\rho(s_0)}-\frac{|v'(\rho(s_0))|^2}{2}-F(s_0)\right)\\
	>2\rho^N\left(N\frac{F}{f}(s_0)\frac{|v'(\rho(s_0))|}{\rho(s_0)}-\frac{|v'(\rho(s_0))|^2}{2}-F(s_0)\right)
	=P(s_0,v)
\end{eqnarray*}
implying $ii)$.
\end{proof}

\begin{lemma}(Step 3)\label{n-1} Let $s_0$ be the point in Step 2, then
\begin{equation}\label{m3} (r(s))^{N -1}|u'(r(s))|\leq (\rho(s))^{N -1}|v'(\rho(s))|\quad\mbox{in}\quad [s_2,s_0].
	\end{equation}
\end{lemma}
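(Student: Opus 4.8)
The plan is to work entirely in the $s$-variable and to reduce the statement to a sign property of the ``flux'' difference. Write $q_u(s):=(r(s))^{N-1}|u'(r(s))|$ and $q_v(s):=(\rho(s))^{N-1}|v'(\rho(s))|$, so that \eqref{m3} is exactly $q_u(s)\le q_v(s)$ on $[s_2,s_0]$. First I would record the two endpoint inequalities. At $s_2$ the curves meet, $r(s_2)=\rho(s_2)=r_2$, and since $|v'(r_2)|>|u'(r_2)|$ (the slope comparison used in Step~2) we get $q_v(s_2)>q_u(s_2)$. At $s_0$, part i) of Step~2 gives $|u'(r(s_0))|/r(s_0)=|v'(\rho(s_0))|/\rho(s_0)$; writing $m_u=|u'|/r$, $m_v=|v'|/\rho$ and noting $q_u=m_u r^N$, $q_v=m_v\rho^N$, the equality $m_u(s_0)=m_v(s_0)$ together with $\rho(s_0)>r(s_0)$ yields $q_v(s_0)=m_u(s_0)\rho(s_0)^N>m_u(s_0)r(s_0)^N=q_u(s_0)$. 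Hence $q_u-q_v<0$ at both endpoints.

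The key computation is an evolution equation for the flux along level sets. Since $u$ solves the radial equation, $(r^{N-1}u')'=-r^{N-1}f(u)$, and differentiating $q_u(s)=-(r(s))^{N-1}u'(r(s))$ using $r'(s)=1/u'(r(s))$ gives
\[
\frac{d q_u}{ds}=\frac{(r(s))^{N-1}f(s)}{u'(r(s))}=-\frac{f(s)\,(r(s))^{N-2}}{m_u(s)},\qquad m_u=\frac{|u'|}{r},
\]
and likewise for $v$ with $r,m_u$ replaced by $\rho,m_v$. Subtracting,
\[
\frac{d}{ds}\bigl(q_u-q_v\bigr)=-f(s)\Bigl(\frac{(r(s))^{N-2}}{m_u(s)}-\frac{(\rho(s))^{N-2}}{m_v(s)}\Bigr).
\]
Throughout $[s_2,s_0]$ we have $s\ge s_2>\beta\ge b$, so $f(s)=f_1(s)>0$ by $(H_1)$, and $u',v'<0$ there, so $r(\cdot),\rho(\cdot)$ are smooth and $q_u,q_v\in C^1$.

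Then I would argue by contradiction. If $q_u(\tilde s)>q_v(\tilde s)$ for some $\tilde s\in(s_2,s_0)$, then, since $q_u-q_v<0$ at $s_0$, there is a largest zero $s^*\in(\tilde s,s_0)$ of $q_u-q_v$, with $q_u-q_v<0$ on $(s^*,s_0]$ and $(q_u-q_v)(s^*)=0$. Because $q_u-q_v$ is negative just to the right of $s^*$ and vanishes at $s^*$, its derivative there satisfies $\frac{d}{ds}(q_u-q_v)(s^*)\le0$. On the other hand, at $s^*$ we have $q_u=q_v=:q$, so $m_u=q/r^N$ and $m_v=q/\rho^N$, whence the bracket above equals $(r^{2N-2}-\rho^{2N-2})/q$. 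Since $s^*\in(s_2,s_1)$ forces $\rho(s^*)>r(s^*)$ (the two solutions satisfy $u<v$ strictly between their two consecutive intersections, so $u$ attains each intermediate level before $v$), this bracket is negative, and therefore
\[
\frac{d}{ds}\bigl(q_u-q_v\bigr)(s^*)=f(s^*)\,\frac{\rho^{2N-2}-r^{2N-2}}{q}>0,
\]
a contradiction. Hence $q_u\le q_v$ on all of $[s_2,s_0]$.

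I expect the delicate points to be the geometric input rather than the computation: namely the strict ordering $\rho(s)>r(s)$ on $(s_2,s_1)$, which must be read off from the fact that $u$ and $v$ cross transversally at $r_1$ and $r_2$ with $u<v$ in between, and the verification that the whole level band $[s_2,s_0]$ is traversed while both $u$ and $v$ are still strictly decreasing (so that the inverses, and hence $q_u,q_v$, are well defined and $C^1$). It is worth noting that this argument uses only Step~2 i) and the sign of $f$; one could alternatively exploit $P_s(\cdot)=(2N(F/f)'-(N-2))\,q>0$ from $(H_2)$ together with Step~2 ii), but evaluating $P(\cdot,u)-P(\cdot,v)$ at a zero of $q_u-q_v$ turns out to be sign-ambiguous, so the flux-derivative identity is the decisive ingredient.
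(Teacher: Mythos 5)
Your proof is correct and takes essentially the same route as the paper's: both compute the $s$-derivative of a flux comparison (you use the difference $q_u-q_v$, the paper the ratio $\frac{r^{N-1}u'}{\rho^{N-1}v'}$) and derive a sign contradiction at a last touching point, using $f(s)>0$ and $\rho(s)>r(s)$ on $(s_2,s_1)$. The difference is purely cosmetic, and your explicit largest-zero continuity argument merely fills in a detail the paper leaves implicit.
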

\begin{proof}
As 	$\rho(s)>r(s)$ in $(s_2,s_1)$, \eqref{m3} clearly holds at $s=s_0$ and
\begin{eqnarray*}( \rho^{N-1}v')^2\frac{d}{ds}\Bigl(\frac{r^{N-1}u'}{\bar r^{N-1}v'}\Bigr)&=&-f(s)\Bigl(\frac{\rho^{N-1}v'}{r^{N-1}u'}\Bigr)\Bigl(r^{2(N-1)}-\rho^{2(N-1)}\Bigl(\frac{r^{N-1}u'}{\rho^{N-1}v'}\Bigr)^2\Bigr)\\
&>&-f(s)\Bigl(\frac{\rho^{N-1}v'}{r^{N-1}u'}\Bigr)\Bigl(1-\Bigl(\frac{r^{N-1}u'}{\rho^{N-1}v'}\Bigr)^2\Bigr)\rho^{2(N-1)}.
\end{eqnarray*}
Hence, if at some point $\bar s\in(s_2,s_0)$ it holds that
$(r(\bar s))^{N -1}|u'(r(\bar s))|= (\rho(\bar s))^{N -1}|v'(\rho(\bar s))|$
and $(r(\bar s))^{N -1}|u'(r(\bar s))|< (\rho(\bar s))^{N -1}|v'(\rho(\bar s))|$ in $(\bar s,s_0)$, then
$$\frac{d}{ds}\Bigl(\frac{r^{N-1}u'}{\rho^{N-1}v'}\Bigr)(\bar s)>0,$$
a contradiction, and thus \eqref{m3} follows.
	\end{proof}
\begin{lemma}(Step 4)\label{proof} End of the proof  of proposition \ref{EnP-f1}.
\end{lemma}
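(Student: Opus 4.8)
The plan is to finish by comparing $u$ with the positive solution $v$ produced in Step 1, using Proposition \ref{final1} at their second intersection to force $u$ to turn around while still positive, and then to promote that single turning point to positivity for all $r\ge r_*$ by an energy argument. First I record the structure of $v$: it solves \eqref{ivp} with $r_0=0$, $\bar\alpha=0$ and $v(0)=\alpha\in(b,\alpha_*)$, so by Proposition \ref{open-sets}(ii) we have $\alpha\in\mathcal P$; hence $v$ is positive and has a first critical point, i.e. a minimum value $\bar s>0$ (necessarily $\bar s\le b$) with $v'<0$ for $v\in(\bar s,\alpha)$ and $v'(\rho(\bar s))=0$. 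Let $r_1<r_2$ be the first two intersection radii of Step 1, with $s_1=u(r_1)>s_2=u(r_2)>b$, and recall that at $r_2$ the solution $v$ is the steeper one, $v'(r_2)<u'(r_2)<0$. I would apply Proposition \ref{final1} with $u_1=u$, $u_2=v$, $r_I=r_2$, $s_I=s_2$, the turning point $\bar s$ above being exactly its hypothesis on $u_2=v$.

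If $s_2\le\beta$, alternative (ii) of Proposition \ref{final1} applies with no further work. The substantial case is $s_2>\beta$, where I must verify $P(s_2,v)<P(s_2,u)<0$. For the first inequality I set $D(s)=P(s,u)-P(s,v)$ and use the formula for $P_s$ together with $r^{N-1}/r'=-r^{N-1}|u'|$ to get
\[
D'(s)=\Bigl(N-2-2N(F/f)'(s)\Bigr)\Bigl(\rho^{N-1}|v'|-r^{N-1}|u'|\Bigr).
\]
For $s>\beta$ the first factor is negative by $(H_2)$, while $\rho^{N-1}|v'|-r^{N-1}|u'|\ge0$ on $[s_2,s_0]$ by Step 3 (Lemma \ref{n-1}); hence $D'\le0$, so $D$ is nonincreasing on $[s_2,s_0]$, and since $D(s_0)>0$ by Step 2 (Lemma \ref{s_0}) I obtain $D(s_2)>0$, that is $P(s_2,v)<P(s_2,u)$.

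For the second inequality I write
\[
P(s_2,u)=-2r_2^N\Bigl(\tfrac12|u'(r_2)|^2-N\tfrac{F}{f}(s_2)\tfrac{|u'(r_2)|}{r_2}+F(s_2)\Bigr),
\]
and note $F(s_2)>0$ since $s_2>\beta$. The point is to show the bracket is positive once $R$ is small, and this is where I expect the main difficulty. Conceptually, as $R\to0$ the solution $u$ approaches, over the value range $[s_2,\alpha_*]$, the positive solution $u_0\in\mathcal P$ of \eqref{ivp} with $u_0(0)=\alpha_*-m/(N-2)$, where $m=r_*|u'(r_*)|\in[\bar a,\bar b]$; the hypothesis $\bar b<(\alpha_*-b)(N-2)$ is exactly what guarantees $u_0(0)\in(b,\alpha_*)$, so $u_0\in\mathcal P$. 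Since $P(u_0(0),u_0)=0$ and $P(\cdot,u_0)$ is increasing, one has $P(\cdot,u_0)<0$ below $u_0(0)$, and the relevant intersection value stays in $(\beta,u_0(0))$ in the limit; hence $P(s_2,u)<0$ for $R$ small. Making this sign control rigorous (via the quantitative bounds of Lemma \ref{u} and continuous dependence on $R$) is the crux of Step~4.

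With both parts of alternative (i) verified (or (ii) when $s_2\le\beta$), Proposition \ref{final1} yields $t\in[\bar s,s_2)$ with $u'(r(t))=0$. As $u$ decreases on $(r_*,r(t))$, this first critical point is a minimum, so $u''(r(t))=-f(t)\ge0$ forces $t\le b$; combined with $t\ge\bar s>0$ this gives $t\in(0,b]$ and $F(t)<0$ (as $F<0$ on $(0,\beta)$). Finally, as long as $u\le\alpha_*$ we have $f=f_1$ and the energy $E=\tfrac12(u')^2+F(u)$ obeys $E'=-\tfrac{N-1}{r}(u')^2\le0$; since $E(r(t))=F(t)<0$, for $r\ge r(t)$
\[
F(u(r))\le E(r)\le E(r(t))=F(t)<0,
\]
so $u(r)$ remains in a compact subinterval of $(0,\beta)$ (in particular $u<\alpha_*$ persists and $u>0$), while on $[r_*,r(t)]$ one has $u\in[t,\alpha_*]$. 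Hence $u(r)>0$ for all $r\ge r_*$, which completes the proof of Proposition \ref{EnP-f1}.
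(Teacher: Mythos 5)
Your core argument coincides with the paper's Step 4: you differentiate the difference of the $P$-functionals on $[s_2,s_0]$, use $(H_2)$ together with Step 3 (Lemma \ref{n-1}) to get its monotonicity, anchor it at $s_0$ via Step 2 (Lemma \ref{s_0}(ii)) to conclude $P(s_2,v)<P(s_2,u)$, and then apply Proposition \ref{final1} with $u_1=u$, $u_2=v$, $s_I=s_2$ (alternative (ii) when $s_2\le\beta$), the turning point of $v$ being supplied by $v\in\mathcal P$. Your closing energy argument ($E=\frac12(u')^2+F(u)$ nonincreasing, $F(u(r))\le F(t)<0$ past the minimum $t\in(0,b]$) is a more detailed substitute for the paper's terse ending, which simply concludes from Proposition \ref{final1} that $u$ never meets $v$ again, so $u(r)>v(r)>0$ for $r>r(s_2)$ and hence $u>0$ for $r\ge r_*$ since $v\in\mathcal P$; both conclusions are sound.

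The one point where you stop short --- the verification of $P(s_2,u)<0$, which you call the crux and leave as a heuristic --- is in fact dispensable, and the paper never checks it either. In the appendix proof of Proposition \ref{finalI}, the hypothesis $P_1(s_I)<0$ enters only through the chain $0<(P_1-P_2)(s_I)<(AP_1-P_2)(s_I)$ with $0<A<1$, and the only fact used afterwards is $(AP_1-P_2)(s_I)>0$. That positivity already follows from $P_2(s_I)<P_1(s_I)$ together with $P_2(s_I)<0$: if $P_1(s_I)\ge 0$ then $AP_1(s_I)\ge 0>P_2(s_I)$, while if $P_1(s_I)<0$ then $AP_1(s_I)>P_1(s_I)>P_2(s_I)$. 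And $P(s_2,v)<0$ is free, exactly as in your own Step 2 reasoning: by $(H_2)$, $P(\cdot,v)$ is increasing on $(\beta,v(0))$ and $P(v(0),v)=0$. So your proposed limiting construction (convergence of $u$, as $R\to 0$, to a solution $u_0$ with $u_0(0)=\alpha_*-m/(N-2)$) is both unnecessary and, as written, unjustified --- you give no argument for that convergence or for the identification of the limit profile. Replace that paragraph by the observation above (i.e., that only $P(s_2,v)<\min\{P(s_2,u),0\}$ is needed in Proposition \ref{final1}) and your Step 4 is complete and equivalent to the paper's.
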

\begin{proof}
	Finally we observe that by $(H_2)$ and the previous lemma we have
$$(P(\cdot,v)-P(\cdot,u))_s=\left(N-2-2N\Bigl(\frac{F}{f}\Bigr)'(s)\right)\Bigl(\bar r^{N-1}v'(\rho(s))-r^{N-1}u'(r(s))\Bigr)>0$$
in $[s_2,s_0]$, therefore, from Lemma \ref{s_0} (ii) we conclude that, if $s_2>\beta $ then
$P(s_2,v)<P(s_2,u)$. We are now in the situation of Lemma \ref{final1} with $u_1=u$, $u_2=v$ and $s_I=s_2$ so we conclude that $u$ does not intersect $v$ again and $u(r)>v(r)>0$ for all $r>r (s_2)$.
As $v\in {\mathcal P}$ , we have $u(r)>0$ for all $r>r_*$.
\end{proof}

\section{Proof of main Theorems}\label{sec f2}

In this section we analyze the solutions to \eqref{ivp} for $f$ defined by
 \begin{eqnarray}\label{ffinal}
f(s)=\begin{cases}
f_1(s) &  s\leq \alpha_1=\alpha_*+\epsilon_1\\
L_1(s) & \alpha_1 \leq s\leq \alpha_1+\epsilon_1\\
A_2^2f_2(s) &  s\geq \alpha_1+\epsilon_1
\end{cases}
\end{eqnarray}
and determine the existence of $\epsilon_1$ and $A_2$ so that problem \eqref{pde}  has at least two solutions. The following result is the crucial ingredient of the proof of Theorem \ref{second solution}.

\begin{proposition}\label{enP-Agrande} There is an $\alpha_0>\alpha_*$ such that for any  $\alpha\in(\alpha_*,\alpha_0)$, there is an $\bar\epsilon>0$ and $\bar A>0$ such that for any $\epsilon_1<\bar\epsilon$ and $A_2>\bar A$ the solution to  problem \eqref{ivp} with $f$ defined in \eqref{ffinal}
with
$u(0)=\alpha$ and $u'(0)=0$, is positive for all $r$.
\end{proposition}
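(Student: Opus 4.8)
The plan is to reduce the claim to Proposition \ref{EnP-f1} by showing that a solution starting just above $\alpha_*$, after the nonlinearity is boosted by the large factor $A_2^2$, returns to the level $\alpha_*$ having spent only a tiny radius $r_*$ and arriving with the product $r_*|u'(r_*)|$ trapped in the fixed interval $[\bar a,\bar b]$ required by that proposition. Concretely, I would first treat the transition caused by the linear piece $L_1$ on $[\alpha_1,\alpha_1+\epsilon_1]=[\alpha_*+\epsilon_1,\alpha_*+2\epsilon_1]$ and the large-amplitude piece $A_2^2f_2$ above it. The key heuristic is that for $A_2$ large the solution climbs from $\alpha$ up to some maximum height and back down extremely quickly (on a radial scale $\sim 1/A_2$), because the forcing term $f\approx A_2^2 f_2$ is enormous, while $\epsilon_1$ small controls the contribution of the transition layer. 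So I would fix a target value $\alpha_0>\alpha_*$ and an initial $\alpha\in(\alpha_*,\alpha_0)$, and analyze the two regimes separately.

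The main estimates would run as follows. For $r$ small, integrate the equation $(r^{N-1}u')'=-r^{N-1}f(u)$ from $0$. While $u\ge\alpha_1$, we have $f(u)=A_2^2 f_2(u)\ge A_2^2 m$ with $m=\min_{[\alpha_*,\alpha_0']}f_2>0$ by $(H_5)$, which forces $u'$ to become negative and drives $u$ back down to $\alpha_1$ within a radius that shrinks like $1/A_2$; a matching upper bound on $|u'|$ comes from $f_2$ bounded above on the relevant compact range. The upshot I would establish is that there exist constants, depending only on $f_2$, $\alpha_*$, $\alpha_0$ (not on $A_2$ once $A_2$ is large, nor on $\epsilon_1$ once $\epsilon_1$ is small), controlling both the radius $r_*$ at which $u$ returns to $\alpha_*$ and the boundary data there. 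The delicate point is the value of $r_*|u'(r_*)|$: on the large-amplitude region one expects an approximate energy relation, so that $|u'|$ at the return to $\alpha_1$ is comparable to $A_2\sqrt{2(F_2(\text{peak})-F_2(\alpha_1))}$ while $r_*\sim 1/A_2$, making the product $r_*|u'(r_*)|$ converge, as $A_2\to\infty$, to a finite limit determined by the energy gained across $[\alpha_*,\text{peak}]$. I would choose $\alpha_0$ close enough to $\alpha_*$ that this limiting product lies strictly inside $(\,0,(\alpha_*-b)(N-2)\,)$, then pick $\bar a,\bar b$ with $\bar b<(\alpha_*-b)(N-2)$ bracketing it, and finally take $A_2$ large and $\epsilon_1$ small so that $\bar a\le r_*|u'(r_*)|\le\bar b$ and $r_*<R$ with $R$ small, exactly the hypotheses of Proposition \ref{EnP-f1}.

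Once the solution reaches $u(r_*)=\alpha_*$ with $u'(r_*)<0$ and $r_*|u'(r_*)|\in[\bar a,\bar b]$ and $r_*$ small, Proposition \ref{EnP-f1} applies verbatim (since $f=f_1$ for $u\le\alpha_*$) and yields $u(r)>0$ for all $r\ge r_*$; combined with positivity on $[0,r_*]$ this gives positivity for all $r$, which is the claim.

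The step I expect to be the main obstacle is making the product $r_*|u'(r_*)|$ simultaneously \emph{bounded below by $\bar a$ and above by $\bar b$} uniformly as $A_2\to\infty$ and $\epsilon_1\to 0$. The upper bound is essentially an energy/quasi-conservation estimate — one must show the radial damping term $\frac{N-1}{r}u'$ contributes only lower-order corrections on the fast $1/A_2$ scale, and that the transition layer $L_1$ adds a vanishing amount as $\epsilon_1\to0$. The lower bound is subtler: one must rule out $|u'(r_*)|$ decaying faster than $r_*$ grows, i.e. guarantee the solution does not lose essentially all its speed before returning to $\alpha_*$; this again comes from the energy balance, since crossing the finite amplitude band $[\alpha_*,\alpha_1]$ on the way down costs a controlled, $A_2$-independent amount of energy. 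Quantifying these two effects with explicit Grönwall-type bounds, and verifying the limiting product stays below the hard threshold $(\alpha_*-b)(N-2)$ for $\alpha$ near $\alpha_*$, is the crux; the rest is bookkeeping and an appeal to continuous dependence on initial data to handle the matching of the two regimes.
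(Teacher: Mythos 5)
Your overall architecture coincides with the paper's: reduce to Proposition \ref{EnP-f1} by showing that the solution reaches $\alpha_*$ at a radius $r_*<R$ with $\bar a\le r_*|u'(r_*)|\le \bar b$ and $\bar b<(\alpha_*-b)(N-2)$, using $A_2$ large to compress the radius and $\epsilon_1$ small to make the transition strip harmless. Your identification of the two-sided bound on $r_*|u'(r_*)|$ as the crux is also accurate; in the paper this is supplied by Lemma \ref{epsilon}, which shows that across the thin strip $[\alpha_*,\alpha_*+2\epsilon_1]$ the product $r|u'(r)|$ at worst halves (giving the lower bound $\bar a=(N-2)a$) and gains at most an additive term of the form $\frac{2r_\delta^2||g||_+}{(N-2)a}\,\delta$, which remains bounded as $A_2\to\infty$ because $(r_\epsilon^u)^2A_2^2||f_2||\le (r_\epsilon^w)^2||f_2||$.

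There is, however, a genuine flaw in the mechanism you propose for the key uniform estimate: the claim that the radial damping term $\frac{N-1}{r}u'$ contributes ``only lower-order corrections on the fast $1/A_2$ scale.'' It does not. The equation is exactly invariant under the scaling the paper exploits: if $u$ solves the problem with nonlinearity $A_2^2f_2$, then $v(r)=u(r/A_2)$ solves it with $f_2$, and all three terms --- including the friction term --- scale by the same factor $A_2^2$. Consequently, above the level $\alpha_1+\epsilon_1$ one has identically $u(r)=w(A_2r)$, where $w$ is the unit-amplitude solution, and the product $r_\epsilon^u|u'(r_\epsilon^u)|=r_\epsilon^w|w'(r_\epsilon^w)|$ is \emph{exactly} independent of $A_2$; its value is the friction-included quantity for $w$, not the frictionless energy value $\sim A_2\sqrt{2(F_2(\alpha)-F_2(\alpha_1))}\cdot r_*$ that your Gr\"onwall program would aim to extract. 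Had you attempted to prove negligibility of the damping, that step would fail. What saves your program is that you never actually need the energy formula: you only need the limiting product to be finite, positive, continuous in $\alpha$, and to tend to $0$ as $\alpha\downarrow\alpha_*$ (so that $\alpha_0$ can be chosen with the product below $\bar b$), and all of this follows at once from the exact scaling together with continuous dependence of the unit-scale flow --- which is precisely the paper's opening step, choosing $\alpha_0$ so that $r_*(\alpha)|w'(r_*(\alpha),\alpha)|<\bar b/3$. A further minor slip: since $u'(0)=0$ and $f>0$ near $\alpha$, one has $u''(0)=-f(\alpha)/N<0$, so the solution never ``climbs to a maximum height''; its maximum is at $r=0$ (this does not affect your estimates, as your ``peak'' is simply $\alpha$). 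With the scaling identity replacing the asymptotic energy analysis, your proposal becomes the paper's proof.
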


We will prove this using Proposition \ref{EnP-f1}. By choosing small enough $\alpha$ and $\epsilon_1$, and big enough $A_2$, we will be able to prove that the solution with initial value $\alpha$ will reach $\alpha_*$ at $r_*<R$ with $r_*|u'(r_*)|$ bounded.             In order to prove this result we need the following lemma.

\begin{lemma}\label{epsilon}
Let $v$ be the solution to the initial value problem
\begin{eqnarray}\label{ge}
\begin{gathered}
v''+\frac{N-1}{r}v'+g(v)=0,\quad r>r_\delta,\\
v(r_\delta)=\bar\alpha +\delta,\quad v'(r_\delta)=v'_\delta<0
\end{gathered}
\end{eqnarray}
where $g$ is a positive continuous function defined in $[\bar\alpha,\bar\alpha+\delta]$ and $r_\delta>0$. Let $\bar r$ be defined by  $v(\bar r)=\bar\alpha$, $||g||_+ = \max \{g(s) : s \in [\bar\alpha,\bar\alpha +\delta]\}$ and $a>\delta$.

  If  $2(N-2)a<r_{\delta}|v'(r_\delta)|$, then    $\bar r<2 r_\delta$ and $$(N-2)a\le \bar r|v'(\bar r)|\le r_\delta|v'(r_\delta)|+\frac{2r_\delta^2||g||_+}{(N-2)a}\ \delta .$$
\end{lemma}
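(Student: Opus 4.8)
The idea is to run the same integral-estimate machinery used throughout Section 3, but now applied to the auxiliary equation \eqref{ge} on the short interval $[r_\delta,\bar r]$ where $v$ decreases from $\bar\alpha+\delta$ down to $\bar\alpha$, so that $g(v)$ is controlled by $\|g\|_+$. First I would write the equation in the divergence form $(r^{N-1}v')'=-r^{N-1}g(v)$ and integrate from $r_\delta$ to $r$. Since $g>0$, this gives $r^{N-1}v'(r)=r_\delta^{N-1}v'(r_\delta)-\int_{r_\delta}^r t^{N-1}g(v(t))\,dt\le r_\delta^{N-1}v'(r_\delta)<0$, so $v$ stays strictly decreasing and $\bar r$ is well-defined. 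This also yields the lower bound direction: $r^{N-1}|v'(r)|\ge r_\delta^{N-1}|v'(r_\delta)|$, hence $|v'(r)|\ge (r_\delta/r)^{N-1}|v'(r_\delta)|$.

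To prove $\bar r<2r_\delta$, I would mimic part (i) of Lemma 3.2: dividing the inequality $r^{N-1}v'(r)\le r_\delta^{N-1}v'(r_\delta)$ by $r^{N-1}$ and integrating over $(r_\delta,r)$ gives
\[
\delta=v(r_\delta)-v(\bar r)\ge \frac{r_\delta|v'(r_\delta)|}{N-2}\Bigl(1-\bigl(r_\delta/\bar r\bigr)^{N-2}\Bigr).
\]
Using the hypothesis $2(N-2)a<r_\delta|v'(r_\delta)|$ together with $a>\delta$, the right-hand side forces $1-(r_\delta/\bar r)^{N-2}\le \delta(N-2)/(r_\delta|v'(r_\delta)|)<\tfrac12$, so $(r_\delta/\bar r)^{N-2}>\tfrac12$, giving $\bar r<2^{1/(N-2)}r_\delta\le 2r_\delta$, exactly as in the earlier lemma.

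For the two-sided bound on $\bar r|v'(\bar r)|$, the lower estimate $(N-2)a\le \bar r|v'(\bar r)|$ should come from combining $\bar r^{N-1}|v'(\bar r)|\ge r_\delta^{N-1}|v'(r_\delta)|$ with $\bar r<2r_\delta$; concretely $\bar r|v'(\bar r)|\ge (r_\delta/\bar r)^{N-2}\,r_\delta|v'(r_\delta)|>\tfrac12\,r_\delta|v'(r_\delta)|>(N-2)a$ by hypothesis. For the upper estimate I would go back to $\bar r^{N-1}|v'(\bar r)|\le r_\delta^{N-1}|v'(r_\delta)|+\int_{r_\delta}^{\bar r}t^{N-1}g(v(t))\,dt$ and bound the integrand crudely by $\|g\|_+\,t^{N-1}$, integrating and using $\bar r<2r_\delta$ to get $\int_{r_\delta}^{\bar r}t^{N-1}g\,dt\le \|g\|_+\,(2r_\delta)^N/N$. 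Dividing by $\bar r^{N-1}$ and again using $\bar r<2r_\delta$ (now in the denominator, via $\bar r^{N-1}\ge r_\delta^{N-1}$) converts this into $r_\delta|v'(r_\delta)|+c\,r_\delta\|g\|_+$ type terms; the task is then to massage the constant into the stated shape $r_\delta|v'(r_\delta)|+\frac{2r_\delta^2\|g\|_+}{(N-2)a}\,\delta$.

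The main obstacle I anticipate is precisely this last bookkeeping: the naive bound $\|g\|_+(2r_\delta)^N/N$ does not obviously contain the factor $\delta/a$ that appears in the target. The trick must be that over $[r_\delta,\bar r]$ the variable $v$ only moves by $\delta$, so rather than bounding the arclength of the $r$-interval by $2r_\delta$ one should estimate the integral by changing variables or by noting that the short drop $\delta$ limits how long the integration can last. I expect the sharp route is to bound $(\bar r-r_\delta)$ in terms of $\delta$ and $|v'|$ — from $|v'(r)|\ge (r_\delta/\bar r)^{N-1}|v'(r_\delta)|\ge 2^{-(N-1)}|v'(r_\delta)|$ one gets $\delta=\int_{r_\delta}^{\bar r}|v'|\ge 2^{-(N-1)}|v'(r_\delta)|(\bar r-r_\delta)$, hence $\bar r-r_\delta\le 2^{N-1}\delta/|v'(r_\delta)|$ — and then estimate $\int_{r_\delta}^{\bar r}t^{N-1}g\,dt\le \|g\|_+(2r_\delta)^{N-1}(\bar r-r_\delta)$, whose $\delta$-dependence and the hypothesis $r_\delta|v'(r_\delta)|>2(N-2)a$ together produce the claimed $\frac{2r_\delta^2\|g\|_+}{(N-2)a}\delta$ after dividing by $\bar r^{N-1}\ge r_\delta^{N-1}$ and simplifying constants. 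Reconciling the dimensional powers of $2$ with the clean constant in the statement is where I would spend the most care.
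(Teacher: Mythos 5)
Your derivation of \eqref{m0}, the bound $\bar r<2r_\delta$, and the lower estimate $(N-2)a\le\bar r|v'(\bar r)|$ coincide exactly with the paper's proof, so three of the four assertions are fully handled. The gap is in the upper estimate, and you correctly sensed where: your proposed bound on the length of the interval, namely $|v'(r)|\ge 2^{-(N-1)}|v'(r_\delta)|$ and hence $\bar r-r_\delta\le 2^{N-1}\delta/|v'(r_\delta)|$, is too lossy. Tracking your own constants: $\int_{r_\delta}^{\bar r}t^{N-1}g\,dt\le \|g\|_+(2r_\delta)^{N-1}\cdot 2^{N-1}\delta/|v'(r_\delta)|$, and after dividing by $\bar r^{N-2}\ge r_\delta^{N-2}$ and invoking $|v'(r_\delta)|>2(N-2)a/r_\delta$, the error term becomes $2^{2N-3}\,r_\delta^2\|g\|_+\delta/((N-2)a)$ rather than the stated $2\,r_\delta^2\|g\|_+\delta/((N-2)a)$. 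So as sketched you prove the lemma only with a dimension-dependent constant $2^{2N-3}$ in place of $2$, and you explicitly leave open how to reconcile the powers of $2$ --- that is precisely the unresolved step.

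The paper closes it without any new idea, just a sharper use of what you already have. Instead of a pointwise lower bound on $|v'|$, keep $r^{N-1}|v'(r)|\ge r_\delta^{N-1}|v'(r_\delta)|$ and integrate, which reproduces \eqref{m1}:
\begin{equation*}
1-\Bigl(\frac{r_\delta}{\bar r}\Bigr)^{N-2}\le \frac{(N-2)\,\delta}{r_\delta|v'(r_\delta)|}\le \frac{\delta}{2a},
\end{equation*}
whence $\bar r^{N-2}-r_\delta^{N-2}\le \frac{\delta}{2a}\bar r^{N-2}\le \frac{\delta}{a}r_\delta^{N-2}$ (using $\bar r^{N-2}\le 2r_\delta^{N-2}$), and then the mean value theorem gives the clean bound $\bar r-r_\delta\le \frac{\delta}{(N-2)a}\,r_\delta$, with no stray powers of $2$. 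Feeding this into $\int_{r_\delta}^{\bar r}t^{N-1}g\,dt\le\|g\|_+\bar r^{N-1}(\bar r-r_\delta)$ and dividing by $\bar r^{N-2}$ (so the first term is $r_\delta^{N-1}|v'(r_\delta)|/\bar r^{N-2}\le r_\delta|v'(r_\delta)|$ and the second is $\|g\|_+\bar r(\bar r-r_\delta)\le 2r_\delta\|g\|_+\cdot\frac{\delta r_\delta}{(N-2)a}$) yields exactly the stated constant. It is worth noting that in the downstream applications (Proposition \ref{enP-Agrande} and Proposition \ref{even}) the parameter $\delta$ is a free small quantity, so your weaker constant would in fact suffice there; but the lemma as stated, with the constant $2$, is not established by your sketch without the mean-value-theorem step.
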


\begin{proof}
Since $v$ is a solution of \eqref{ge}, we have that for $r\in[r_\delta,\bar r ]$
\begin{equation}\label{m0}
	r^{N-1}v'(r)= r_\delta^{N-1}v'(r_\delta) -\int_{r_\delta}^r t^{N-1}f_\delta(v(t))dt < r_\delta^{N-1}v'(r_\delta)
\end{equation}
dividing by $r^{N-1}$ and integrating over $[r_\delta,\bar r ]$ we get
\begin{equation*}
	\delta= v(r_\delta)-v(\bar r )> \frac{r_\delta^{N-1}|v'(r_\delta)|}{N-2}\left( \frac{1}{r_\delta^{N-2}} -  \frac{1}{\bar r ^{N-2}}\right)
\end{equation*}
implying
\begin{equation}\label{m1}
	\delta> \frac{r_\delta |v'(r_\delta)|}{N-2}\left( 1 -  \frac{r_\delta^{N-2}}{\bar r^{N-2}}\right) >2a\left( 1 -  \frac{r_\delta^{N-2}}{\bar r^{N-2}}\right).
\end{equation}
Since $a>\delta$ we conclude
$\bar r < 2^{1/{N-2}} r_\delta< 2 r_\delta$. Moreover, from \eqref{m0},
$$\bar r|v'(\bar r)|\ge \Bigl(\frac{r_\delta}{\bar r}\Bigr)^{N-2}r_\delta |v'(r_\delta)|\ge\frac{1}{2}r_\delta |v'(r_\delta)|\ge (N-2)a.$$
Finally, from \eqref{m1} we obtain
$$\frac{\delta}{2a}\ge \left( 1 -  \frac{r_\delta^{N-2}}{\bar r^{N-2}}\right)$$
and thus
$$\bar r^{N-2}-r_\delta^{N-2}\le \frac{\delta}{a}r_\delta^{N-2},$$
and by the mean value theorem,
$$(N-2)r_\delta^{N-3}(\bar r-r_\delta)\le \frac{\delta}{a}r_\delta^{N-2}$$
implying
$$\bar r-r_\delta\le \frac{1}{(N-2)}\frac{\delta}{a}r_\delta.$$
Using again \eqref{m0} and since $r_\delta<\bar r<2r_\delta$, we obtain
$$\bar r|v'(\bar r)|\le r_\delta|v'(r_\delta)|+\frac{2r_\delta^2||g||_+ }{(N-2)a}\ \delta.
$$

\end{proof}

\begin{proof}[Proof of Proposition \ref{enP-Agrande}]

We consider first the solutions $w$ of problem \eqref{ivp} with $f=f_2$ extended continuously below $\alpha_*$, and initial conditions $w(0,\alpha)=\alpha\geq \alpha_*$ and $w'(0,\alpha)=0$. Let $r_*(\alpha)$ be the radius were $w(r_*(\alpha),\alpha)=\alpha_*$, then the expression $r_*(\alpha)|w'(r_*(\alpha),\alpha)|$ is continuous with $r_*(\alpha_*)|w'(r_*(\alpha_*),\alpha_*)|=0$, thus there is an $\alpha_0>\alpha_*$ such that for $\alpha\in(\alpha_*,\alpha_0)$ we have \begin{equation*} r_*(\alpha)|w'(r_*(\alpha),\alpha)| < \bar b/3
\end{equation*}
where we choose $\bar b$ of Proposition \ref{EnP-f1} as $\bar b= \frac{(\alpha_*-b) (N-2) }{2}$.

Fixing  $\alpha\in(\alpha_*,\alpha_0)$, denote by $r_*^w=r_*(\alpha)$ and $r_\epsilon^w$ the radius were $w(r_\epsilon^w,\alpha)=\alpha_*+2\epsilon$.  Again by continuity, there is an $\bar \epsilon<\alpha/2$ such that for $\epsilon<\bar \epsilon$
\begin{equation}\label{wprima} r_\epsilon(\alpha)|w'(r_\epsilon(\alpha),\alpha)| < \bar b/2.
\end{equation}

By choosing a smaller $\bar \epsilon$ if necessary, we can assume that there is an $a>0$  such that for all $\epsilon< \bar \epsilon$,
$$2\epsilon<a<\frac{r_{\epsilon}^w |w'(r_{\epsilon}^w,\alpha)|}{2(N-2)}.$$
 This can be achieved since the right hand side does not tend to $0$ with $\epsilon$.

We will also require $\bar \epsilon$ to be  small enough so that
$$\frac{2(r_\epsilon^w)^2 f_\infty \epsilon}{(N-2)a}<\frac{2  (r_*^w)^2f_\infty\bar \epsilon}{(N-2)a}< \bar b/2$$
where $f_\infty$ denotes the maximum value of $f_1$ or $f_2$ on the interval $(\alpha_*,\alpha)$.

 Now let $u$ be the solution of problem \eqref{ivp} with $f$ as in the statement, with the $\alpha$ and $\epsilon_1<\bar \epsilon$ chosen as above, and initial conditions $u(0)=\alpha$ and $u'(0)=0$.

Note that for any such solution, the function $v(r)= u(r/A)$ satisfies:
$$v''(r)+\frac{(N-1)}{r}v'(r)= \frac{1}{A^2}\left(u''(r/A)+\frac{(N-1)}{r/A}v'(r/A)\right)= -\frac{1}{A^2}f(v(r))$$
so for $r<r_\epsilon^v$, (where $v(r_\epsilon^v)=\alpha_1+\epsilon_1$), $v$ coincides with $w$. Moreover, $r_\epsilon^v= A r_\epsilon^u$ and $v'(r_\epsilon^v)= u'(r_\epsilon^v/A)/A =  u'(r_\epsilon^u)/A$ hence $r_\epsilon^u|u'(r_\epsilon^u)|$ is independent of $A$.

Choose $\bar A>1$ such that $r_*^w/\bar A<R/2$, where $R$ is the constant from Proposition \ref{EnP-f1} with $\bar a=(N-2)a$. Then for any $A_2>\bar A$ the solution $u(r)$ will be $u(r)= w(A_2r)$ for $r<r_\epsilon^u$, and at this point the choice of $\epsilon_1$ guarantees that  $$2(N-2)a<r_\epsilon^u|u'(r_\epsilon^u)|=r_\epsilon^w|w'(r_\epsilon^w)|<\bar b/2$$ thus it satisfies the condition of Lemma \ref{epsilon}. Therefore $r_* < 2r_\epsilon^u = 2r_\epsilon^w/A_2< 2r_*^w/A_2<R$ and

 $$\bar a=(N-2)a\le r_*|v'(r_*)|\le r_\epsilon^u|v'(r_\epsilon^u)|+\frac{2(r_\epsilon^u)^2||g||_+ }{(N-2)a}\ \epsilon \leq \bar b/2 +\frac{2(r_\epsilon^w)^2f_\infty }{(N-2)a}\ \epsilon < \bar b $$
were we are using that the maximum value of $f$ in $(\alpha_*,\alpha_*+2\epsilon)$ is either the maximum of $f_1$ or $A_2^2f_2$ thus $(r_\epsilon^u)^2f\leq \max\{(r_\epsilon^v)^2f_1/A_2^2, (r_\epsilon^v)^2f_2\}\leq (r_\epsilon^v)^2 f_\infty\leq (r_\epsilon^w)^2 f_\infty$.

Thus, $u$ satisfies the conditions of Proposition \ref{EnP-f1}, and it is positive for all $r$.

\end{proof}

\begin{figure}[h]
	\includegraphics[scale=1]{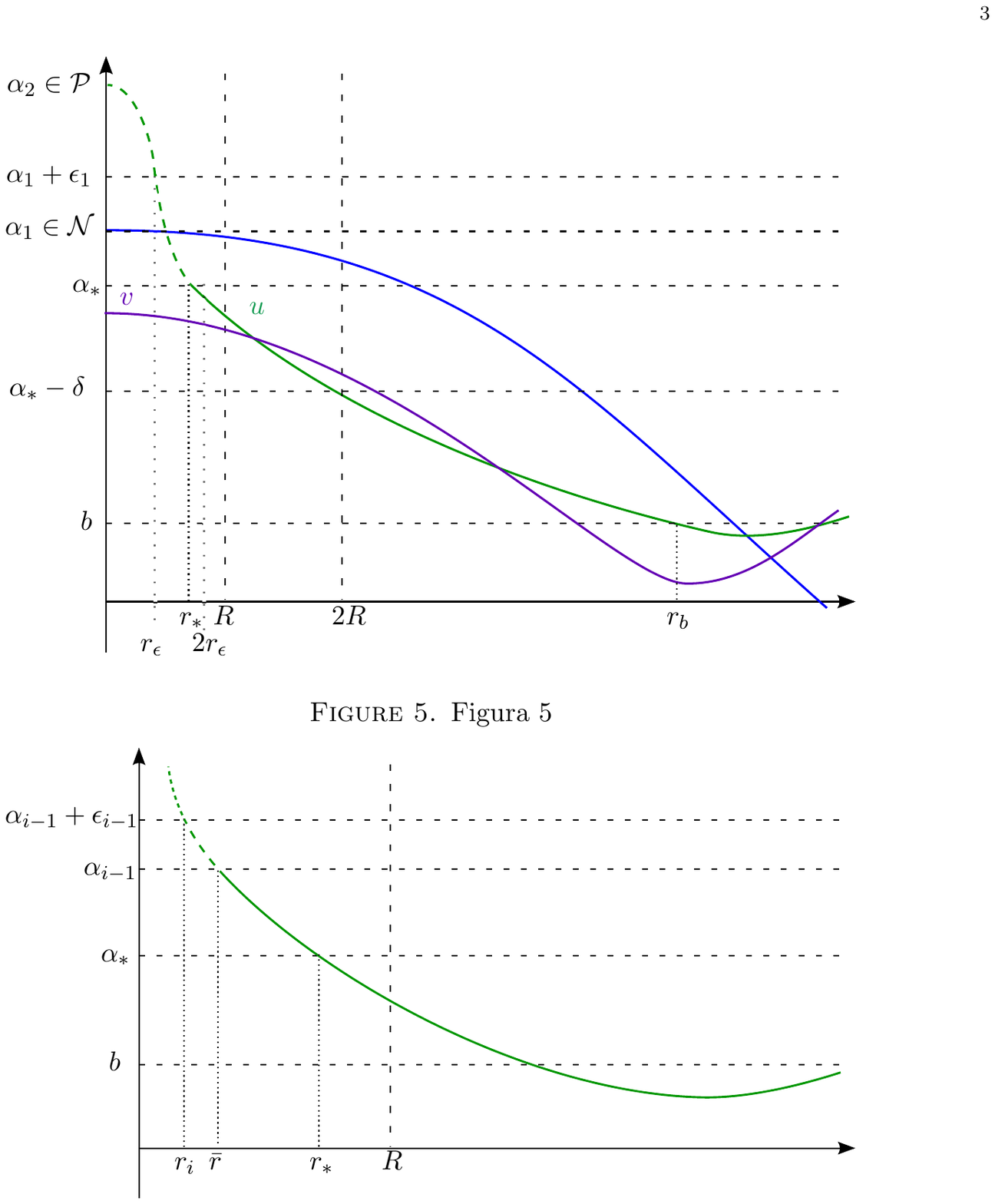}
	\caption{}\label{fig-5}
\end{figure}

\subsection*{Proof of Theorem \ref{second solution}}

Let $\alpha_0$ as in Proposition \ref{enP-Agrande} and $\alpha\in(\alpha_*,\alpha_0)$, let $\bar A$ and $\bar \epsilon$ be the constants given in Proposition \ref{enP-Agrande}. Given  $f$ as in \eqref{f}, with $A_2>\bar A$ and $0<\epsilon_1<\bar \epsilon$ we will consider  solution $u$ of problem \eqref{pde} with $u(0)=\alpha$, $u'(0)=0$.

It is known that for subcritical functions $f_1$ satisfying the conditions $(H_1)$-$(H_4)$, the solutions are positive for $\alpha<\alpha_*$ and change sign for $\alpha>\alpha_*$,  see Proposition \ref{open-sets}. Therefore, a solution to our problem will be positive for $\alpha<\alpha_*$ and changes sign for $\alpha_*<\alpha\leq \alpha_1$.

Moreover, from Proposition \ref{enP-Agrande} we have that, for some $\alpha_2>\alpha_1+\epsilon_1$, the solution is positive, that is $\alpha_2\in\mathcal P$.
Therefore, since $\mathcal N$ and $\mathcal P$ are open disjoint sets, between $\alpha_1 \in \mathcal N$ and $\alpha_2\in \mathcal P$ there must be an $\alpha_2^*$ that is in neither, this corresponds to a second ground state solution.

\subsection*{Proof of Theorem \ref{third solution}}
It follows from the results in \cite{cku} or section 4 in \cite{gst} that if condition $(H_6)$ is satisfied, then $\alpha\in\mathcal N$ for $\alpha$ large enough, hence there must be another value $\alpha_3^*>\alpha_2$ where $\alpha_2\in\mathcal P$ is the value given in the proof of Theorem \ref{second solution} such that $u(\cdot,\alpha_3^*)$ is a ground state solution.

\subsection*{ Proof of Theorem  \ref{solution f_3}}\label{sec f3}

We know there is a constant $K=K(f_1)$ such that if for an initial condition $\alpha$, the corresponding solution is such that for $ u(r)=\alpha_*$, $r>K$, then
$\alpha$ is in ${\mathcal N}$ See \cite[Lemma 3.1]{gst}. We will prove that such  $\alpha$ exists, choosing $\epsilon_2$ and $A_3$ .

Choose $\epsilon_2< \frac{\gamma-\alpha_*}{4}$, then by Proposition \ref{enP-Agrande} there is an $\alpha_2<\alpha_*+\epsilon_2$ and an $\epsilon_1$ such that $\alpha_1=\alpha_*+\epsilon_1<\alpha_2$ and  $\alpha_2 \in \mathcal P$. Note that $\alpha_2+3\epsilon_2<\gamma $ and define $||f_3|| =  \max \{f_3(s) : s \in [\alpha_2,\alpha_2+3\epsilon_2 ]\}$.

Now take $\alpha$ such that  $ \alpha_2+2\epsilon_2 <\alpha < \alpha_2+3\epsilon_2$ and let $u$ the solution with this initial condition and $r_\epsilon$ the radius where $u(r_\epsilon)=\alpha_2+\epsilon_2$. Then for $r\leq r_\epsilon$
$$|r^{N-1}u'(r)|=\left|\int_{0}^rt^{N-1}A_3^2f_3\ dt\right|\le A_3^2||f_3|| \frac{r^N}{N} $$
 and $$|u'(r)|\le A_3^2||f_3|| \frac{r}{N} $$
hence integrating in $(0,r_\epsilon)$  we find that
$$ \epsilon_2< \alpha -(\alpha_2+\epsilon_2)\le A_3^2||f_3|| \frac{r_\epsilon}{N}. $$
Choose $A_3$ sufficiently small such that $r_\epsilon>K$. As $r$ is increasing, $r>K$ for  $ u(r)=\alpha_*$   and so $\alpha$ is in ${\mathcal N}$.
As  ${\mathcal  P}$ and  ${\mathcal N}$ are open disjoint sets, there is a ground state in the interval $( \alpha_2,\alpha )$.

\section{Proof of Theorem \ref{multiple} }\label{fk}

Following the ideas in the proofs of Theorems \ref{second solution} and \ref{solution f_3}  we will prove  for each $i$ the existence of the constants $\epsilon_{i-1},\ A_i$ and $\alpha_i$ such that  $\alpha_i \in \mathcal  P$ if $i$ is even and $\alpha_i \in \mathcal  N$ if $i$ is odd. As   $\mathcal  P$  and $ \mathcal  N$
are open and disjoint, there is an $\alpha $ in between two consecutive $\alpha_i $ that gives rise to a ground state solution.

Let $\alpha_0 $ be  such that  if  $ I=[ \alpha_*, \alpha_0 ]$, $||f||_+ = \max \{|f(s)| : s \in I \}$ and  $||f||_- = \min \{|f(s)| : s \in I \}$ then

$$ \frac{||f_i||_+}{||f_i||_-} \leq 3/2 $$
for all $i$.

Take $$\epsilon_0=\frac{ \min \{(\alpha_0-\alpha_*), ((\alpha_*-b)(N-2)/6)\}}{(3N)^k}$$

For our method to work we will actually prove recursively the following statement:
\medskip

\noindent {\bf Claim:} There exist $\epsilon_{i-1}$, $\alpha_i$ and $A_i$ such that $\alpha_{i-1}+\epsilon_{i-1}\leq \alpha_i\leq \alpha_*+\epsilon_0 (3N)^i$ and  $\alpha_i \in \mathcal  P$ if $i$ is even and $\alpha_i \in \mathcal  N$ if $i$ is odd.
\medskip

 For $i=2$  see Proposition \ref{enP-Agrande} and choose $\alpha_2\leq \alpha_*+\epsilon_0(3N)^2$.
 We will also use the following result.

\begin{lemma}\label{delta}
Let $u$ be the solution to the initial value problem
\begin{eqnarray}\label{g}
\begin{gathered}
u''+\frac{N-1}{r}u'+g(u)=0,\quad r>0,\\
u(0)=\bar\alpha +\delta,\quad u'(0)=0
\end{gathered}
\end{eqnarray}
where $g$ is a positive continuous function defined in $[\bar\alpha,\bar\alpha+\delta]$ where $\delta>0$. If $\bar r$ is defined by  $u(\bar r)=\bar\alpha$,
 $||g||_+ = \max \{g(s) : s \in [\bar\alpha,\bar\alpha +\delta]\}$ and $||g||_- = \min \{g(s) : s \in [\bar\alpha,\bar\alpha +\delta] \},$
then
\begin{enumerate}
\item[i)]
 $$\sqrt{\frac{2N\delta }{||g||_+}}\leq \bar r\leq \sqrt{\frac{2N\delta }{||g||_-}}$$
\item[ii)]
$$ 2\delta \frac{||g||_- }{||g||_ +}  \leq \bar r|u'(\bar r)|\leq 2\delta\frac{||g||_+ }{||g||_ -} $$
\end{enumerate}
\end{lemma}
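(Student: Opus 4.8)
The plan is to estimate the radius $\bar r$ at which $u$ descends from $\bar\alpha+\delta$ back down to $\bar\alpha$, and the quantity $\bar r|u'(\bar r)|$, by direct integration of the radial equation, using only the crude bounds $\|g\|_-\le g(u)\le\|g\|_+$ that hold because $u$ stays in $[\bar\alpha,\bar\alpha+\delta]$ on $[0,\bar r]$. First I would integrate the equation $(r^{N-1}u')'=-r^{N-1}g(u)$ once from $0$ to $r$, using $u'(0)=0$, to get
\begin{equation}\label{onestep}
r^{N-1}u'(r)=-\int_0^r t^{N-1}g(u(t))\,dt,
\end{equation}
which immediately yields the two-sided bound $-\|g\|_+\,\tfrac{r}{N}\le u'(r)\le -\|g\|_-\,\tfrac{r}{N}$ on $(0,\bar r]$, since $g(u)\in[\|g\|_-,\|g\|_+]$ there. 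Note this also shows $u'<0$ on $(0,\bar r]$, so $\bar r$ is well defined.

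For part (i), I would integrate these bounds on $u'$ once more from $0$ to $\bar r$ and use $u(0)-u(\bar r)=\delta$. Integrating the upper bound on $u'$ gives $\delta=u(0)-u(\bar r)\le \|g\|_+\,\tfrac{\bar r^2}{2N}$, which rearranges to the lower bound $\bar r\ge\sqrt{2N\delta/\|g\|_+}$; integrating the lower bound on $u'$ gives $\delta\ge\|g\|_-\,\tfrac{\bar r^2}{2N}$, which gives the upper bound $\bar r\le\sqrt{2N\delta/\|g\|_-}$. This is the clean half of the argument.

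For part (ii), the strategy is to combine the pointwise bounds on $|u'|$ with the radius bounds from (i), evaluated at $r=\bar r$. From $|u'(r)|\le\|g\|_+\tfrac{r}{N}$ at $r=\bar r$ I get $\bar r|u'(\bar r)|\le\|g\|_+\tfrac{\bar r^2}{N}$, and then substituting the upper bound $\bar r^2\le 2N\delta/\|g\|_-$ from (i) yields $\bar r|u'(\bar r)|\le 2\delta\,\|g\|_+/\|g\|_-$. Symmetrically, from $|u'(\bar r)|\ge\|g\|_-\tfrac{\bar r}{N}$ and the lower bound $\bar r^2\ge 2N\delta/\|g\|_+$ I obtain $\bar r|u'(\bar r)|\ge 2\delta\,\|g\|_-/\|g\|_+$. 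Together these give the claimed inequalities in (ii).

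I do not anticipate a genuine obstacle here: the lemma is a soft a priori estimate whose only ingredient is that $u$ is monotone decreasing and confined to the thin slab $[\bar\alpha,\bar\alpha+\delta]$, on which $g$ is trapped between $\|g\|_-$ and $\|g\|_+$. The one point requiring a word of care is the implicit claim that $u$ stays in this slab and is strictly decreasing all the way down to $\bar\alpha$ before anything else happens; this follows because \eqref{onestep} forces $u'<0$ as long as $u\ge\bar\alpha$ (where $g>0$), so $u$ decreases monotonically and $\bar r$ is reached in finite radius by the lower bound in (i). The matching of the exponents $2N$ versus $N$ in the two parts is the only place to double-check the arithmetic, but it is routine.
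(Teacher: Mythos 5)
Your proposal is correct and follows essentially the same route as the paper's proof: integrate $(r^{N-1}u')'=-r^{N-1}g(u)$ once to trap $|u'(r)|$ between $\|g\|_{\pm}\,r/N$, integrate again to bound $\bar r$, and combine the two at $r=\bar r$ for part (ii). Your added remark that $u'<0$ while $u\ge\bar\alpha$ (so $\bar r$ is well defined) is a small point the paper leaves implicit, but otherwise the arguments coincide.
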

\begin{proof}
 Writing  \eqref{g}  as
$$-(r^{N-1}u'(r))'=r^{N-1}g(u),$$
and integrating over $(0,r)$, $r\le\bar r$ we obtain
$$r^{N-1}|u'(r)|=\int_{0}^rt^{N-1}g(u)dt$$
and thus
$$\frac{r}{N}||g||_-  \leq |u'(r)|\leq \frac{r}{N}||g||_+.  $$
Integrating this last inequality over $(0,\bar r)$ we get
$$\frac{(\bar  r)^2}{2N}||g||_-  \leq \delta \leq \frac{(\bar r)^2}{2N}||g||_+ . $$
Hence
$$\sqrt{\frac{2N\delta }{||g||_+}}\leq \bar r\leq \sqrt{\frac{2N\delta }{||g||_-}}$$
and
$$ 2\delta \frac{||g||_- }{||g||_ +} \leq \frac{(\bar r)^2}{N}||g||_-  \leq \bar r|u'(\bar r)|\leq \frac{(\bar r)^2}{N}||g||_+ \leq 2\delta\frac{||g||_+ }{||g||_ -}. $$

 \end{proof}

\begin{proposition}\label{odd}

Let $i$ odd. Given $\epsilon_{j-1},\ A_j$ and $\alpha_j$ satisfying the above Claim for $j=2,\ldots,i-1$,  Then the Claim is also satisfied for $i$.
 \end{proposition}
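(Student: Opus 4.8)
The plan is to adapt the argument used for Theorem~\ref{solution f_3}. Since $i$ is odd we must place $\alpha_i$ in $\mathcal N$, and the mechanism is to choose the magnitude constant $A_i$ so small that the solution starting at $\alpha_i$ decays so slowly that it only reaches the level $\alpha_*$ after a radius larger than the universal threshold $K=K(f_1)$ furnished by \cite[Lemma 3.1]{gst}: once a solution attains $\alpha_*$ at a radius exceeding $K$ it must change sign, i.e.\ belong to $\mathcal N$. The inductive data provide $\alpha_{i-1}\in\mathcal P$ (since $i-1$ is even) together with the bound $\alpha_{i-1}\le \alpha_*+\epsilon_0(3N)^{i-1}$.

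First I would fix the placement. Since the admissible window $[\alpha_{i-1},\,\alpha_*+\epsilon_0(3N)^i]$ has length at least $\epsilon_0(3N)^{i-1}(3N-1)>0$, I choose $\epsilon_{i-1}>0$ and an increment $\delta>0$ small enough that $\alpha_i:=\alpha_{i-1}+\epsilon_{i-1}+\delta$ satisfies $\alpha_{i-1}+\epsilon_{i-1}\le\alpha_i\le\alpha_*+\epsilon_0(3N)^i$. In particular $\alpha_i\in I=[\alpha_*,\alpha_0]$, so the ratio bound $||f_i||_+/||f_i||_-\le 3/2$ stays in force, and $L_{i-1}$ can be taken as the line rendering $f$ continuous, which is positive on the transition interval because both its endpoint values are positive.

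Next I would exploit the slow decay. Let $u$ solve \eqref{ivp} with $u(0)=\alpha_i$, $u'(0)=0$; on $[\alpha_{i-1}+\epsilon_{i-1},\alpha_i]$ one has $f=A_i^2f_i$, so Lemma~\ref{delta}(i) applied with $g=A_i^2f_i$, $\bar\alpha=\alpha_{i-1}+\epsilon_{i-1}$ and $\delta=\alpha_i-(\alpha_{i-1}+\epsilon_{i-1})$ yields, for the radius $\bar r$ with $u(\bar r)=\alpha_{i-1}+\epsilon_{i-1}$, the bound $\bar r\ge \frac{1}{A_i}\sqrt{2N\delta/||f_i||_+}$, which blows up as $A_i\to 0^+$. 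Choosing $A_i$ small enough that $\bar r>K$, and noting that $f(u)>0$ for all $u\in(b,\alpha_i)$ forces $u'<0$ and hence strict monotone decrease while $u>b$, the solution attains $\alpha_*$ at a radius $r_*\ge\bar r>K$ (because $\alpha_*<\alpha_{i-1}+\epsilon_{i-1}$). By \cite[Lemma 3.1]{gst} this gives $\alpha_i\in\mathcal N$, which is precisely the Claim for $i$.

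The genuinely substantive step is this scaling estimate combined with the Gazzola--Serrin--Tang threshold; the placement bookkeeping of the second paragraph is routine. The only point demanding a little care is confirming that, below $\alpha_{i-1}+\epsilon_{i-1}$, the solution keeps descending monotonically all the way down to $\alpha_*$—despite the large constants $A_j$ active in the lower strips—so that $r_*$ is well defined and $r_*\ge\bar r$; this is immediate from the positivity of $f$ on $(b,\alpha_i)$, which keeps $r^{N-1}u'(r)$ strictly decreasing and thus $u'$ negative until $u=b<\alpha_*$.
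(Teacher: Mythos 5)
Your proposal is correct and takes essentially the same route as the paper's proof: both apply Lemma~\ref{delta}(i) on the top strip $[\alpha_{i-1}+\epsilon_{i-1},\alpha_i]$ to obtain an exit radius bounded below by a constant times $A_i^{-1}$, choose $A_i$ small enough that this radius exceeds the threshold $K(f_1)$ of \cite[Lemma 3.1]{gst} (monotone decrease of $u$ down to $\alpha_*$ giving $r_*>K$, hence $\alpha_i\in\mathcal N$), and verify the placement bound $\alpha_i\le\alpha_*+\epsilon_0(3N)^i$ as bookkeeping. The only, immaterial, difference is your placement $\alpha_i=\alpha_{i-1}+\epsilon_{i-1}+\delta$ with $\delta$ small, where the paper fixes $\epsilon_{i-1}=\epsilon_0$ and takes $\alpha_i\in(\alpha_{i-1}+2\epsilon_0,\alpha_{i-1}+3\epsilon_0)$.
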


\begin{proof}
As in the proof of Theorem \ref{solution f_3} we will look for an $\alpha$ so that the solution with $u(0)=\alpha$  reaches  $\alpha_*$ at $ u(r)=\alpha_*$, with $r>K(f_1)$. We will prove that such  $\alpha$ exists, choosing $\alpha_i$ and $A_i$ .

 Let $\epsilon_{i-1}= \epsilon_0 $,
let $\alpha_i$ such that  $\alpha_{i-1}+2\epsilon_0<\alpha_i <\alpha_{i-1}+3\epsilon_0 $ and let $u$ be the solution to \eqref{ivp} with $r_0=0$ with this initial condition, that is $u(0)=\alpha_i,\ u'(0)=0$.  By Lemma \ref{delta}

 $$ r_{i} \geq \sqrt{\frac{ 2N(\alpha_i - (\alpha_{i-1}+\epsilon_0))}{ A_i^2||f_i||_+}}>\sqrt{\frac{ 2N\epsilon_0}{ A_i^2||f_i||_+}}$$
where $u(r_i)=\alpha_{i-1}+\epsilon_0$.
Choose $A_i$ sufficiently small such that $r_i>K$. As $r$ is increasing, $r>K$ for  $ u(r)=\alpha_*$   and so $\alpha$ is in ${\mathcal N}$.

On the other hand

 $$ \alpha_i < \alpha_{i-1}+3\epsilon_0 < \alpha_*+\epsilon_0 (3N)^{i-1}+3\epsilon_0 <\alpha_*+\epsilon_0 (3N)^{i},$$
proving the proposition.
\end{proof}

\begin{proposition}\label{even}

Let $i$ even. Given $\epsilon_{j-1},\ A_j$ and $\alpha_j$ satisfying the above Claim for $j=2,\ldots,i-1$,  Then the Claim is also satisfied for $i$.
 \end{proposition}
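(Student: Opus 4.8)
The plan is to adapt the proof of Proposition \ref{enP-Agrande} to the present multi-layer setting, using the scaling $v(r)=u(r/A_i)$ to neutralize the large factor $A_i$ on the top layer and then invoking Proposition \ref{EnP-f1}. Concretely, I would set $\epsilon_{i-1}=\epsilon_0$, pick $\alpha_i$ in the admissible window $(\alpha_{i-1}+\epsilon_{i-1},\alpha_*+\epsilon_0(3N)^i]$, take $A_i$ large, and let $u$ be the solution of \eqref{ivp} with $u(0)=\alpha_i$, $u'(0)=0$. Writing $\delta_A:=\alpha_i-\alpha_{i-1}-\epsilon_{i-1}$ for the width of the top layer, on which $f=A_i^2f_i$, the goal is to show that $u$ reaches $\alpha_*$ at some $r_*<R$ (the constant of Proposition \ref{EnP-f1}) with $\bar a\le r_*|u'(r_*)|\le\bar b$, where $\bar b=(\alpha_*-b)(N-2)/2$ as in Proposition \ref{enP-Agrande}. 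Proposition \ref{EnP-f1} then gives $u(r)>0$ for all $r$, i.e. $\alpha_i\in\mathcal P$, as required for $i$ even.

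First I would treat the top layer through the scaling $v(r)=u(r/A_i)$, which solves $v''+\frac{N-1}{r}v'+\frac1{A_i^2}f(v)=0$; on the top layer this is exactly the equation with nonlinearity $f_i$, independent of $A_i$. Starting from rest at $\alpha_i$, Lemma \ref{delta} shows $v$ crosses the top layer, reaching $\alpha_{i-1}+\epsilon_{i-1}$ at some radius $\rho_0$ with $\rho_0|v'(\rho_0)|\in[2\delta_A\|f_i\|_-/\|f_i\|_+,\,2\delta_A\|f_i\|_+/\|f_i\|_-]$; the ratio bound $\|f_i\|_+/\|f_i\|_-\le 3/2$ on $I=[\alpha_*,\alpha_0]$ then confines this quantity to $[\tfrac43\delta_A,\,3\delta_A]$. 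Crossing the thin transition $L_{i-1}$ contributes only a perturbation controlled by Lemma \ref{epsilon}; exactly as in Proposition \ref{enP-Agrande}, this perturbation is $A_i$-independent, since the factor $(r^u_\epsilon)^2\sim A_i^{-2}$ cancels the size $\sim A_i^2$ of the nonlinearity there, so it is made negligible by taking $\epsilon_{i-1}$ small.

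The genuinely new feature, and the main obstacle, is controlling the descent from $\alpha_{i-1}+\epsilon_{i-1}$ down to $\alpha_*$ through all the previously fixed lower layers. In the scaled variable the nonlinearity on a lower layer is $\frac{A_j^2}{A_i^2}f_j$, which tends to $0$ as $A_i\to\infty$ because the $A_j$ with $j<i$ are already fixed; hence on every bounded $\rho$-interval $v$ converges, as $A_i\to\infty$, to the free ``coasting'' solution with $\rho^{N-1}v'\equiv\rho_0^{N-1}v'(\rho_0)$, whose total available descent below $\alpha_{i-1}+\epsilon_{i-1}$ is exactly $\rho_0|v'(\rho_0)|/(N-2)$. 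The decisive point is therefore the budget inequality
$$\frac{\rho_0|v'(\rho_0)|}{N-2}>\alpha_{i-1}+\epsilon_{i-1}-\alpha_*,$$
which guarantees that, for $A_i$ large, $v$ reaches $\alpha_*$ at a bounded radius $\rho_*$ while still essentially coasting, so that $r_*=\rho_*/A_i<R$ and $r_*|u'(r_*)|=\rho_*|v'(\rho_*)|$ stays bounded. Using $\rho_0|v'(\rho_0)|\ge\tfrac43\delta_A$ and $\alpha_{i-1}+\epsilon_{i-1}-\alpha_*\le\epsilon_0(3N)^{i-1}+\epsilon_0$, this reduces to a lower bound of the form $\delta_A\gtrsim\tfrac{3(N-2)}{4}\epsilon_0(3N)^{i-1}$; since the window permits $\delta_A$ up to about $\epsilon_0(3N)^{i-1}(3N-1)$ and $3N-1>\tfrac{3(N-2)}{4}$, a genuine range of admissible $\delta_A$ exists.

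Finally I would pin down the velocity and conclude. In the coasting limit $\rho|v'(\rho)|$ is decreasing, so $r_*|u'(r_*)|=\rho_*|v'(\rho_*)|\le\rho_0|v'(\rho_0)|\le3\delta_A$; and the definition of $\epsilon_0$ forces $\delta_A\le\epsilon_0(3N)^i\le(\alpha_*-b)(N-2)/6$, whence $r_*|u'(r_*)|\le\bar b$ automatically. For the lower bound, $\rho_*|v'(\rho_*)|$ depends continuously on $\alpha_i$ and increases from $0$ (attained at the budget threshold, where $\rho_*\to\infty$) as $\delta_A$ grows, so choosing $\bar a$ small and then selecting $\alpha_i$ within the admissible range places $\rho_*|v'(\rho_*)|$ in $(\bar a,\bar b)$; this verifies the hypotheses of Proposition \ref{EnP-f1}, while the bookkeeping above keeps $\alpha_i\le\alpha_*+\epsilon_0(3N)^i$, so the Claim holds for $i$. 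The hard part to execute carefully is making the passage to the coasting limit \emph{uniform}, so that for a single large $A_i$ the statements ``$v$ reaches $\alpha_*$ at bounded radius'' and the two-sided control of $\rho_*|v'(\rho_*)|$ hold simultaneously, the point being that insufficient budget would instead let the residual forcing drive $v$ down only at large radius and large speed, destroying the bound $r_*|u'(r_*)|\le\bar b$.
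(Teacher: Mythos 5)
Your strategy is essentially the paper's: the same parameter bookkeeping ($\alpha_i\le\alpha_*+\epsilon_0(3N)^i$, small $\epsilon_{i-1}$, large $A_i$), the same use of Lemma \ref{delta} on the top layer and Lemma \ref{epsilon} on the thin transition, the same ``budget'' mechanism (momentum $r|u'|$ acquired in the top layer versus the remaining width $\alpha_{i-1}+\epsilon_{i-1}-\alpha_*$), and the same conclusion via Proposition \ref{EnP-f1}. The one genuine divergence is the descent from $\alpha_{i-1}$ to $\alpha_*$: you pass to a coasting limit as $A_i\to\infty$ and supplement it with a continuity-in-$\alpha_i$ argument to place $r_*|u'(r_*)|$ above $\bar a$, and you correctly flag the uniformity of this limit as the unresolved hard part. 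The paper needs neither limit nor continuity: it simply applies Lemma \ref{epsilon} once more on $[\alpha_*,\alpha_{i-1}]$, where the nonlinearity is bounded by the \emph{fixed} constant $C_i=\max\{\|f_1\|_+,A_2^2\|f_2\|_+,\dots,A_{i-1}^2\|f_{i-1}\|_+\}$ while the entry radius satisfies $\bar r\le 2c_i/A_i$; the perturbation term $2\bar r^2C_i\,\epsilon/((N-2)a)$ is then $O(A_i^{-2})$, so for each fixed large $A_i$ one gets the explicit two-sided bound $(N-2)a\le r_*|u'(r_*)|<\bar b$ and $r_*\le 4c_i/A_i<R$ directly from the lemma. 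Your continuity argument is exactly the soft spot you sensed: near the budget threshold the exit radius blows up and all estimates degenerate, and uniformity in $A_i$ would have to be proved. The paper sidesteps this by staying well away from the threshold, taking $\alpha_i$ at the \emph{top} of the window, $\alpha_i=\alpha_*+\epsilon_0(3N)^i$, so the momentum entering the lower region exceeds $4(N-2)((3N)^{i-1}+1)\epsilon_0$, i.e.\ four times $(N-2)$ times the remaining width; this margin is needed because each application of Lemma \ref{epsilon} may halve $r|u'|$, so your sharp coasting constant $\tfrac{3(N-2)}{4}\epsilon_0(3N)^{i-1}$ should be replaced by a margin of order $3(N-2)((3N)^{i-1}+1)\epsilon_0$ --- the window still accommodates it since $3N-1>3(N-2)$, so your existence-of-range conclusion survives.

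Two bookkeeping points you omit, both used in the paper's proof: $\epsilon_{i-1}$ must be chosen $<\min\{\epsilon_0,\ (\alpha_*-b)(N-2)/(4C)\}$ (your opening choice $\epsilon_{i-1}=\epsilon_0$ is the odd-case choice; smallness of $\epsilon_{i-1}$ is what caps $\bar r|u'(\bar r)|$ by $\tfrac34(\alpha_*-b)(N-2)$ after the transition), and one needs $A_i^2\ge A_{i-1}^2\|f_{i-1}\|_+/\|f_i\|_+$ so that on the transition segment $L_{i-1}$ the nonlinearity is dominated by $A_i^2\|f_i\|_+$, keeping the perturbation there $A_i$-independent as you assert.
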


\begin{proof} Let $\alpha_{i}= \alpha_*+\epsilon_0 (3N)^{i}$, we  will use Proposition \ref{EnP-f1} to prove that $\alpha_i\in \mathcal  P$, for this we need bounds on $r_*$ and $r_*|u'(r_*)|$, where we recall $r_*$ is the point such that $u(r_*)=\alpha_*$.

Let $\epsilon_{i-1}<\epsilon_0$ to be determined later, and let $u$ be the solution with initial condition $u'(0)=0$, $u(0)=\alpha_i$. Let $r_i$ such that  $u(r_i)=\alpha_{i-1}+\epsilon_{i-1}$, see Figure \ref{casoi}.
\begin{figure}[h]
  \includegraphics[scale=1]{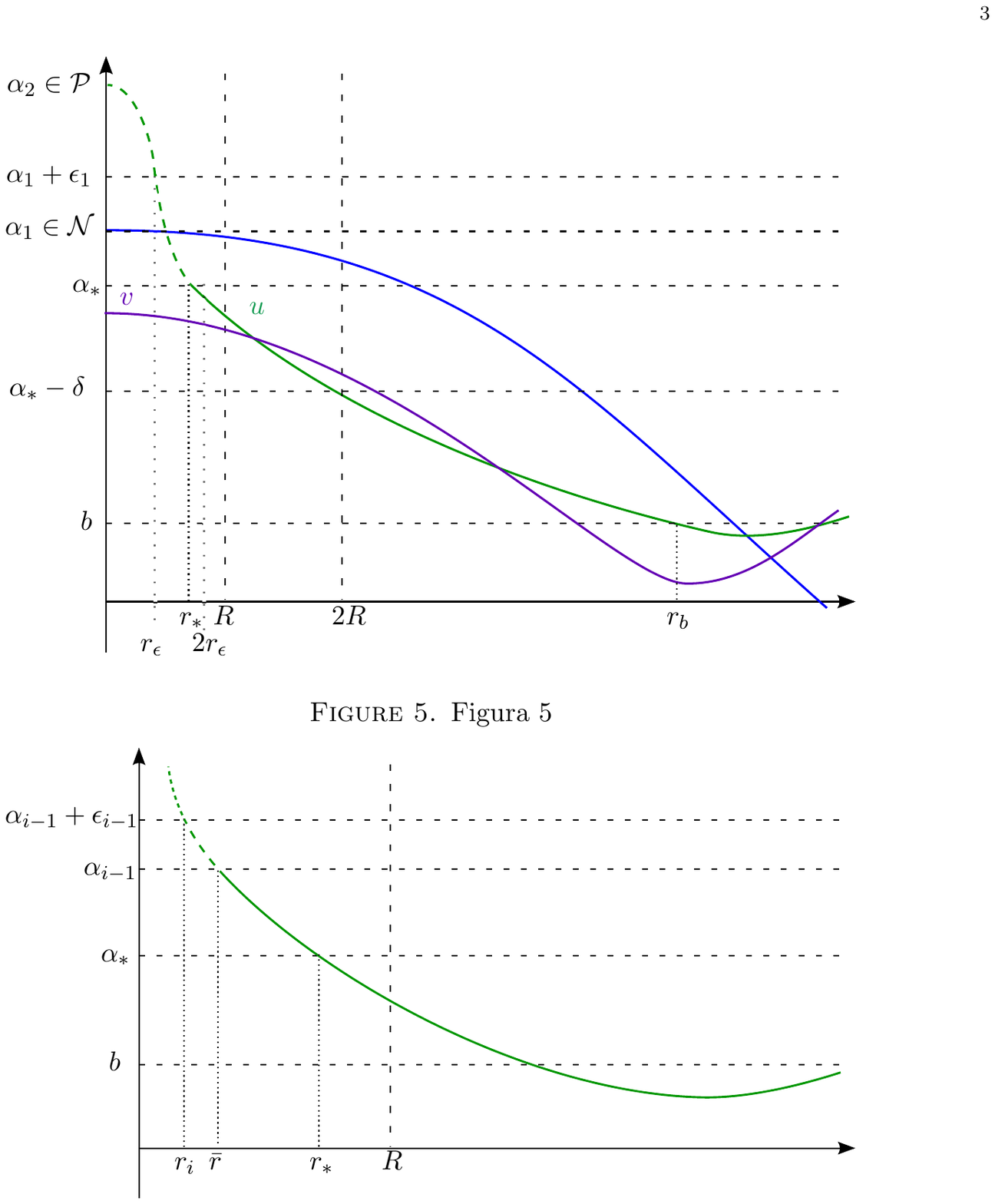}
  \caption{ }\label{casoi}
\end{figure}

\noindent {\bf Step 1. } [Lower bound] We will prove that
$$ r_i|u'( r_i)|> 4 (N-2)(\alpha_{i-1}+\epsilon_0-\alpha_*)\epsilon_0 $$
and
$$ r|u'( r)|>2 (N-2) ((3N)^{i-1}+1)\epsilon_0> 2(N-2)(\alpha_{i-1}+\epsilon_0-\alpha_*)>0\quad\mbox{ for all $ r\in [r_i, r_*]$.} $$
Indeed, we have
$$(\alpha_i-(\alpha_{i-1}+\epsilon_{i-1}) )>((3N-1) (3N)^{i-1}-1)\epsilon_0 >
3 (N-2)((3N)^{i-1}+1)\epsilon_0,$$
then as $\frac{||A_i^2f_i||_- }{||A_i^2f_i||_ +}=\frac{||f_i||_- }{||f_i||_ +}\geq 2/3$, using  Lemma \ref{delta},
$$ r_i|u'( r_i)|>4 (N-2) ((3N)^{i-1}+1)\epsilon_0> 4 (N-2)(\alpha_{i-1}+\epsilon_0-\alpha_*). $$
Now Using  Lemma \ref{epsilon}, it holds that for any $ r\in (r_i, r_*]$
$$ r|u'(r)|>2 (N-2) ((3N)^{i-1}+1)\epsilon_0> 2(N-2)(\alpha_{i-1}+\epsilon_0-\alpha_*)>0. $$
\medskip

\noindent {\bf Step 2. } [Upper bounds at $\alpha_{i-1}$ ]
Let $\bar r$ such that $u(\bar r)=\alpha_{i-1} $.
We will prove that
$$\bar r|u'( \bar r)|<\frac{3(\alpha_* -b)(N-2)}{4},$$ and $$\bar r \leq 2 \frac{c_i}{A_i}$$
for some positive constant $c_i$ independent of $A_i$.

Indeed, by Lemma \ref{delta}\ i) we can see that $$r_i \leq \frac{c_i}{A_i}$$ where $c_i$ is independent of $A_i$
and
$$ r_i|u'( r_i)|< 2(\alpha_i-(\alpha_{i-1}+\epsilon_{i-1}) )\frac{||f_i||_+ }{||f_i||_ -}< 3  (\epsilon_0 (3N)^{k})\leq (\alpha_*-b)(N-2)/2,$$
which is also independent of $A_i$.

Let  $A^2_i\geq A^2_{i-1}||f_{i-1}||_{+}/||f_{i}||_{+}$ to be determined later. Then  the function $f$ for $s \in [\alpha_{i-1},\alpha_{i}] $ satisfies
$$||f||\leq \max\{ A^2_{i-1}||f_{i-1}||_{+}, A^2_{i}||f_{i}||_{+}\}= A^2_{i}||f_{i}||_{+} $$

We can use Lemma \ref{epsilon} on $[\alpha_{i-1},\alpha_{i-1}+\epsilon_{i-1}]$, since from Step 1 we know that $r_i|u'( r_i)|$ is bounded from below. We obtain that
$$\bar r \leq 2r_{i} \leq 2 \frac{c_i}{A_i}$$
and we can find a constant $C$, independent of $A_i$, such that   $$\bar r|u'( \bar r)|<(\alpha_*-b)(N-2)/2+\frac{2r^2_i  A^2_{i}||f_{i}||_{+}}{2(N-2) \left((3N)^{i-1}+1\right)\epsilon_0}\epsilon_{i-1}< (\alpha_*-b)(N-2)/2+C\epsilon_{i-1}. $$
Choosing $\epsilon_{i-1}< \min\{\epsilon_0,\frac{(\alpha_*-b)(N-2)}{4C}\}$ we get
$$\bar r|u'( \bar r)|<\frac{3(\alpha_* -b)(N-2)}{4}.$$

\noindent {\bf Step 3. }   [Upper bounds at $\alpha_{*}$ ] \\ Let now $r_*$ be such that $u(r_*)=\alpha_*$ and  $C_i = \max \{||f_1(s)||_{+}, A^2_2||f_2(s)||_{+}, \dots , A^2_{i-1}||f_{i-1}(s)||_{+} \}$. We use again Lemma \ref{epsilon},  this time on $[\alpha_{*},\alpha_{i-1}]$, to obtain
$$ r_*|u'(  r_*)|<\bar r|u'( \bar r)|+\frac{2\bar r^2 C_i}{(N-2)((3N)^{i-1}+1)\epsilon_0 }\leq \frac{3(\alpha_*-b)(N-2)}{4}+\frac{2 \bar r^2 C_i}{(N-2)((3N)^{i-1}+1)\epsilon_0 }$$
$$\leq \frac{3(\alpha_*-b)(N-2)}{4}+\frac{ 2C_i}{(N-2)((3N)^{i-1}+1)\epsilon_0 }\left(2\frac{c_i}{A_i}\right)^2$$
and $$ r_*\leq 2\bar r \leq 4 \frac{c_i}{A_i}.$$
Hence for $A_i$ big enough we have the hypothesis of Proposition \ref{EnP-f1}, and thus $\alpha_i \in {\mathcal  P}$ .

\end{proof}

\section{Examples}\label{examples}

In this section we want to present some examples that illustrate the scope of the functions considered in Theorem \ref{second solution}, and the different behavior of solutions with large initial condition depending on the behavior of $f$ near infinity. Note that conditions $(H_1)-(H_4)$ are satisfied for $f_1(s)= s^p-s$ with $1<p<\frac{N+2}{N-2}$ subcritical and $N>2$. We will use these functions $f_1$ to construct our examples.

\subsection*{Example 1}
We first point out that for Theorem \ref{second solution} the only condition on $f_2$ is $(H_5)$, that requires $f_2$ to be positive and continuous. Thus the theorem applies to any such function, for example $f_2(s)=2+\sin (s)$. \\

\medskip
We will now consider the behavior of $f$ near infinity, and note that the function $f_2(s)=s^q$ will satisfy condition $(H_6)$ if  $q<\frac{N+2}{N-2}$ (subcritical), and will not satisfy it  if  $q>\frac{N+2}{N-2}$ (supercritical). We will use these functions to illustrate the expected behavior of solutions with  large initial condition in both cases.

We consider functions of the  following form with $1<p<\frac{N+2}{N-2}$
 \begin{eqnarray}\label{f ex}
f(s)=\begin{cases}
f_1(s)=s^p-s &  s\leq \alpha_*+\epsilon_1 =\alpha_1\\
L_1(s) & \alpha_1\leq s\leq \alpha_1+\epsilon_1\\
A_2^2f_2(s)=A_2^2s^q &  s\geq \alpha_1+\epsilon_1
\end{cases}
\end{eqnarray}
where $\alpha_*$  as in $(H_4)$ and $L_1(s)$ is the line from $(\alpha_1, f_1(\alpha_1))$ to $(\alpha_1+\epsilon_1, A_2^2f_2(\alpha_1+\epsilon_1))$.

\subsection*{Example 2}

If both $p, q<\frac{N+2}{N-2}$ this function satisfies $(H_1)-(H_6)$, an thus by Theorem \ref{third solution}, for small enough $\epsilon$ and big enough $A$, has at least three ground state solutions.  This is particularly interesting when we consider $p=q$ and compare it to the problem with $f= s^p-s$ that has a unique ground state solution, as pointed out in Proposition \ref{open-sets}.\\

In computer experiments we can see that the third solution is not necessarily too big. For $N=4$, $p=q=2$, $A_2=10$ and $\epsilon_1=1/10$, we can see there should be three solutions with $\alpha<30$ (see  Figure \ref{nuevo}).

\begin{figure}[h]
  \includegraphics[scale=1]{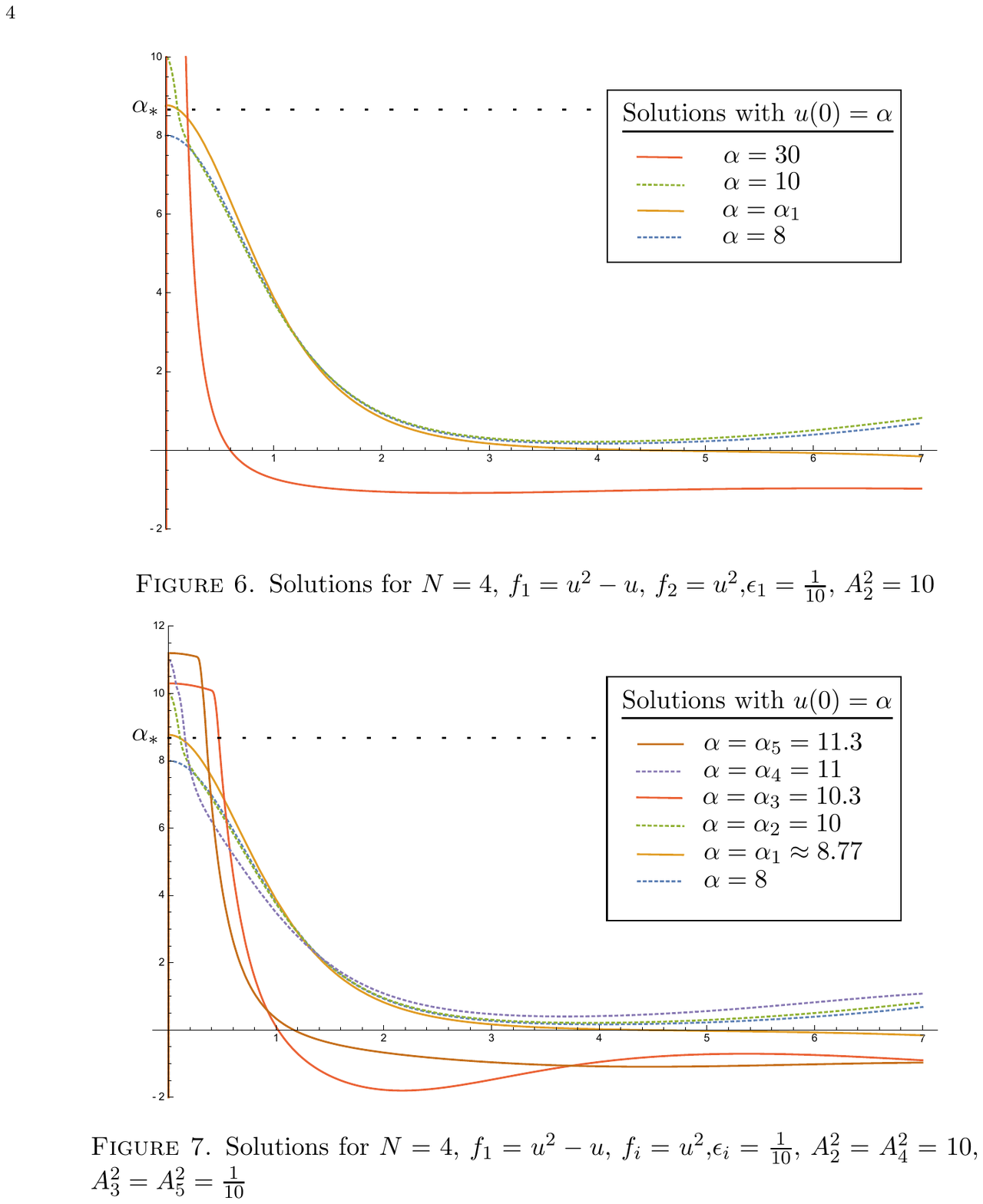}
  \caption{Solutions for $N=4$, $p=q=2$, $A_2=10$ and $\epsilon_1=1/10$.}\label{nuevo}
\end{figure}

\medskip

In Theorem \ref{third solution} we give a sub-critical  condition on $f_2$, to obtain that if $\epsilon$  is sufficiently small, $A$ and $\alpha $ big enough then $\alpha \in {\mathcal N}$, and from this fact we obtain a third
ground state.  The following example shows that some condition on $f_2$ is necessary to obtain that  $\alpha \in {\mathcal N}$ for large $\alpha$.

\subsection*{Example 3}
Let $N=4$, $p=2<\frac{N+2}{N-2}=3$ and  $q>\frac{N+2}{N-2}$ and $f$ as in $\eqref{f ex}$. This function satisfies $(H_1)-(H_5)$ but not $(H_6)$, and we will see that if $\epsilon$ is sufficiently small, $A$ and $\alpha$ big enough, then $\alpha \in {\mathcal P}$.

\begin{proof}

We will first see that $\alpha_*>3$. To this end  we  recall a Pohozaev identity which was obtained
by Ni and Serrin \cite{ni-serrin}.
$$E(r)=r^N (  u'^2 +2F_1(u)) +(N-2) r^{(N-1)}u'u)$$
$$E'(r)=r^{(N-1)} (  2N F_1(u) +(N-2) f(u) u)$$
where $u$ is the solution to \eqref{ivp} with  initial condition  $u(0)=3$ and $u'(0)=0$.
It's easy to check that $( 2N F_1(u) +(N-2) f(u) u)< 0 $ if $ u < 3 $, thus $E(r)< 0$ for $r>0$.  If $u$ changes sign, in the point it reaches $0$ we have $E(r)=r^N u'^2 \geq 0$, a contradiction. Thus $3\in {\mathcal P}$ and by Proposition \ref{open-sets} $ (ii)$, $\alpha_*>3$.

Our second step is some observations on the solutions of

\begin{eqnarray}\label{q}
\begin{gathered}
u''+\frac{N-1}{r}u'+A^2 u^q=0,\quad r>0,\\
\end{gathered}
\end{eqnarray}
with  $q>\frac{N+2}{N-2}$, see Miyamoto and Naito in  \cite{miya-naito1,miya-naito2}.

The singular solution of this equation, $v_A$,  i.e. the classical solution in $(0,\infty)$ such that $\lim\limits_{r\to 0} v_A(r)=\infty$, is (see Serrin and Zou \cite[Proposition 3.1]{serrin-zou}),
$$v_A(r)=C(N,q) A{^\frac{-2}{q-1}}r{^\frac{-2}{q-1}},\quad \mbox{where} \quad  C(N, q)= \Bigl(\frac{2}{q-1}\Bigl(N-2-\frac{2}{q-1}\Bigl)\Bigl)^{1/(q-1)},$$
hence $r_A = (C(N,q)){^\frac{(q-1)}{2}} A^{-1} (v_A){^\frac{-(q-1)}{2}}$.
If $u_A$ is a solution of \eqref{q} with initial conditions $u_A(0) =\alpha$ and $u_A'(0) = 0$  then given $\delta>0$ and $0<r_0<r_1$ there is  $\alpha_0$ such that if $\alpha\geq \alpha_0$, and $r_0\leq r\leq r_1$  then $|u_A(r)-v_A(r)|<\delta$ and $|u'_A(r)-v'_A(r)|<\delta$ . From  Miyamoto - Naito we obtain this
for $r_0$ and $r_1$ sufficiently small, from continuity of the solutions we can extend this to compact sets. We will use this result for $r\in [r_0^v, r_1^v]$, where $r_0^v$ and $r_1^v$ are such that $v_A(r_0^v)=\alpha_*+10$ and
 $v_A(r_1^v)=\alpha_*-1$, so that it includes all the values of $r$ we are using.

We now return to our example. For simplicity we will take $q=5$, in this case $C(4,5)=1/2$.
Let  $\epsilon_1 < min\{1/10,  2{^{4}}(\alpha_*+1)^{-5}\}$ and $r_\epsilon$ the radius where  $u_A(r_\epsilon)=\alpha_*+2\epsilon_1$. We will now use  Lemma \ref{epsilon} on the interval $[\alpha_*,\alpha_*+2\epsilon_1]$.

Choose now $0<\delta< 2\epsilon_1/3 $ and $\alpha$ big enough that $|v_A( r_\epsilon)-(\alpha_*+2\epsilon_1)|<\delta$ and $|r_\epsilon u'_A(r_\epsilon)-r_\epsilon v'_A( r_\epsilon)|<\delta$. As $|r_\epsilon v'_A( r_\epsilon) |=\frac{2}{q-1}v_A( r_\epsilon)= \frac{1}{2}v_A( r_\epsilon)$, we have
$$\frac{\alpha_*}{2}< \frac{1}{2}(\alpha_*+2\epsilon_1-\delta) -\delta < r_\epsilon |u'_A(r_\epsilon)|<  \frac{1}{2} (\alpha_*+2\epsilon_1+\delta)+\delta < \frac{\alpha_*}{2} + 2\epsilon_1.$$

 On the other hand, assuming  $A\geq (\alpha_*+2\epsilon)^{2-q}=(\alpha_*+2\epsilon)^{-3}$, we have that  $\epsilon_1 ||f||_+ < 2^{4} A^2$ when $f$ is restricted to $[\alpha_*, \alpha_*+2\epsilon_1]$ and $r_\epsilon^2< 2^{-4}A^{-2} (u_A(r_\epsilon)-\delta)^{-4}= 2^{-4}A^{-2} (\alpha_*+2\epsilon-\delta)^{-4}$.
$$\frac{2 r_\epsilon^2||f||_+}{(N-2)a}\ \epsilon< \frac{1}{a (\alpha_*+2\epsilon-\delta)^{4}}.$$

As $\alpha_*>3$ and $\epsilon<1/10$ we can choose $a = 1/4$ then $2(N-2)a=1<r_\epsilon |u'_A( r_\epsilon)|$ and $2\epsilon <a$,
so by  Lemma \ref{epsilon} we have

  $r_*<2 r_\epsilon < 2(2)^{-2}A^{-1}(\alpha_*+2\epsilon-\delta)^{-2}=\frac{C}{A}$ for a constant $C$  independent of $A$ and $$(N-2)a\le r_*|u_A'(r_*)|\le r_\epsilon|u_A'(r_\epsilon)|+\frac{2r_\epsilon^2||f||_+ }{(N-2)a}\ \epsilon$$
 $$<  \frac{\alpha_*}{2} + 2\epsilon_1 +\frac{4}{3^4} <\frac{\alpha_*}{2} +  \frac{3}{10} $$
  where  $r_*$ is the radius where $u(r_*)=\alpha_*$.

We will finally use Proposition \ref{EnP-f1}.
 We have that for any $A$ if we denote by $\bar a = (N-2)a$ and $ \bar b =\frac{\alpha_*}{2} + \frac{3}{10} $, then
$$\bar a < r_*|u'(r_*)|<  \bar b, $$
and
$$ \bar b =\frac{\alpha_*}{2} +  \frac{3}{10} <(\alpha_*-b) (N-2) =2(\alpha_*-1), $$
We also had $r_*<\frac{C}{A}$.  So , by Proposition   \ref{EnP-f1}, if $A$ is  sufficiently large, then $\alpha \in {\mathcal P}$ for large enough  $\alpha$.
\end{proof}

\subsection*{Example 4}
We finish with a computer simulation for a special case, when $N=4$, $f_1=u^2-u$, $f_i=u^2$ for $i=2,\dots,5$, $\epsilon_i=\frac{1}{10}$, $A_2^2=A_4^2=10$, $A_3^2=A_5^2=\frac{1}{10}$ , (see  Figure \ref{EjA5}). We can see the different behavior of the solutions with $\alpha_i$ with even or odd $i$.

\begin{figure}[h]
  \includegraphics[scale=1]{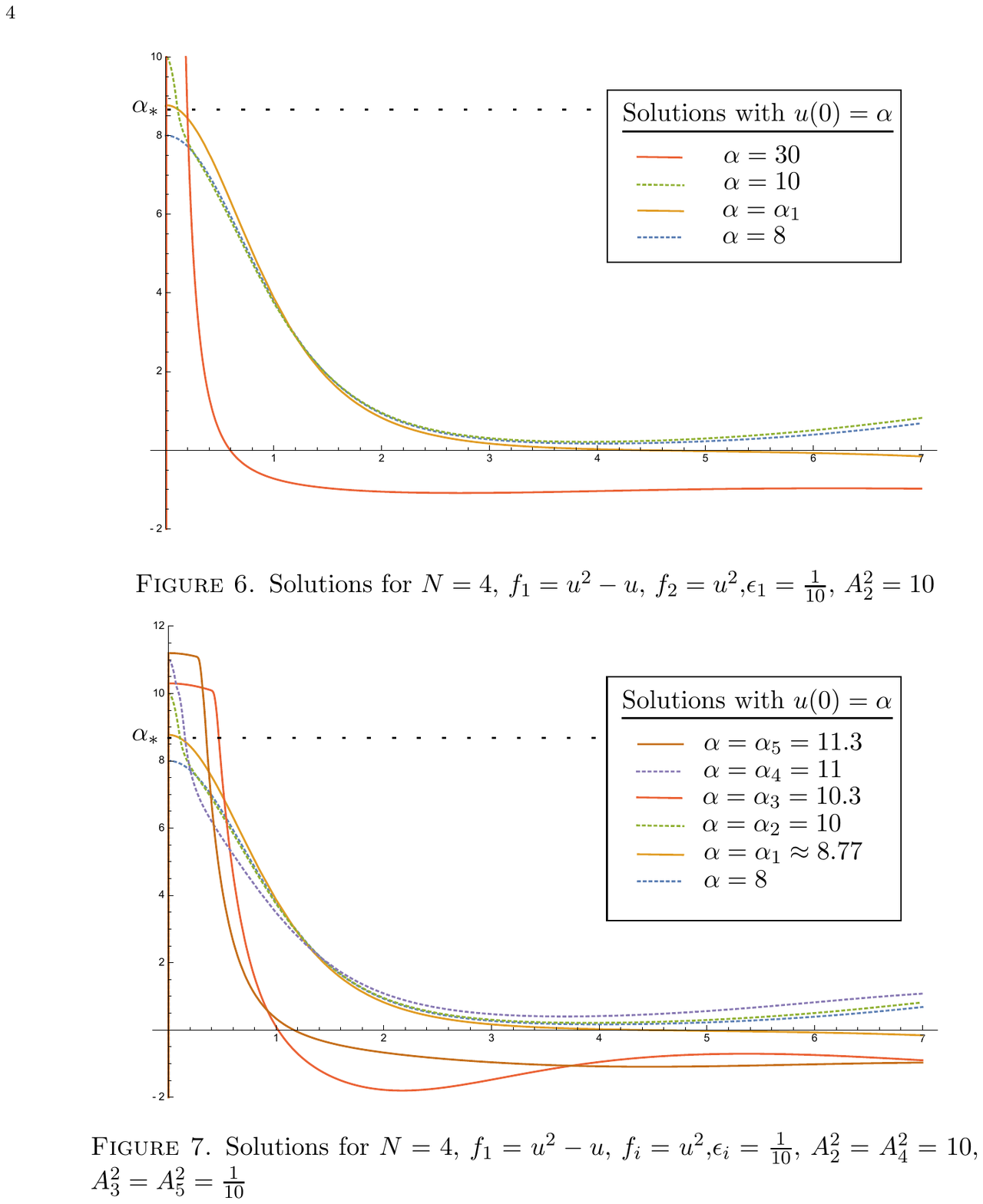}
  \caption{Solutions of Example $4$.}\label{EjA5}
\end{figure}

\section{Appendix}\label{secA}

Let  $u(r)$ be a solution of
\begin{eqnarray}\label{ap}
\begin{gathered}
u''+\frac{N-1}{r}u'+f(u)=0,\quad r>0,\quad N> 2,\\
u(0)=\alpha,\quad u'(0)=0
\end{gathered}
\end{eqnarray}
and let $(H_1)$,$(H_2 )$, $(H_3 )$ and $(H_4)$ be as in the Introduction, with $f$ instead of $f_1$ and $F(s)$ instead of $F_1(s)$.

There are some known facts about the solutions of this equation that we have not found proven with our conditions, so, for the sake of completeness, we give  a proof of them in this appendix.  The main one is the following theorem.

\begin{theorem}\label{unic}
 If  assumptions  $(H_1)$,$(H_2 )$ and $(H_4)$ are satisfied then
the ground state is unique and
${\mathcal P}$=$(b,\alpha_*)$ and ${\mathcal N}$ =$(\alpha_*, \infty)$.

\end{theorem}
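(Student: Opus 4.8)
The plan is to prove Theorem~\ref{unic} by combining a uniqueness argument for the ground state with a monotonicity-in-$\alpha$ separation of the sets $\mathcal P$ and $\mathcal N$. The functional $P$ of Erbe and Tang is the central tool, since $(H_2)$ forces $P_s(s)=\bigl(N-2-2N(F/f)'(s)\bigr)r^{N-1}/r'<0$ for $s>\beta$, so $P(\cdot,u)$ is strictly decreasing along each solution past the level $\beta$. First I would establish the elementary dichotomy: for any $\alpha\in(b,\infty)$ the solution $u(\cdot,\alpha)$ is strictly decreasing as long as it is positive (this is where $(H_1)$ enters, guaranteeing $f(u)>0$ once $u>b$ and hence $(r^{N-1}u')'=-r^{N-1}f(u)<0$), so $R(\alpha)$ is well defined and exactly one of the three mutually exclusive cases $\alpha\in\mathcal P$, $\alpha\in\mathcal N$, $\alpha\in\mathcal G$ occurs. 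By Proposition~\ref{open-sets}(i) both $\mathcal N$ and $\mathcal P$ are open, so $\mathcal G$ is the common boundary.

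The heart of the argument is a \emph{separation} statement: if $\alpha_1<\alpha_2$ both lie in $(b,\infty)$ and $\alpha_1\in\mathcal N$, then $\alpha_2\in\mathcal N$ as well; equivalently $\mathcal P$ is an interval of the form $(b,\alpha_*)$ and $\mathcal N=(\alpha_*,\infty)$ with the single crossing value $\alpha_*\in\mathcal G$. To prove this I would take two solutions $u_1=u(\cdot,\alpha_1)$, $u_2=u(\cdot,\alpha_2)$ and invoke Proposition~\ref{P}: they intersect exactly once, at a height above $b$, with $u_2$ lying below $u_1$ after the crossing. At the crossing level $s_I$ the faster-descending solution $u_2$ has the more negative slope, so I would feed this into Proposition~\ref{final1} (taking $u_1$ to be the \emph{inner}, positive solution and $u_2$ the one with the larger initial value). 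The hypotheses of that proposition—either $s_I\le\beta$, or $s_I>\beta$ together with the ordering $P_2(s_I)<P_1(s_I)<0$ of the $P$-values—are arranged precisely by the monotonicity of $P$ coming from $(H_2)$ and the normalization $P(\alpha,u)=0$ at $r=0$. The conclusion that $u_1$ develops a horizontal tangent before reaching $s_I$ while staying above $u_2$ then propagates the ``crosses zero with negative slope'' property outward, so that $\alpha_1\in\mathcal N$ forces $\alpha_2\in\mathcal N$.

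With the separation in hand, the proof closes quickly. Since $\mathcal N$ is open and upward-closed and $\mathcal P$ is open, and by $(H_4)$ the value $\alpha_*$ produces a genuine ground state (so $\alpha_*\in\mathcal G$ and $\mathcal G\neq\emptyset$), connectedness of $(b,\infty)$ gives $\mathcal P=(b,\alpha_*)$ and $\mathcal N=(\alpha_*,\infty)$, with $\mathcal G=\{\alpha_*\}$ a single point; this is exactly the asserted uniqueness of the ground state. \textbf{The main obstacle} I anticipate is verifying that the sign and ordering conditions on the $P$-functionals required by Proposition~\ref{final1} genuinely hold at the crossing level—in particular ruling out the degenerate possibility $P_1(s_I)\ge 0$ and handling the transition across the threshold $\beta$, where $P_s$ changes sign. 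This requires carefully tracking that $P$ starts at $0$ at $r=0$ and decreases monotonically once the solution passes below $\beta$, and that the relative slopes of $u_1,u_2$ at their unique intersection translate into the strict inequality $P_2(s_I)<P_1(s_I)$; the rest is bookkeeping on open sets.
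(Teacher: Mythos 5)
Your proposal reproduces the paper's closing skeleton (a local dichotomy around each ground state value, then openness of $\mathcal P$ and $\mathcal N$ plus connectedness of $(b,\infty)$), but the engine you propose for the separation step does not run under the stated hypotheses. Theorem \ref{unic} assumes only $(H_1)$, $(H_2)$ and $(H_4)$; the intersection results you invoke---Proposition \ref{P} and the underlying Proposition \ref{SI1}---are proved using the superlinearity hypothesis $(H_3)$, and Proposition \ref{P} is moreover stated only for $b<\alpha_1,\alpha_2<\alpha_*$. The paper is explicit on this point: without $(H_3)$, two solutions with initial data above $b$ need not intersect at all, so your global claim that $\alpha_1\in\mathcal N$ and $\alpha_2>\alpha_1$ force $\alpha_2\in\mathcal N$ is not obtainable by this route. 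The paper instead localizes: Proposition \ref{SI2} yields an intersection only for initial data in a neighborhood $(\alpha_*-\delta,\alpha_*+\delta)$ of a ground state value, exploiting the ground state itself rather than superlinearity, and that local statement is all the final connectedness argument needs.

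Second, the mechanism by which you pass from the intersection to membership in $\mathcal N$ or $\mathcal P$ is missing. The paper's proof hinges on two ingredients you never use: (a) the decay property $\lim_{r\to\infty} r|u_*'(r)|=0$ for the ground state $u_*$, obtained from monotonicity of $r^{2(N-1)}(u_*')^2+2F(u_*)$; and (b) the quantitative conclusion of the appendix Proposition \ref{finalI}, namely $r_1(s)|u_1'(r_1(s))|<r_2(s)|u_2'(r_2(s))|-c$ with a fixed $c>0$ past the last intersection. Taking $u_1=u_*$, (a) and (b) give $r_2(s)|u_2'|\ge c>0$ down to $s=0$, so $\alpha_2\in\mathcal N$ for $\alpha_2\in(\alpha_*,\alpha_*+\delta)$; taking $u_2=u_*$, if $u_1$ reached zero one would get $r_1(0)|u_1'(r_1(0))|<-c<0$, a contradiction, so $\alpha_1\in\mathcal P$ for $\alpha_1\in(\alpha_*-\delta,\alpha_*)$. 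Your substitute---feeding the crossing into Proposition \ref{final1} and ``propagating the crosses-zero-with-negative-slope property outward''---misapplies that proposition: its hypotheses require a level $\bar s$ with $u_2'(r_2(\bar s))=0$, i.e.\ it compares against a solution that turns around ($\mathcal P$-type behavior), and its conclusion (a horizontal tangent for $u_1$) is the tool for proving \emph{positivity} in Lemma \ref{proof}, not sign change. Finally, a small but real slip: under $(H_2)$ one has $P_s>0$ for $s>\beta$, since both factors $N-2-2N(F/f)'(s)$ and $r^{N-1}/r'$ are negative; thus $P$ is increasing in $s$ (equivalently decreasing in $r$), and your stated sign $P_s<0$ is wrong, even though the normalization $P(\alpha)=0$ and the resulting negativity of $P$ below the initial level are used in the paper exactly as you intend.
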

In order to prove this theorem we need the following propositions.

Let $u_1$, $u_2$ be two solutions of \eqref{ap},  such that $u_1(0)=\alpha_1< u_2(0)=\alpha_2$.

\begin{proposition}\label{SI2}
Let f satisfy $(H_1)$-$(H_2 )$ and  let  the solution with initial condition
 $\alpha_*$  be a ground state. Then there exists $\delta >0$ such that if $\alpha_1,\alpha_2 \in (\alpha_*-\delta,\alpha_*+\delta)$,
 then $u_1$ and $u_2$ a point $ u_1(r_I)=u_2(r_I) > 0 $.
\end{proposition}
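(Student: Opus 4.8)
The plan is to reduce the statement, by smooth dependence on the initial height, to a sign-change property of the linearized (variational) solution, and then to extract that sign change from hypothesis $(H_2)$ through the functional $P$ of Section \ref{prel1}. Write $u_*=u(\cdot,\alpha_*)$ for the ground state, assume without loss of generality $\alpha_1<\alpha_2$, and set $u_i=u(\cdot,\alpha_i)$, $w=u_2-u_1$. Since $f$ is $C^1$ on the relevant range, $u(r,\alpha)$ is $C^1$ in $\alpha$, and $\phi(r):=\partial_\alpha u(r,\alpha_*)$ solves
\[
\phi'' + \frac{N-1}{r}\phi' + f'(u_*)\phi = 0, \qquad \phi(0)=1,\ \phi'(0)=0.
\]
By the mean value theorem in $\alpha$ together with continuity of $\partial_\alpha u$, the normalized difference $w/(\alpha_2-\alpha_1)$ converges to $\phi$, together with its first derivative, uniformly on every compact $r$-interval as $\alpha_1,\alpha_2\to\alpha_*$. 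Hence it suffices to exhibit a zero $\bar r$ of $\phi$ at which $u_*(\bar r)>0$; such a zero is automatically simple, since a nontrivial solution of a linear second-order equation cannot vanish together with its derivative. If $\phi>0$ just before $\bar r$ and $\phi<0$ just after, uniform convergence forces $w$ to change sign near $\bar r$ once $\delta$ is small enough, producing $r_I$ with $u_1(r_I)=u_2(r_I)$ and common value close to $u_*(\bar r)>0$, which is exactly the assertion.

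The core step is therefore to prove that $\phi$ must vanish while $u_*>0$. If instead $\phi$ stayed positive on the whole interval where $u_*>0$, then raising the initial height would strictly lift the entire profile, so the solutions just above and just below $\alpha_*$ would remain ordered around $u_*$ and never meet at a positive level; I would rule this out using $(H_2)$. The vehicle is $P(\cdot,u_*)$: from the formula $P_s(s)=\bigl(N-2-2N(F/f)'(s)\bigr)r^{N-1}/r'$, together with $(F/f)'(s)>(N-2)/(2N)$ for $s>\beta$ from $(H_2)$ and the fact that $r'=1/u_*'<0$ along the decreasing ground-state orbit, the functional $P(\cdot,u_*)$ is \emph{strictly increasing} on $(\beta,\alpha_*)$; since $P(\alpha_*,u_*)=0$ at the top of the orbit, this gives $P(\cdot,u_*)<0$ throughout $(\beta,\alpha_*)$. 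Differentiating the Pohozaev/energy boundary quantity of $u_*$ with respect to $\alpha$ yields an integral expression in $\phi$ whose sign is governed by the sign of $P$; a one-signed $\phi$ on $\{u_*>0\}$ is incompatible with this strict negativity, which forces the desired interior zero.

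The only genuine difficulty is this last step, namely producing the sign change of $\phi$ strictly inside $\{u_*>0\}$. The initial data alone only start the bending, since $\phi''(0)=-f'(\alpha_*)/N$, and a priori $f'(u_*)$ may change sign along the orbit, so no elementary Sturm comparison against $u_*'$ suffices; it is precisely $(H_2)$, encoded in the monotonicity and negativity of $P$, that prevents $\phi$ from remaining of one sign. Once the simple zero $\bar r$ with $u_*(\bar r)>0$ is in hand, the convergence argument of the first paragraph delivers, for all $\alpha_1,\alpha_2$ in a sufficiently small $\delta$-neighborhood of $\alpha_*$, an intersection point $r_I$ with $u_1(r_I)=u_2(r_I)>0$, completing the proof.
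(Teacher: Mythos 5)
Your reduction in the first paragraph is sound: by $(H_1)$ the nonlinearity is $C^1$ wherever $u_*>0$, so $\phi=\partial_\alpha u(\cdot,\alpha_*)$ exists, solves the variational equation, every zero of $\phi$ is simple, and uniform convergence of $(u_2-u_1)/(\alpha_2-\alpha_1)$ to $\phi$ on compact $r$-intervals does convert a sign-changing zero $\bar r$ of $\phi$ into an intersection of $u_1,u_2$ at a level close to $u_*(\bar r)>0$. The computation that $P(\cdot,u_*)$ is strictly increasing under $(H_2)$ with $P\to 0$ at the top of the orbit, hence $P<0$ on $(\beta,\alpha_*)$, is also correct and matches what the paper uses elsewhere (Lemma \ref{s_0}).

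The genuine gap is exactly where you locate it and then do not close it: the claim that $\phi$ must vanish at some finite radius. Your stated mechanism --- ``differentiating the Pohozaev/energy boundary quantity of $u_*$ with respect to $\alpha$ yields an integral expression in $\phi$ whose sign is governed by the sign of $P$'' --- is never written down: no identity is derived, no sign of any integrand is checked, and no contradiction with $P<0$ is actually produced. This is not a routine verification that can be waved through. For supercritical nonlinearities (e.g.\ $f(u)=u^q$ with $q$ supercritical, cf.\ Example 3 of the paper) nearby solutions separate strictly and the analogue of $\phi$ remains one-signed, so the vanishing of $\phi$ is precisely the point at which subcriticality $(H_2)$ must enter quantitatively; asserting the incompatibility is assuming the conclusion. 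Note also that the paper itself does not prove Proposition \ref{SI2} but cites \cite{cghy2}, where the intersection of solutions near a ground state is obtained by direct comparison of the \emph{nonlinear} solutions (monotone-separation arguments in the spirit of Erbe--Tang, as in Propositions \ref{Inter} and \ref{finalI}), not by linearization; so even if you completed your plan it would be a genuinely different, nondegeneracy-flavored route, but as written the central step is missing and the proof is incomplete.
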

\begin{proof}
See, for example, \cite{cghy2}.

 \end{proof}

We will use the functional introduced first by Erbe and Tang in \cite{et}, and  this proof uses many of their ideas.

Let $u_1$, $u_2$ be two solutions of \eqref{ap},  such that $u_1(0)=\alpha_1< u_2(0)=\alpha_2$.
For each of these solutions  $u_i$,  let

$$P_i(s)=-2N\frac{F}{f}(s)\frac{r_i^{N-1}(s)}{r_i'(s)}-\frac{r_i^N(s)}{(r_i'(s))^2}
-2r_i^N(s)F(s),\quad $$
where $r_i(s)$ denotes the inverse of $u$ in the interval $(0,R(\alpha))$, and note that

\begin{equation}\frac{d P_i}{d s}(s)=\left(N-2-2N\Bigl(\frac{F}{f}\Bigr)'(s)\right)\frac{r_i^{N-1}(s)}{r_i'(s)}.
\end{equation}

 In what follows  $u_1(r_I)=u_2(r_I)=s_I$ will be the largest point of intersection between  the functions $u_1(r)$ and $u_2(r)$.

\begin{proposition}\label{Inter}
Let $f$ satisfy $(H_1)$ and $(H_2 )$.  Then
\begin{enumerate}
\item[(i)]$\frac{r_2^{N-1}(s)}{r_2'(s)}<\frac{r_1^{N-1}(s)}{r_1'(s)}  $ for $s$ in $[s_I, \alpha_1]$
\item[(ii)]$P_2(s_I)<P_1(s_I)$
\end{enumerate}
\end{proposition}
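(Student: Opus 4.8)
The plan is to analyze the behavior of the ratio $r_i^{N-1}(s)/r_i'(s)$ and its relation to the functional $P_i$ near the largest intersection point $s_I$. First I would set up the local picture at $s_I$: since $u_1(0)=\alpha_1<u_2(0)=\alpha_2$ and $s_I$ is the \emph{largest} intersection point, for $s$ slightly above $s_I$ we must have $r_1(s)<r_2(s)$ (the solution $u_2$, starting higher, reaches any given value $s>s_I$ at a larger radius, since above $s_I$ the graphs no longer cross). Recalling that $r_i'(s)=1/u_i'(r_i(s))<0$ on the decreasing part, the quantity $r_i^{N-1}/r_i'$ is negative, and I would translate the geometric ordering of the radii and slopes at $s_I$ into the claimed inequality (i) at the endpoint $s=s_I$, and separately check it at $s=\alpha_1$ (the initial value of the lower solution, where $r_1'\to\infty$ as $u_1'\to 0$, forcing $r_1^{N-1}/r_1'\to 0$ from below while the corresponding expression for $u_2$ is strictly negative).

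Next, to propagate the inequality (i) across the whole interval $[s_I,\alpha_1]$, I would examine the derivative of the difference $r_1^{N-1}/r_1'-r_2^{N-1}/r_2'$ using the ODE. Differentiating $r_i^{N-1}(s)/r_i'(s)$ with respect to $s$ and substituting the equation satisfied by $u_i$ (rewritten in terms of the inverse function $r_i(s)$) should yield an expression whose sign can be controlled. The key mechanism, as in Erbe--Tang, is that at any hypothetical crossing point $\bar s$ where the two expressions are equal, the derivative of their difference has a definite sign that prevents the crossing from reversing the inequality — a standard barrier/uniqueness-of-crossing argument. This gives (i) throughout $[s_I,\alpha_1]$.

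Finally, for (ii), I would exploit the formula for $dP_i/ds$ stated in the excerpt, namely
$$\frac{dP_i}{ds}(s)=\left(N-2-2N\Bigl(\frac{F}{f}\Bigr)'(s)\right)\frac{r_i^{N-1}(s)}{r_i'(s)}.$$
By hypothesis $(H_2)$, the factor $N-2-2N(F/f)'(s)$ is negative for $s>\beta$. Using (i) to compare the two factors $r_i^{N-1}/r_i'$, and integrating $dP_i/ds$ from $s_I$ down to (or up from) a point where the values of $P_1$ and $P_2$ can be compared — most naturally the common value $\alpha_1$ where $u_1'=0$ makes the leading terms of $P_1$ degenerate to a controllable value — I would accumulate the strict inequality $P_2(s_I)<P_1(s_I)$. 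The main obstacle I anticipate is the careful bookkeeping of signs: the expressions $r_i^{N-1}/r_i'$ are negative, the $(H_2)$ factor is negative, and one must track how these combine under integration, while also handling the boundary behavior as $s\to\alpha_1^-$ where $r_1'\to\pm\infty$ and the definition of $P_1$ must be interpreted as a limit. Getting the direction of every inequality right through these degenerate endpoints, and confirming that the comparison at the anchor point genuinely transfers to $s_I$, is where the real care is needed.
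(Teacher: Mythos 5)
Your proposal is correct and follows essentially the same route as the paper: part (i) is the Serrin--Tang monotonicity argument (the paper differentiates the ratio $\frac{r_1^{N-1}(s)r_2'(s)}{r_2^{N-1}(s)r_1'(s)}$ where you differentiate the difference $\frac{r_1^{N-1}}{r_1'}-\frac{r_2^{N-1}}{r_2'}$, an immaterial variant with the same endpoint values and the same no-crossing barrier, whose sign at a hypothetical crossing indeed forces the contradiction when you propagate downward from $s=\alpha_1$), and part (ii) is exactly the paper's ``use (i) in the expression for $P_s$,'' integrating the displayed formula for $dP_i/ds$ with the sign from $(H_2)$. The only micro-step left implicit in your sketch is the comparison at your anchor: besides $P_1(\alpha_1)=0$ you need $P_2(\alpha_1)<0$, which is immediate because $(H_2)$ together with $r_2^{N-1}/r_2'<0$ makes $P_2$ increasing up to $P_2(\alpha_2)=0$ (the same observation the paper makes for $P(\cdot,v)$ in Lemma \ref{s_0}).
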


\begin{proof}
To prove $(i)$ compute the derivative of $\frac{r_1^{N-1}(s)r_2'(s)}{r_2^{N-1}(s)r_1'(s)} $, see Serrin and Tang, \cite{st}. For $(ii)$ use  $(i)$ in the expression for $P_s$ above.

\end{proof}

\begin{proposition}\label{finalI}

Let f satisfy $(H_1)$ and $(H_2 )$. Let $u_1$, $u_2$ be two solutions of \eqref{ivp},  such that $u_1(r_I)= u_2(r_I)= s_I$,  $u'_1(r_I)= \bar \alpha_1$, $u'_2(r_I)= \bar \alpha_2$ with $ \bar \alpha_2 <\bar \alpha_1<0 $.
If

\noindent i)   $s_I >\beta$  and  $ P_2(s_I)< P_1(s_I)< 0 $ , where $P_i=P(s,u_i)$,\\
or\\
\noindent ii) $s_I \leq \beta$ ,\\
then $r_2(s)<r_1(s)$  and there is a constant $c >0$ such that $ r_1(s)|u'(r_1(s))|< r_2(s) |u'(r_2(s))|-c $ in the interval $ (a, s_I)$  where they are both
defined.

\end{proposition}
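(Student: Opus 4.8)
The plan is to run a comparison between the two solutions through their inverse functions $r_1(s),r_2(s)$ and the Erbe--Tang functional $P$, in the spirit of \cite{et,st}. I work on the set of values $s<s_I$ at which both inverses exist (so both $u_i$ are strictly decreasing), and I follow the coupled quantities $r_1-r_2$, the momenta $m_i(s):=r_i^{N-1}(s)\,|u_i'(r_i(s))|$, and $w_i(s):=r_i(s)\,|u_i'(r_i(s))|$, noting that $w_i=m_i\,r_i^{2-N}$. The target conclusion, $r_2<r_1$ together with $w_1<w_2-c$, amounts to showing that the two profiles, which part with $u_2$ steeper at $s_I$, never re-cross and keep separating below $s_I$.

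First I would record the local picture. Since $r_1(s_I)=r_2(s_I)=r_I$ and $|\bar\alpha_2|>|\bar\alpha_1|$, the steeper solution reaches each value $s$ just below $s_I$ at a smaller radius, so $r_2<r_1$ there, while $w_1(s_I)=r_I|\bar\alpha_1|<r_I|\bar\alpha_2|=w_2(s_I)$ and likewise $m_1(s_I)<m_2(s_I)$. Next I would derive the propagation identities $\dfrac{dm_i}{ds}=-\dfrac{r_i^{2(N-1)}f(s)}{m_i}$ and $\dfrac{dw_i}{ds}=(N-2)-\dfrac{r_i^{2}f(s)}{w_i}$, and recall from the formula for $P_s$ that $(H_2)$ makes $P(\cdot,u_i)$ increasing for $s>\beta$. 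In case (i) the relevant object is $D:=P(\cdot,u_1)-P(\cdot,u_2)$, with $D(s_I)>0$ by hypothesis and $D_s=\bigl(N-2-2N(F/f)'\bigr)(m_2-m_1)$; since $(H_2)$ gives $N-2-2N(F/f)'<0$, this lets the ordering $P_1>P_2$ be transported downward as long as $m_2>m_1$.

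The crux is to exclude a re-crossing. Suppose $s^{*}<s_I$ is the largest value below $s_I$ at which $r_1(s^{*})=r_2(s^{*})$; because $u_2$ lies below $u_1$ for radii just inside, one must have $|u_1'|>|u_2'|$ at $s^{*}$, strictly by uniqueness of solutions to \eqref{ivp}. Evaluating $P_1-P_2$ at such a point, where $r_1=r_2$, reduces it to $(m_1-m_2)\bigl[\,2N F/f-r_1(|u_1'|+|u_2'|)\,\bigr]$; combining the preserved inequality $P_1>P_2$ (which forces this expression positive) with the information $P_i(s^{*})<0$ coming from the monotonicity of $P$ is designed to contradict $|u_1'|>|u_2'|$, so no such $s^{*}$ exists and $r_2<r_1$ on all of $(a,s_I)$. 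The same conclusion in case (ii), $s_I\le\beta$, needs no ordering hypothesis: for $s\le b$ one has $f\le0$ and for $s\le\beta$ one has $F\le0$, so the signs in $\dfrac{dw_i}{ds}$ and in $P_s$ already run in the separating direction, and the non-re-crossing argument closes directly. Once $r_2<r_1$ holds throughout, $m_2>m_1$ and $r_2<r_1$ together give $w_2>w_1$; the uniform gap is then $c:=\inf_{(a,s_I)}(w_2-w_1)$, which is positive because $w_2-w_1$ is continuous and strictly positive on the interval and (using that $u_1>u_2$ pointwise, so $\min u_1\ge\min u_2$) extends positively to the endpoint $a=\min u_1$, where $w_1\to0$ while $w_2$ stays bounded away from $0$.

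I expect the genuine difficulty to be precisely the sign bookkeeping in the third paragraph. The naive gaps are adverse: for $f>0$ both $\dfrac{d(m_2-m_1)}{ds}$ and $\dfrac{d(w_2-w_1)}{ds}$ carry the wrong sign, so the separation of the momenta actually shrinks as $s$ decreases and cannot be controlled by a one-line monotonicity. The decisive positivity has to be squeezed out of the preserved $P$-ordering together with $P_i<0$ at the candidate re-crossing, and making that contradiction airtight — rather than verifying the routine identities above — is where the real work lies.
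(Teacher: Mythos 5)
Your framework (inverse radii, the Erbe--Tang functional, a contradiction at a largest crossing point) matches the paper's, but the two steps you flag as ``the real work'' are precisely where the argument fails, and the paper's proof is engineered differently at exactly those points. First, in case (i) you transport the \emph{unweighted} difference $D=P_1-P_2$ down to a candidate \emph{radius} crossing $s^*$ where $r_1(s^*)=r_2(s^*)$. This cannot work: the transport needs $m_2>m_1$ on all of $(s^*,s_I)$, but your own geometric observation gives $|u_1'|>|u_2'|$ at $s^*$, hence $m_1(s^*)>m_2(s^*)$, so $m_2-m_1$ changes sign strictly before $s^*$ and $D_s$ turns the wrong way; the inequality $P_1>P_2$ at $s^*$ is simply not available. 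Moreover, even granting it, your (correctly computed) reduction $P_1-P_2=(m_1-m_2)\bigl[2NF/f-r_1(|u_1'|+|u_2'|)\bigr]$ combined with $P_2(s^*)<0$ yields only $|u_1'|\,|u_2'|<2F(s^*)$, which is no contradiction (near the top of a solution $|u'|$ is small and $F>0$). The paper avoids both defects at once: the contradiction point $s_0$ is the largest point where $r_1/r_1'=r_2/r_2'$, i.e.\ where $r_1|u_1'|=r_2|u_2'|$ --- such a crossing must occur at or before any radius crossing, so $r_2<r_1$ still holds on $[s_0,s_I)$ --- and the transported quantity is the \emph{weighted} difference $AP_1-P_2$ with $A=(r_2/r_1)^{N-2}(s_0)<1$. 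Monotonicity of the ratio $\frac{r_2^{N-1}r_1'}{r_1^{N-1}r_2'}$ together with the hypothesis $P_1(s_I)<0$ gives $(AP_1-P_2)(s_I)>0$ and $\frac{d}{ds}(AP_1-P_2)\le0$, while direct evaluation at $s_0$ gives $(AP_1-P_2)(s_0)=-2F(s_0)\,r_2^{N-2}(r_1^2-r_2^2)<0$ because $s_0>\beta$ forces $F(s_0)>0$. The weight $A$ and the choice of a $w$-crossing rather than an $r$-crossing are the missing ideas; this is why hypothesis (i) includes $P_1(s_I)<0$, which your scheme never uses.

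Second, your treatment of $s\le\beta$ is wrong, and internally inconsistent with your own last paragraph: on $(b,\beta)$ one has $f>0$ (with $F<0$), so by your computation $\frac{d}{ds}(w_2-w_1)=f\bigl(\frac{r_1^2}{w_1}-\frac{r_2^2}{w_2}\bigr)>0$ there, i.e.\ the $w$-gap shrinks as $s$ decreases --- the signs do \emph{not} ``run in the separating direction,'' so case (ii) does not close directly, and case (i) is left with no continuation below $\beta$ at all (where $(H_2)$ and the $P$-monotonicity are unavailable). The paper bridges this region with a second functional you never introduce: $W_i(s)=r_i(s)\sqrt{(u_i'(r_i(s)))^2+2F(s)}$ from \cite{fls,st}. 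For $F(s)<0$ the maps $x\mapsto x/\sqrt{x^2+2F}$ and $x\mapsto 1/(x\sqrt{x^2+2F})$ are decreasing in $x>0$, so $W_2-W_1$ grows as $s$ decreases while $|u_1'|\le|u_2'|$, and a first slope crossing $|u_1'(r_1(s_1))|=|u_2'(r_2(s_1))|$ is impossible since there $W_1(s_1)<W_2(s_1)$ would force $r_1(s_1)<r_2(s_1)$, contradicting $r_2<r_1$. This mechanism also produces the uniform constant $c$ (the $W$-gap at $\beta$, resp.\ at $s_I$ in case (ii), persists downward, and $W_i$ reduces to $r_i|u_i'|$ where $F$ vanishes), whereas your $c:=\inf(w_2-w_1)$ rests on an unjustified endpoint claim on a possibly open interval. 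In short: the routine identities you verify are fine, but both of the paper's decisive devices --- the $A$-weighted $P$-comparison at a $w$-crossing above $\beta$, and the $W$-functional below $\beta$ --- are absent, and without them the proof does not close.
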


\begin{proof}
We will first study the case $s_I > \beta$ .

First suppose there is  a largest $s_0>\beta $ such that $ r_1(s)>r_2(s )$ for $[s_0, s_I)$ and $\frac{
r_1}{ r_1'}(s_0)=\frac{ r_2}{  r_2'}(s_0)$ . Let
$$A=  \frac{
 r^{N-1}_2 r_1'}{ r^{N-1}_1 r_2'} (s_0)= \frac{ r^{N-2}_2}{ r^{N-2}_1} (s_0)<1.$$
Observe that $$ \left(\frac{ r^{N-1}_2 r_1'}{ r_1^{N-1} r_2'} \right)_s = f(u) \frac{ r_2^{N-1}}{ r_1^{N-1}}\frac{ r_1'(s)}{ r_2'(s)} \big(( r_1')^2-( r_2')^2\big)>0\quad\mbox{in }[s_0,s_I]$$
hence
$$\frac{ r^{N-1}_2 r_1'}{ r_1^{N-1} r_2'}\ge A.$$
As $ P_2(s_I)< P_1(s_I)< 0 $ ,  we have $0<( P_1-\ P_2)(s_I)<(A P_1- P_2)(s_I).$
By direct computation we have
$$\frac{d}{ds}\left(A P_1- P_2\right)=\left(N-2-2N\Bigl(\frac{F}{f}\Bigr)'(s)\right)\left(A\frac{\ r^{N-1}_1}{ \ r_1'}-\frac{ r^{N-1}_2}{  r_1'}\right)\leq 0 $$
in $[s_0,s_I]$ and thus $(A P_1-\bar P_2)(s_0)>0$.
On the other hand,
$$(A P_1- P_2)(s_0)=-2F(s_0) r_2^{N-2}( r_1^2- r_2^2)<0,$$
a contradiction, therefore
$ r_2(\beta)<r_1(\beta )$  and $\frac{r_2}{ r_2'}(\beta)<\frac{ r_1}{  r_1'}(\beta)$.

Now we will use the following functional
$$W_i(s)= r_i(s)\sqrt{(u_i'( r_i(s)))^2+2F(s)},\quad i=1,2.$$
found  in Franchi, Lanconelli  and Serrin,  \cite{fls} or Serrin and Tang  \cite{st}.
Observe that, as $\frac{r_i}{ r_i'}(s) =r_i(s)(u_i'( r_i(s))$, $ W_1(\beta)<W_2(\beta)$ and
\ben
\frac{d}{ds}(W_1-W_2)(s)=(N-2)\left(\frac{|u_1'( r_1(s))|}{\sqrt{(u_1'(r_1(s)))^2+2F(s)}}-\frac{|u_2'(r_2(s))|}{\sqrt{(u_2'(r_2(s)))^2+2F(s)}}\right)\\
\qquad\quad-2F(s)\left(\frac{1}{|u_1'( r_1(s))|\sqrt{(u_1'(r_1(s)))^2+2F(s)}}-\frac{1}{|u_2'( r_2(s))|\sqrt{(u_2'( r_2(s)))^2+2F(s)}}\right),
\een
For $s<\beta$, so that $F(s)<0$,
the functions
$$x\mapsto \frac{x}{\sqrt{x^2+2F(s)}}\quad \mbox{and}\quad  x\mapsto \frac{1}{x\sqrt{x^2+2F(s)}}$$
are decreasing for $x>0$. Hence  we obtain that as long as $|u_1'(r_1(s))|\le|u_2'(r_2(s))|$,  $W_1-W_2$ increases and  also $ r_2(s)< r_1(s)$.   If $|u_1'( r_1(s_1))|=|u_2'( r_2(s_1))|$ for some $s_1\in(\bar s,\beta)$, with $|u_1'(r_1(s))|<|u_2'( r_2(s))|$ in $(s_1,\beta)$, then $W_1(s_1)<W_2(s_1)$ and hence $ r_1(s_1)<r_2(s_1)$, a contradiction.
 So $|u_1'(r_1(s))|\le|u_2'(r_2(s))|$ and $ r_2(s)< r_1(s)$ for all $s$ for all $s\in[\bar s,\beta]$, and the result follows.

For the case  $s_I \leq \beta$
we have  $ r_1(s_I ) =r_2(s_I ) $  and  $|u_1'( r_1(s_I ))|<|u_2'( r_2(s_I ) )|$, so we can repeat the argument with the functional $W$, starting at  $(s_I ) $ instead of at $\beta$, finishing the proof.

\end{proof}

\begin{proof}[Proof of Theorem \ref{unic}]

Let $\alpha_*$ be the initial condition of a ground state, $ u_*(r)$, then $\lim_{r\to\infty} r |u_*'(r)| = 0$. This is because  the function $r^{2(N-1)}(u_*'(r))^2+2F(u(r))$ is positive and decreasing for $F$ negative, so has a finite limit. Dividing by $r^{2(N-2)}$ we obtain the result.

Choose now  $\delta >0$ as in Proposition \ref{SI2}  and  $\alpha_1,\alpha_2 \in (\alpha_*-\delta,\alpha_*+\delta)$. By Proposition \ref{finalI}
 $r_2(s)<r_1(s)$  and there is a constant $c >0$ such that  $r_1(s)|u_1'(r_1(s))|< r_2(s) |u_2'(r_2(s))|-c$  in the interval $ (a, s_I)$.
If $ \alpha_1=\alpha_*$, then  $u_2(r_2(s)) $ is  defined in $ (0, s_I)$  and as  $0=r_1(0)|u_1'(r_1(0))|$ we have  $0<r_2(0)|u_2'(r_2(0))|$.
So $\alpha_2$ is in $\mathcal N$. On the other hand, if  $ \alpha_2=\alpha_*$, and $ \alpha_1 $ is defined in $ (0, s_I)$ then $0 >r_1(0)|u_1'(r_1(0))|$,
a contradiction, so in this case  $\alpha_1$ is in $\mathcal P$.
We have proven that given $\alpha_* \in \mathcal G $ there is $\delta >0$ such that $(\alpha_*-\delta,\alpha_*)\subset\mathcal P$ and $(\alpha_*,\alpha_*+\delta)\subset\mathcal N$.
 As $\mathcal P$ and $\mathcal N$ are open the result follows.

 \end{proof}

If we assume $(H_1)$,$(H_2 )$ and $(H_4 )$  then any two solutions of \eqref{ap} with initial conditions $\alpha_i>b$ do not necessarily intersect.  They  do if   you add the hypothesis  $(H_3)$, i. e. that $f$ is super-linear for $b<s$.

\begin{proposition}\label{SI1}
Let f satisfy $(H_1)$-$(H_3 )$ and $u_1$, $u_2$ be two solutions of \eqref{ap} with initial conditions $\alpha_i>b$,  then the two solutions intersect in a point $u_1(r_I)=u_2(r_I) \geq b$.
\end{proposition}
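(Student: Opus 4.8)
The plan is to exploit the super-linearity hypothesis $(H_3)$ through a Wronskian-type functional, in the same spirit as the comparison computation already carried out in Lemma \ref{vw}. Assume without loss of generality that $\alpha_1<\alpha_2$, so that $u_2>u_1$ for small $r$; since $f>0$ above $b$, both solutions are strictly decreasing while they remain above $b$. I would argue by contradiction, supposing that $u_1$ and $u_2$ never meet at a common value $\ge b$, so that $u_2(r)>u_1(r)>b$ holds on the whole interval on which $u_1$ stays above $b$.

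The central object is
$$W(r)=r^{N-1}\bigl(u_1'(u_2-b)-u_2'(u_1-b)\bigr).$$
At $r=0$ one has $W(0)=0$, because $r^{N-1}$ vanishes and $u_i'(0)=0$. Writing the equation in the form $(r^{N-1}u_i')'=-r^{N-1}f(u_i)$, the cross terms $r^{N-1}u_1'u_2'$ cancel and a direct computation gives
$$W'(r)=r^{N-1}(u_1-b)(u_2-b)\Bigl(\frac{f(u_2)}{u_2-b}-\frac{f(u_1)}{u_1-b}\Bigr).$$
By $(H_3)$ the map $s\mapsto f(s)/(s-b)$ is increasing for $s>b$, so whenever $u_2>u_1>b$ the last parenthesis is positive and hence $W'(r)>0$. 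Consequently $W$ is strictly positive on the interval in question.

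To reach the contradiction I would show that $u_1$ attains the value $b$ at a finite radius $\rho_1$: as long as $u_1$ stayed bounded below by some $b+\eta>b$, the identity $r^{N-1}u_1'(r)=-\int_0^r t^{N-1}f(u_1)\,dt$ would force $u_1'(r)\le -\tfrac{f(b+\eta)}{N}\,r$, driving $u_1$ below $b$ in finite radius; thus $u_1(\rho_1)=b$ for some finite $\rho_1$, and the same identity shows $u_1'(\rho_1)<0$. Under the no-intersection assumption $u_2(\rho_1)>b$, and evaluating the functional at this radius gives
$$W(\rho_1)=\rho_1^{N-1}\,u_1'(\rho_1)\,(u_2(\rho_1)-b)<0,$$
contradicting $W(\rho_1)>0$. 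Therefore the two solutions must cross at some point $r_I$ with common value $u_1(r_I)=u_2(r_I)\ge b$.

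The manipulations involving $W$ are essentially algebraic once $(H_3)$ is in hand, so I expect the main obstacle to be the boundary bookkeeping: justifying rigorously that $u_1$ reaches $b$ at a finite radius (the purely asymptotic case would still yield the contradiction, but via a limiting version of the sign argument as $u_1-b\to 0$), verifying $u_1'(\rho_1)<0$, and confirming that the assumption of no intersection at a value $\ge b$ really forces the strict ordering $u_2>u_1>b$ throughout $(0,\rho_1)$, so that the sign of $W'$ is controlled on the entire interval.
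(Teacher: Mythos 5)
Your proposal is correct and takes essentially the same route as the paper: the paper's proof likewise assumes $u_2>u_1>b$ up to the radius where $u_1$ reaches $b$, forms exactly your functional $W(r)=r^{N-1}\bigl(u_1'(u_2-b)-u_2'(u_1-b)\bigr)$ by cross-multiplying the two equations, uses $(H_3)$ to fix the sign of $W'$, and derives the contradiction by integrating over $[0,r_1(b)]$. Your additional bookkeeping (that $u_1$ reaches $b$ at a finite radius with $u_1'(\rho_1)<0$) merely makes explicit steps the paper leaves implicit.
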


\begin{proof}

Assume $b<\alpha_1<\alpha_2$ and $u_1<u_2$ for all $s\in(b,\alpha_1)$. By multiplying the equations satisfied by $u_1$, $u_2$ by $u_2-b$ and
 $u_1-b$ respectively and subtracting, we obtain
\begin{eqnarray}\label{dif0}
  \frac{d}{dr} \Bigl(r^{n-1}(u_1'(u_2-b)-u_2'(u_1-b))\Bigr)
  = -r^{n-1}(u_1-b)(u_2-b)\left(\frac{f(u_1)}{u_1-b}
    -\frac{f(u_2)}{u_2-b}\right).
\end{eqnarray}
 Since by $(H_3)$ the right hand side of (\ref{dif0}) is nonnegative in
 $[0,r_1(b)]$, by integrating \eqref{dif0} over this interval,
 we obtain a contradiction.

 \end{proof}

We will use this result to obtain  the behaviour of solutions in $\mathcal P$.

 \begin{proposition}\label{P}

Let f satisfy $(H_1)$-$(H_4 )$ and $u_1$, $u_2$ be two solutions of \eqref{ivp},  such that $u_1(0)= \alpha_1$ $u_2(0)= \alpha_2$   $u'_1(0)= u'_2(0)=0$,  with    $ b<\alpha_1 <\alpha_2< \alpha_*$. Then $u_1$ and $u_2$ intersect each other once and only once in $(0,\min\{R(\alpha_1),R(\alpha_2)\})$. Such intersection occurs at a point  $u_1=u_2>b$.

\end{proposition}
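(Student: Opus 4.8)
The plan is to get existence of an intersection from Proposition \ref{SI1} and then to exclude any further intersection by combining the monotonicity of the Erbe--Tang functional $P$ with Proposition \ref{finalI}. First I would invoke Proposition \ref{SI1}: since $f$ satisfies $(H_1)$--$(H_3)$ and $\alpha_1,\alpha_2>b$, the solutions must intersect at a value $\ge b$. To sharpen this to $u_1=u_2>b$, I would reuse the identity \eqref{dif0} from the proof of Proposition \ref{SI1}: if $u_1<u_2$ held throughout $[0,r_1(b)]$, then the strict super-linearity in $(H_3)$ would make the right-hand side of \eqref{dif0} strictly positive on a set of positive measure, so the bracket $r^{N-1}\bigl(u_1'(u_2-b)-u_2'(u_1-b)\bigr)$ would be strictly positive at $r_1(b)$; but $u_1(r_1(b))=b$ forces that bracket to be $\le 0$, a contradiction. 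Hence the first intersection (smallest radius $r_I$, i.e.\ largest intersection value $s_I$) occurs at $s_I>b$, and $u_1<u_2$ on $(0,r_I)$.

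Next I would pin down the sign configuration at $r_I$. Because the turning values of solutions in $\mathcal P$ lie in $(0,b]$, the level $s_I>b$ is reached strictly before either turning point, so $r_I<\min\{R(\alpha_1),R(\alpha_2)\}$ and both solutions are strictly decreasing there: $u_1'(r_I),u_2'(r_I)<0$. Since $u_1$ rises to meet $u_2$ from below, continuity gives $u_1'(r_I)\ge u_2'(r_I)$, and equality is impossible by ODE uniqueness (it would force $u_1\equiv u_2$); thus $\bar\alpha_2:=u_2'(r_I)<\bar\alpha_1:=u_1'(r_I)<0$, which is exactly the hypothesis of Proposition \ref{finalI}.

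To verify the remaining hypotheses of Proposition \ref{finalI} I would split into two cases. If $s_I\le\beta$, case (ii) applies directly. If $s_I>\beta$, I would apply Proposition \ref{Inter}(ii) at the largest intersection point to obtain $P_2(s_I)<P_1(s_I)$, and then argue $P_1(s_I)<0$ as follows: by $(H_2)$ the functional $P(\cdot,u_1)$ is increasing on $\{s>\beta\}$, while the normalization at the top of the solution gives $P(\alpha_1,u_1)=0$ at $r=0$; since $s_I<\alpha_1$, monotonicity yields $P_1(s_I)<0$, hence $P_2(s_I)<P_1(s_I)<0$, which is case (i). In either case Proposition \ref{finalI} gives $r_2(s)<r_1(s)$ for all $s<s_I$ where both inverses are defined, i.e.\ $u_1(r)>u_2(r)$ for every $r>r_I$ up to $\min\{R(\alpha_1),R(\alpha_2)\}$, so there is no intersection at values below $s_I$. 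Maximality of $s_I$ rules out intersections above it, and $u_1<u_2$ on $(0,r_I)$ rules out earlier crossings, so $r_I$ is the unique intersection and it occurs at $s_I>b$.

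I expect the main obstacle to be the bookkeeping in the middle step: correctly fixing the sign configuration $\bar\alpha_2<\bar\alpha_1<0$ and then the two-case verification of the hypotheses of Proposition \ref{finalI}. In particular, deducing $P_1(s_I)<0$ from the normalization $P(\alpha_1,u_1)=0$ together with the $(H_2)$-monotonicity, and making sure the \emph{largest} intersection used in Proposition \ref{Inter} is the very crossing fed into Proposition \ref{finalI}, are the places where care is needed; the existence and the final exclusion are then comparatively routine.
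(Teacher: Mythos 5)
Your proposal is correct and follows essentially the same route as the paper's (sketched) proof: existence of a crossing from Proposition \ref{SI1}, then exclusion of further crossings by checking the hypotheses of Proposition \ref{finalI} via the Erbe--Tang functional, with $P_2(s_I)<P_1(s_I)$ from Proposition \ref{Inter} and $P_1(s_I)<0$ from the $(H_2)$-monotonicity of $P$ together with $P(\alpha_1,u_1)=0$. Your additional details --- sharpening $s_I\ge b$ to $s_I>b$ via the identity \eqref{dif0} and fixing the sign configuration $\bar\alpha_2<\bar\alpha_1<0$ --- are exactly the steps the paper leaves implicit, and they are carried out correctly.
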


\begin{proof}
We will give a sketch of the proof.
From the previous proposition, we have that the solutions intersect at least once with value greater than $b$.
Use now the Erbe-Tang functional $P$ defined above from  $\alpha_i$ to the point of intersection, and observe that the assumptions of  Proposition \ref{final1} are satisfied, hence they do not intersect again.

\end{proof}

\begin{proposition}\label{SI1}
Let f satisfy $(H_1)$-$(H_4 )$ and $u_1$, $u_2$ be two solutions of \eqref{ap}, then the two solutions intersect in a point $u_1(r_I)=u_2(r_I) \geq b$.
\end{proposition}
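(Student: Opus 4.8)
The final statement is Proposition \ref{SI1} (the last one), which asserts that under $(H_1)$--$(H_4)$, any two solutions $u_1$, $u_2$ of \eqref{ap} intersect at a point where $u_1(r_I)=u_2(r_I)\geq b$. Note this is stated without the restriction $\alpha_i < \alpha_*$ that appeared in Proposition \ref{P}, so it is meant to cover solutions with arbitrarily large initial data, including those in $\mathcal N$ that change sign.

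\medskip

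The plan is to reduce this to the already-proven intersection result (the earlier Proposition \ref{SI1} for $(H_1)$--$(H_3)$) by observing that $(H_3)$, the super-linearity of $f/(s-b)$, is the only structural hypothesis actually used in that argument, and it continues to hold here. First I would set up, as in that proof, the assumption $b<\alpha_1<\alpha_2$ and argue by contradiction that $u_1<u_2$ on the common interval where both exceed $b$. The core computation is identical: multiplying the equation for $u_1$ by $(u_2-b)$ and that for $u_2$ by $(u_1-b)$, subtracting, one obtains the Wronskian-type identity \eqref{dif0}, namely
$$\frac{d}{dr}\Bigl(r^{N-1}\bigl(u_1'(u_2-b)-u_2'(u_1-b)\bigr)\Bigr)=-r^{N-1}(u_1-b)(u_2-b)\left(\frac{f(u_1)}{u_1-b}-\frac{f(u_2)}{u_2-b}\right).$$
By $(H_3)$ the right-hand side is nonpositive whenever $b<u_1<u_2$, so the bracketed boundary quantity is monotone, and integrating from $0$ to the first radius $r_1(b)$ where $u_1$ reaches $b$ yields a sign contradiction exactly as before. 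The difference from the earlier statement is purely that we now allow $\alpha_2$ (and possibly $\alpha_1$) to lie in $\mathcal N$, but since the whole argument takes place on the interval where the solutions remain above $b$, before any sign change can occur, the larger initial data cause no difficulty.

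\medskip

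The one point requiring a little care is confirming that the relevant radius $r_1(b)$ is finite and that both solutions stay defined and above $b$ on $[0,r_1(b)]$, so that the integration is legitimate. For $\alpha_i\in\mathcal N$ this follows because such solutions strictly decrease while positive until they cross $b$ (by the definition of $R(\alpha)$ and the preliminary properties in Section \ref{prel1}), so $r_1(b)<R(\alpha_1)$ is well defined; on this interval $u_2>u_1>b$ under the contradiction hypothesis, keeping the integrand controlled. I would therefore only add a sentence recording that the argument is carried out on $[0,r_1(b)]$, where positivity above $b$ is guaranteed, and that $(H_4)$ is not even needed beyond ensuring the setup of \eqref{ap}.

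\medskip

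The main obstacle, such as it is, is not analytic but bookkeeping: making sure the conclusion $u_1=u_2\geq b$ is correctly located, since with large initial data the first intersection could in principle occur exactly at the level $b$ rather than strictly above it, which is why the statement is phrased with $\geq b$ rather than $>b$. Beyond that, the proof is essentially a verbatim repetition of the earlier $(H_1)$--$(H_3)$ intersection lemma, so I would present it as an immediate consequence, noting that $(H_4)$ adds nothing to the mechanism and the hypotheses $(H_1)$--$(H_4)$ merely subsume the earlier ones.
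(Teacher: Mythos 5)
Your proposal matches the paper's own proof essentially verbatim: the paper establishes this proposition by the same device of multiplying the equations by $(u_2-b)$ and $(u_1-b)$, subtracting to get the Wronskian-type identity, and integrating over $[0,r_1(b)]$ to reach a sign contradiction, and you are right that the operative hypothesis is the super-linearity $(H_3)$ (the paper's citation of $(H_4)$ at that step is a typo). One small slip in your write-up: with $b<u_1<u_2$, condition $(H_3)$ makes the right-hand side of the identity \emph{nonnegative}, not nonpositive, which is precisely what contradicts the nonpositive boundary term $r_1(b)^{N-1}u_1'(r_1(b))\bigl(u_2(r_1(b))-b\bigr)$ obtained at the level $u_1=b$.
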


\begin{proof}

Assume $\alpha_1<\alpha_2$ and $u_2>u_1$ for all $s\in(b,\alpha_1)$. By multiplying the equations satisfied by $u_1$, $u_2$ by $u_2-b$ and
 $u_1-b$ respectively and subtracting, we obtain

 \begin{equation}\label{dif}
  \frac{d}{dr} \Bigl(r^{n-1}(u_1'(u_2-b)-u_2'(u_1-b))\Bigr)
  = -r^{n-1}(u_1-b)(u_2-b)\left(\frac{f(u_1)}{u_1-b}
    -\frac{f(u_2)}{u_2-b}\right).
 \end{equation}
 Since by $(H_4)$ the right hand side of \eqref{dif} is nonnegative in
 $[0,r_1(b)]$, by integrating \eqref{dif} over this interval,
 we obtain a contradiction.

 \end{proof}

On the other hand, if we do not assume $ H_3 $, we have the following result.

 \begin{proposition}\label{P}

Let f satisfy $(H_1)$-$(H_4 )$ and $u_1$, $u_2$ be two solutions of \eqref{ivp},  such that $u_1(0)= \alpha_1$ $u_2(0)= \alpha_2$   $u'_1(0)= u'_2(0)=0$,  with    $ b<\alpha_1,  \alpha_2< \alpha_*$. Then $u_1$ and $u_2$ intersect each other once and only once in $(0,\min\{R(\alpha_1),R(\alpha_2)\})$. Such intersection occurs at a point  $u_1=u_2>b$.

\end{proposition}

\begin{proof}
We will give a sketch of the proof.

Property $(H_3)$ implies that $f_1$ is superlinear for $s>b$, hence there is at least one intersection. Indeed, assume $\alpha_1<\alpha_2$ and $u_2>u_1$ for all $s\in(b,\alpha_1$. By multiplying the equations satisfied by $u_1$, $u_2$ by $u_2-b$ and
 $u_1-b$ respectively and subtracting, we obtain
 \begin{equation}\label{dif}
 \frac{d}{dr} \Bigl(r^{n-1}(u_1'(u_2-b)-u_2'(u_1-b))\Bigr)
  = -r^{n-1}(u_1-b)(u_2-b)\left(\frac{f(u_1)}{u_1-b}
    -\frac{f(u_2)}{u_2-b}\right).
 \end{equation}
 Since by $(H_4)$ the right hand side of \eqref{dif} is nonnegative in
 $[0,r_1(b)]$, by integrating \eqref{dif} over this interval,
 we obtain a contradiction.

Use the Erbe-Tang functional $P$ defined above from  $\alpha_i$ to the point of intersection, and observe that the assumptions of  Proposition \ref{final1} are satisfied, hence they do not intersect again.

\end{proof}

%\newpage


\begin{thebibliography}{AAAAA}
	
\bibitem[AT1]{at1} {\sc Adachi, S. Tanaka, K.}, Four positive solutions for the semilinear elliptic equation:  $-\Delta u+u=a(x)u^p+f(x)$ in $R^N$. {\em Calc. Var. Partial Differential Equations} {\bf 11 } (2000), no. 1, 63--95.
\bibitem[AT2]{at2}  {\sc Adachi, S. Tanaka, K.}, Existence of positive solutions for a class of nonhomogeneous elliptic equations in $R^N$. {\em Nonlinear Anal. Ser. A: Theory Methods,} {\bf 48 }(2002), no. 5,  685--705.

\bibitem[AW]{aw}  {\sc  Ao, W.,    Wei, J.}, Infinitely many positive solutions for nonlinear equations with non-symmetric potential,  {\em Calc. Var. Partial Differential Equations} {\bf 51 }(2014), no. 3--4, 761--798.

\bibitem[CZ]{cao}  {\sc Cao, D-M., Zhou, H.,} Multiple positive solutions of nonhomogeneous semilinear elliptic equations in $R^N$. {\em Proc. Roy. Soc. Edinburgh Sect. A} {\bf 126 }(1996), no. 2, 443--463.
\bibitem[CK]{cku} {\sc Castro, A., Kurepa, A., }	Infinitely many radially symmetric solutions to a superlinear Dirichlet problem in a ball, {\em Proc. Amer. Math. Soc.}, {\bf 101 }(1987), 57--64.
\bibitem[CM]{cm19}{\sc Cerami, G., Molle, R.}, Infinitely many positive standing waves for Schrödinger equations with competing coefficients. {\em Comm. Partial Differential Equations} {\bf 44 }(2019), no. 2, 73--109.
\bibitem[CPS]{cps1} {\sc Cerami, G., Passaseo, D., Solimini, S.}, Infinitely many positive solutions to some scalar field equations with nonsymmetric coefficients. Comm. Pure Appl. Math. 66 (2013), no. 3, 372--413.

\bibitem[CMP]{cmp}{\sc Cerami, G., Molle, R., Passaseo, D.}, Multiplicity of positive and nodal solutions for scalar field equations.
{\em J. Differential Equations} {\bf 257 }(2014), no. 10, 3554--3606.
\bibitem[CEF1]{cfe1}{\sc Cort\'azar, C., Felmer, P., Elgueta, M., }On a semilinear elliptic problem in $\RR^N$ with a
	non Lipschitzian nonlinearity, {\em Advances in Differential Equations} {\bf 1 }(1996), 199-218.

	
\bibitem[CEF2]{cfe2}{\sc Cort\'azar, C., Felmer, P., Elgueta, M., }Uniqueness of positive  solutions of $\Delta u+f(u)=0$
	in $\RR^N$, $N\ge 3$, {\em Archive Rat. Mech. Anal.} {\bf 142 }(1998), 127-141.
	

	
\bibitem[CGHY]{cghy2}{\sc Cort\'azar, C., Garc\'\i a-Huidobro, M., Yarur, C. }On the uniqueness of sign changing bound state solutions of a semilinear equation. {\em Ann. Inst. H. Poincar\'e Anal. Non Lin\'eaire} {\bf 28 }(2011), no. 4, 599--621.
	
\bibitem[CGHH]{cghh2}{\sc Cort\'azar, C., Garc\'\i a-Huidobro, M., Herreros, P. } Multiplicity results for sign changing bound state solutions of a semilinear equation. {\em J. Differential Equations} {\bf 259 }(2015), no. 12, 7108--7134.
\bibitem[DDG]{ddg}{\sc D\'avila, J., Del Pino, M., Guerra, I.} Non-uniqueness of positive ground states of non-linear Schr\"odinger equations.  {\em Proc. London Math. Soc}{\bf 106}(2013), no. 3, 318--344.


\bibitem[DWY]{dwy}  {\sc Del Pino, M., Wei,  J.,   Yao, W.} Intermediate reduction method and infinitely many positive solutions of nonlinear Schrödinger equations ,  {\em Calc. Var. Partial Differential Equations} {\bf 53 }(2015), no. 1--2, 473--523. 	

\bibitem[ET]{et} {\sc  Erbe, L.,   Tang, M., } Uniqueness theorems for
positive solutions of quasilinear elliptic equations in a ball, {\it
	J. Diff. Equations} {\bf 138} (1997), 351-379.

\bibitem[FG]{fg}{\sc Ferrero, A., Gazzola, F.} On  subcriticality assumptions for the existence of ground states of quasilinear elliptic equations, {\em Advances in Diff. Equat.}, {\bf 8 }(2003), no 9, 1081--1106.
	
	
\bibitem[FLS]{fls}{\sc Franchi, B., Lanconelli, E., Serrin, J., }Existence and Uniqueness of nonnegative
	solutions of quasilinear equations in $\RR^n$, {\em Advances in mathematics} {\bf 118 }(1996), 177-243.
	
\bibitem[GST]{gst}{\sc Gazzola, F., Serrin, J. and Tang, M., }Existence of ground states and free boundary value problems for quasilinear elliptic operators. {\em Advances in Diff. Equat.} {\bf 5 }(2000), no. 1-3, 1-30.
	
\bibitem[HL]{hl}  {\sc Hsu, T., Lin, H.,} Four positive solutions of semilinear elliptic equations involving concave and convex nonlinearities in $R^N$. {\em J. Math. Anal. Appl.} {\bf 365 }(2010), no. 2, 758--775.
	
\bibitem[MN1]{miya-naito1}  {\sc Miyamoto, Yasuhito; Naito, Yūki}. Singular extremal solutions for supercritical elliptic equations in a ball. {\em J. Differential Equations} {\bf 265 }(2018), no. 7, 2842--2885.

\bibitem[MN2]{miya-naito2}{\sc Miyamoto, Yasuhito; Naito, Yūki}. Fundamental properties and asymptotic shapes of the singular and classical radial solutions for supercritical semilinear elliptic equations. {\em NoDEA Nonlinear Differential Equations Appl.} {\bf 27 }(2020), no. 6, Paper No. 52, 25 pp.	
	
\bibitem[McLS]{ms} {\sc  McLeod, K.,  Serrin, J., }
	Uniqueness of positive radial solutions
	of $\Delta u+f(u)=0$ in $\RR^N,$   {\em Arch. Rational Mech. Anal.},
	{\bf 99} (1987), 115-145.
	
\bibitem[MP21]{mp21}{\sc Molle, R.; Passaseo, D.,} Infinitely many positive solutions of nonlinear Schrödinger equations. {\em Calc. Var. Partial Differential Equations} {\bf 60 }(2021), no. 2, Paper No. 79, 35 pp.
	
\bibitem[NS1]{ni-serrin} {\sc Ni, W.-M., Serrin, J.} Nonexistence theorems for singular solutions of quasilinear
partial differential equations. {\em Commun. Pure Appl. Math.} {\bf 39 }(1986), 379--399.	

\bibitem[PeS1]{pel-ser1}{\sc Peletier, L., Serrin, J., }Uniqueness of positive
solutions of quasilinear equations, {\em Archive Rat. Mech. Anal.} {\bf 81 }(1983), 181-197.

\bibitem[PeS2]{pel-ser2}{\sc Peletier, L., Serrin, J., }Uniqueness of nonnegative
solutions of quasilinear equations, {\em J. Diff. Equat.} {\bf 61 }(1986), 380-397.
	
\bibitem[PS]{pu-ser}{\sc Pucci, P.,  R., Serrin, J., }Uniqueness of ground states for quasilinear elliptic operators, {\em
		Indiana Univ. Math. J.} {\bf 47 }(1998), 529-539.
	
\bibitem[ST]{st} {\sc  Serrin, J., and  Tang, M., }
	Uniqueness of ground states for quasilinear elliptic equations,
	{\em Indiana Univ. Math. J.} {\bf 49} (2000),  897-923.

\bibitem[SZ]{serrin-zou}{\sc Serrin, J., Zou, H.} Classification of positive solutions of quasilinear elliptic equations.
{\em Topol. Methods Nonlinear Anal.} {\bf 3 }(1994), 1–25.	

 \bibitem[WW]{wei-wu} {\sc Wei, J.; Wu, Y.} Normalized solutions for Schrödinger equations with critical Sobolev exponent and mixed nonlinearities. {\em J. Funct. Anal.} {\bf 283 }(2022), no. 6, Paper No. 109574, 46 pp.

 \bibitem[WY]{wy}  {\sc Wei, J., Yan, S.}, Infinitely many positive solutions for the nonlinear Schr\"odinger equations in $\mathbb R^N$. {\em Calc. Var. Partial Differential Equations} {\bf 37 }(2010), no. 3-4, 423--439.
\end{thebibliography}
\end{document}